\newcommand{\Jp}{\mathcal J}
\newcommand{\gaSix}{\gamma_\star}
\newcommand{\Barrier}{\mathfrak B}
\newcommand{\gau}{\gamma_u}
\newcommand{\ga}{\gamma}
\newcommand{\la}{\lambda}
\newcommand{\al}{\alpha}
\newcommand{\Pg}{\mathring{P}}
\newcommand{\Cg}{\mathring{C}}
\newcommand{\Vg}{\mathring{V}}
\def\bcr{\begin{color}{red}}
\def\bcb{\begin{color}{blue}}
\def\bcc{\begin{color}{violet}}
\def\ec{\end{color}}
\numberwithin{equation}{section}
\definecolor{backcolour}{rgb}{0.95,0.95,0.92}
\lstdefinestyle{mystyle}{
    backgroundcolor=\color{backcolour},   
}
\newtheorem{theorem}{Theorem}[section]
\newtheorem{lemma}[theorem]{Lemma}
\newtheorem{remark}[theorem]{Remark}
\newtheorem{proposition}[theorem]{Proposition}
\title{Converging/diverging self-similar shock waves: from collapse to reflection
}
\author{Juhi Jang\thanks{Department of Mathematics, University of Southern California, Los Angeles, CA 90089, USA, and Korea Institute for Advanced Study, Seoul, Korea.  Email: juhijang@usc.edu.}, \  Jiaqi Liu\thanks{Department of Mathematics, University of Southern California, Los Angeles, CA 90089, USA.  Email: jiaqil@usc.edu.} \ and Matthew Schrecker\thanks{Department of Mathematics, University of Bath, Claverton Down, Bath, UK. Email: m.schrecker@ucl.ac.uk.}}
\date{}
\begin{document}

\maketitle

\abstract{We solve the continuation problem for the non-isentropic Euler equations following the collapse of an imploding shock wave. More precisely, we prove that the self-similar G\"uderley imploding shock solutions for a perfect gas with adiabatic exponent $\gamma\in(1,3]$ admit a self-similar extension consisting of two regions of smooth flow separated by an outgoing 
spherically symmetric shock wave of finite strength. In addition, for $\gamma\in(1,\frac53]$, we show that there is a unique choice of shock wave that gives rise to a globally defined self-similar flow with physical state at the spatial origin.
}

\tableofcontents

\section{Introduction}

The converging/diverging shock wave problem in gas dynamics is a classical one, dating back to the works of G\"uderley, Landau and Stanyukovich \cite{Guderley42,Stanyukovich60}. A spherical shock wave moving radially inwards from infinity converges to the spatial origin. As it converges, the shock strengthens and, at the point of collapse, the strength of the shock becomes infinite. A new, outgoing shock wave of finite strength then forms at the centre of symmetry and propagates outwards to infinity. 
The theory of converging/diverging shock waves 
has attracted a great deal of attention due to its many applications from inertial confinement fusion to the destruction of kidney stones, while the combination of a relatively simple structure and relatively severe singularities has made it a very attractive test problem for numerical codes (see, for example, \cite{Bilbao96,Giron23,Lazarus81,Ponchaut06,Ramsey12}). The expansion part of the problem, in particular, plays an important role in determining ignition and moreover, it 
offers an example of a globally defined flow for the Euler equations, 
which has spurred interest from analysts 
\cite{Courant48,Jenssen18,Jenssen23,Landau87}.
In a recent work, \cite{JLS23}, the authors rigorously constructed
the G\"uderley imploding shock solution up to the blow-up time at which the incoming shock wave meets the centre. 
In this paper, we solve the continuation problem past the blow-up time by providing a rigorous construction of the reflected expanding shock wave and the flow on either side for all positive times. 

The Euler system for a compressible perfect gas flow  in radial or cylindrical symmetry is given by the system of PDEs 
\begin{equation}\label{symmetric equations}
	\begin{aligned}
    \rho_t+\frac{1}{r^m}(r^m\rho u)_r&=0,\\
    (\rho u)_t +\frac{1}{r^m}\big(r^m(\rho u^2)\big)_r + p_r &=0,\\
    \Big[\rho\cfrac{u^2}{2}+\frac{p}{\ga-1} \Big]_t + \frac{1}{r^m}\Big[r^m u\big( \rho\cfrac{u^2}{2} +\frac{\ga p}{\ga-1}\big)\Big]_r &=0,    
\end{aligned}
\end{equation}
where $(t,r)\in \mathbb R \times \mathbb R_+$. The functions $\rho = \rho(t,r)\geq 0$, $u=u(r,t)$, $p(t,r)\geq 0$ denote  the density, radial fluid velocity,  and
 pressure, respectively. Here  $\gamma>1$ denotes the adiabatic exponent for the gas  and $m=1,2$ distinguishes flows with cylindrical or spherical symmetry. It is well known that, defining 
the sound speed by $c= \sqrt{{\gamma p/\rho\,}}$,
we may reformulate the Euler equations, when $\rho>0$, as 
\begin{align}
    \rho_t+(\rho u)_r+\cfrac{m\rho u}{r} &=0,\label{rho_t}\\
     u_t +uu_r+\frac{1}{\gamma\rho}(\rho c^2)_r &=0,
     \label{u_t}\\
     c_t+uc_r+\frac{\gamma-1}{2}c(u_r+\frac{mu}{r}) &=0.\label{c_t}
\end{align}
The G\"uderley solutions are constructed as self-similar solutions of the Euler system. Therefore, we make the following self-similar ansatz. For some $\la>1$, we seek the solution to~\eqref{rho_t}--\eqref{c_t} in the form
\begin{align}
    u(t,r) & = -\frac{r}{\lambda t}V(x) = -\frac{r^{1-\lambda}}{\lambda}\frac{V(x)}{x},\label{ansatz for velocity}\\
    c(t,r) & = -\frac{r}{\lambda t}C(x) = -\frac{r^{1-\lambda}}{\lambda}\frac{C(x)}{x},\label{ansatz for sound speed}\\
    \rho(t,r) & = R(x),\label{ansatz for density}
\end{align}
where the self-similar variable 
\begin{equation}\label{def:x}
x=\frac{t}{r^\la}.
\end{equation}
By substituting \eqref{ansatz for velocity}--\eqref{ansatz for density} to the Euler system  \eqref{rho_t}--\eqref{c_t} and performing algebraic cancellations,  we obtain a system of ODEs for the two principal unknowns, $V(x)$, $C(x)$: 
\begin{equation}\label{ODE system}
	\begin{aligned}
		    \cfrac{d V}{d x} &= -\frac{1}{\lambda x}\frac{G(V(x),C(x);\gamma,z)}{D(V(x),C(x))} \ \ \text{ and } \ \ 
     \cfrac{d C}{d x} = -\frac{1}{\lambda x}\frac{F(V(x),C(x);\gamma,z)}{D(V(x),C(x))},
	\end{aligned}
\end{equation}
where 
\begin{align}
   D(V,C) &=(1+V)^2 - C^2, \label{D(V,C)}\\
   G(V,C;\gamma,z) &= C^2g_1(V)-g_2(V)\label{G(V,C)},\\
   F(V,C;\gamma,z) &= C\big(C^2f_1(V)- f_2(V)\big)\label{F(V,C)},
\end{align}
and 
\begin{equation}
\label{g1g2f1f2}
	\begin{aligned}
	g_1(V) =& \,(m+1)V+2mz,\ \ \ &&g_2(V) = V(1+V)(\lambda+V),\\
	 f_1(V)=&\,1+\frac{mz}{(1+V)},\ \ \ && f_2(V)= a_1(1+V)^2-a_2(1+V)+a_3.
	\end{aligned}
\end{equation}
Here, the constants $z$, $a_i$ are determined from $\ga$, $m$, and $\la$ by 
\begin{equation}\label{a_1234&z}
	\begin{aligned}
	z = \cfrac{\lambda -1 }{m\gamma}, \ \ \ a_1 = 1+\cfrac{m(\gamma-1)}{2}, \ \ \ a_2 =\cfrac{m(\gamma-1)+m \gamma z(\gamma-3)}{2}, \ \ \ a_3 = \cfrac{m \gamma z (\gamma-1)}{2}.
	\end{aligned}
\end{equation}
The density $R(x)$ satisfies 
\begin{align}\label{eq: ode of R}
	\frac{1}{R}\frac{dR}{dx} = \frac{m+1}{\la x} \frac{V}{1+V} - \frac{1}{1+V}\frac{dV}{dx}.
\end{align}
The derivation of the ODE system is standard and we have adopted the notation used by Lazarus \cite{Lazarus81}.

In \cite{JLS23}, the authors constructed a piecewise analytic solution of the ODE system~\eqref{ODE system} for $x\in(-\infty,-1)\cup(-1,0)$ with an entropy-admissible self-similar shock at $x\equiv -1$ for a specific choice $\la=\la_{std}(\ga,m)>1$. This self-similar solution gives an entropy solution of the Euler system~\eqref{symmetric equations} for $t<0$, which extends to give terminal data at $t=0$ of the form
\begin{equation}\label{eq:collapsingterminal}
\rho(0,r)=R(0),\quad u(0,r)=-v_1\frac{r^{1-\la}}{\la},\quad c(0,r)=c_1 \frac{r^{1-\la}}{\la},
\end{equation}
for $R(0)>0$ and 
constants $v_1,c_1>0$ depending on $\ga$ and $m$.  This terminal data for the collapsing shock problem gives the initial data for the subsequent expansion problem for the reflected shock. 

We are now in a position to state our main result.

\begin{theorem}
Let $\ga\in(1,3]$, $m=1,2$, and take $\la=\la_{std}(\ga,m)$. Let $(\rho_0,u_0,c_0)(r)$ given by~\eqref{eq:collapsingterminal} be the terminal data of the collapsing G\"uderley shock solution. Then, for $t>0$, there exists a piecewise smooth entropy-admissible weak solution $(\rho,u,c)$ of~\eqref{symmetric equations} with initial data at $t=0$ given by $(\rho_0,u_0,c_0)$. The solution is self-similar (with similarity exponent $\la_{std}$) and consists of a single shock at $r(t)=\big(\frac{ t}{x_H})^{\frac1\la}$ for some $x_H>0$. On either side of the shock, the density $\rho$ is finite. Moreover, for $\ga\in(1,\frac53]$, this is the unique, physically admissible piecewise smooth solution containing a single shock and connecting to the origin via the admissible connection curve $(V_\infty(C),C)$ defined below (see Section \ref{sec:intro_sub}). 
\end{theorem}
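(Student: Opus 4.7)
My plan is to use the self-similar ansatz~\eqref{ansatz for velocity}--\eqref{ansatz for density} to reduce the PDE problem for $t>0$ entirely to a shooting problem for the ODE system~\eqref{ODE system} on $x>0$. The similarity variable $x=t/r^\la$ then ranges over $(0,\infty)$, with $x=0^+$ corresponding to the initial time surface $t=0$ (along which the terminal data~\eqref{eq:collapsingterminal} fixes $(V(0^+),C(0^+))=(-v_1,c_1)$) and $x=\infty$ corresponding to the spatial origin $r=0$, where physical regularity requires the trajectory to approach the admissible connection curve $(V_\infty(C),C)$. The reflected shock appears as a single jump at some $x=x_H>0$, governed by the Rankine--Hugoniot relations in self-similar form.

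Before constructing any solution, I would carry out a phase plane analysis of~\eqref{ODE system} in $\{x>0\}$: locate the critical points where $G=F=0$, the sonic curve $D(V,C)=0$ where the system is singular, and parametrize the Rankine--Hugoniot curve mapping pre-shock states $(V_+,C_+)$ to entropy-admissible post-shock states $(V_-,C_-)$. The ahead-of-shock trajectory is determined by forward integration from $x=0^+$ with the prescribed initial data, giving a smooth solution $(V_+(x),C_+(x))$ on some maximal interval $(0,x_*)$. For each candidate $x_H\in(0,x_*)$, the Rankine--Hugoniot relations produce a unique admissible post-shock state $(V_-(x_H),C_-(x_H))$ from which I would integrate forward again.

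The heart of the existence proof is to show that there is a value of $x_H$ for which the post-shock trajectory extends all the way to $x=\infty$ and asymptotes to $(V_\infty(C),C)$. Two obstructions must be overcome. First, a generic post-shock trajectory will hit the sonic curve $D=0$ at finite $x$ and blow up unless it passes through a degenerate sonic point where $G$ and $F$ simultaneously vanish; thus $x_H$ must be calibrated so that the integral curve is steered through such an isolated node/saddle of the desingularized vector field, which will require barrier arguments tracking stable/unstable manifolds near the sonic point. Second, beyond the sonic crossing the trajectory must continue to the branch asymptoting to $(V_\infty,C)$ rather than diverging or recrossing $D=0$. I would deduce the existence of a suitable $x_H$ by a continuity/IVT argument: show that weak shocks ($x_H$ small) and strong shocks ($x_H$ large) send the post-shock trajectory to opposite sides of the admissible connection, then invoke continuous dependence on $x_H$.

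For uniqueness in the range $\ga\in(1,\tfrac53]$, I would establish strict monotonicity of the shooting map: as $x_H$ varies, the post-shock trajectory sweeps monotonically across the relevant portion of the phase plane without self-crossing, so the admissible connection is struck at exactly one parameter value. I expect this monotonicity to rest on sign/convexity conditions on the coefficients~\eqref{g1g2f1f2}--\eqref{a_1234&z} that hold for $\ga\le\tfrac53$ but fail for larger $\ga$. The hardest step, in my view, will be step three: rigorously controlling the post-shock trajectory through the sonic barrier, since this requires both quantitative estimates on the local dynamics at the degenerate sonic point and a global argument that the chosen branch continues to the admissible asymptotic regime at $x=\infty$.
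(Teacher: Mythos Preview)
Your plan has the right overall architecture (reduce to the ODE, place one shock, connect to the $V_\infty$ curve), but two concrete pieces are wrong and would derail the argument.

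\textbf{First, the initial condition at $x=0$.} From the ansatz~\eqref{ansatz for velocity}--\eqref{ansatz for sound speed}, the terminal data~\eqref{eq:collapsingterminal} says $\lim_{x\to 0}V(x)/x=v_1$ and $\lim_{x\to 0}C(x)/x=c_1$, so the trajectory passes through the \emph{origin} $(V,C)=(0,0)$ at $x=0$, not through $(-v_1,c_1)$. The origin is a star node for~\eqref{ODE}, and showing that the incoming trajectory from $x<0$ (Theorem~\ref{collapsingthm}) continues analytically through it with a finite nonzero slope---rather than spiralling or leaving with slope $0$ or $-\infty$---is a genuine step. The paper spends all of Section~\ref{sec:origin} on this: a barrier argument to bound the slope away from zero, a formal Taylor expansion with coefficient growth estimates to produce a one-parameter family of analytic solutions through $(0,0)$, and a matching argument to identify the correct family member. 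Your proposal treats ``integrate forward from $x=0^+$'' as routine, but it is not.

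\textbf{Second, and more seriously, you have the sonic geometry backwards.} In the paper, the \emph{pre-shock} (ahead-of-reflected-shock) trajectory lives in $S_U$ where $D>0$, and its maximal smooth extension terminates by running into the lower sonic line $C=-(1+V)$ at some $(V_s,C_s)$ (Section~\ref{sec:maximal}). The shock is placed \emph{before} that, at some $x_H<x_s$, and the Rankine--Hugoniot map $\mathcal J$ jumps the state \emph{across} the sonic line into $S_L$ where $D<0$. The post-shock trajectory is then the unique curve $V_\infty(C)$ from $P_\infty$, which lies entirely in $\{D<0\}$ between the nullclines $V_G$ and $V_F^+$ (Lemma~\ref{P8-P_infity}) and never needs to cross $D=0$ again. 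So there is no ``steering the post-shock trajectory through a degenerate sonic point''; the shock \emph{is} the sonic crossing. The hard analytic work is instead on the pre-shock side: proving that $C_s<\mathring C$ via case-by-case barriers and approximate integration (Section~\ref{sec:maximal}), so that the jump locus $\{\mathcal J(V_{\text{sol}}(C),C)\}$ starts at $\tilde P_1$, ends at $P_s$ below $\mathring P$, and is therefore forced to cross $V_\infty$ (Lemma~\ref{PH}). Uniqueness for $\gamma\in(1,\frac53]$ comes from monotonicity of $C/(1+V_{\text{sol}}(C))$ along the pre-shock curve (Lemma~\ref{1 intersection}) and a sign computation on the jump locus (Lemma~\ref{uniq PH}), not from monotonicity of a shooting map in $x_H$.
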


To the best of  our knowledge, 
 this theorem gives the first rigorous proof of the existence of the continuation of the G\"uderley collapsing shock solution, and therefore completes the existence theory for the G\"uderley solutions. Among the novelties of the result, we are able to control the continuation of the flow through the first blowup time in the region $r>0$ onto its maximal smooth development, as well as the jump locus of the maximal development. This maximal development is pivotal 
 in order to find the admissible jump discontinuity that allows for the global definition of the solution in the interior region around the spatial origin, behind the reflected shock. As a by-product of our analysis, we are able to show that the condition that prevents the further extension of the maximal development is the occurrence  of a second sonic point in the flow, which cannot be passed smoothly, and that the admissible jump discontinuity must occur before reaching such a sonic point. 
 
 The role of the maximal development is also significant in the continuation problems related to other singular solutions of the Euler equations and variants. For example, in the shock development problem, arising after formation of a shock wave from regular initial data, the outgoing shock is placed within the maximal development of the smooth solution of the Euler equations, see \cite{Buckmaster22,Christodoulou16,Abbrescia22}, and it is again the smooth development of the data that must be matched across the shock at an \textit{a priori} unknown location. In the context of implosion singularities for the Einstein-Euler system and the naked singularity problem, the construction of an outgoing null geodesic relies on an understanding of the maximal development of the smooth solution, \cite{GHJ23}, while the existence of expanding relativistic shock waves in \cite{Alexander22} again requires the maximal development.
 
 In recent years, smooth expanding flows have received significant attention from the mathematical community. Beginning with work in the 1990s \cite{Grassin98,Serre97}, global in time flows of outgoing velocity were constructed which give classical solutions of the Euler equations. 
 In \cite{Sideris17}, so-called \emph{affine} flows whose support grows in time were constructed to the vacuum free boundary Euler problem. 
 These flows were shown to be nonlinearly stable in \cite{Hadzic18} 
 (see also \cite{Hadzic21a,Hadzic21b,Hadzic22}). 
 Unlike in these solutions, the flows constructed in this paper have a shock wave with density jump, but also have a mixed sign for the velocity and may, in some cases, admit stagnation points of the flow as well. We mention also that data with signed (outgoing) velocity in the case of radial flows permits $L^\infty$ radial solutions of the isentropic Euler system rather than the usual finite energy solutions, compare \cite{Huang19,Schrecker20}.

\subsection{Collapsing shock solution and continuation problem}\label{sec:intro_sub}

In order to state a precise version of our main result, we require a more detailed understanding of the phase plane for the trajectories of~\eqref{ODE system}. In this section, we gather the key results and definitions from \cite{JLS23} relating to the collapsing shock solution and properties of the phase plane that we require to prove the existence of the expanding continuation solution. The main part of the analysis is focused on the ODE obtained by combining the two equations of~\eqref{ODE system} to yield
\begin{align}
	\cfrac{dC}{dV} = \cfrac{F(V,C;\gamma,z)}{G(V,C;\gamma,z)}.\label{ODE}
\end{align} 
Following \cite{Lazarus81}, one may easily classify the critical points of this ODE. There are, first, the triple points, at which $F=G=D=0$. In the notation of \cite{JLS23,Lazarus81}, these are $P_2=(-1,0)$ and
\begin{align}
		P_6 &= (V_6,C_6) = (\frac{-1+(\gamma-2)z-w}{2}, 1+V_6),\quad && P_7 =(V_7,C_7)=(V_6,-C_6), \label{P6}\\
		P_8 &=(V_8,C_8) = (\frac{-1+(\gamma-2)z+w}{2}, 1+V_8),\quad && 	P_9 =(V_9,C_9) = (V_8,-C_8), \label{P8}
	\end{align}
where
\begin{align}
	w(z) = \sqrt{1-2(\gamma+2)z+(\gamma-2)^2z^2}\label{w(z)}.
\end{align}
Moreover, from \cite[Remark 2.5, Lemma 2.6]{JLS23}, 
\begin{align}\label{ineq: V'6>0, V'8<0}
V_6'(z)>0,  \ \ V_8'(z)<0, \ \  w'(z)<0, \ \  V_6(z)< V_8(z), \ \  w(z)\in(0,1) \ \text{ for }z\in(0,z_M),
\end{align}
and 
\begin{equation}\label{Id: VCzM}
V_6(z_M)=V_8(z_M) = -\frac{\sqrt 2}{\sqrt 2+\sqrt \ga},\ \ \ C_6(z_M)=C_8(z_M) = \frac{\sqrt \ga}{\sqrt 2+\sqrt \ga} \ \text{ for }z_M=\frac{1}{(\sqrt{2}+\sqrt{\ga})^{2}}<\frac15.
\end{equation}
 In addition, there are double roots, where $F=G=0$ at $P_0=(0,0)$, $P_3=(-\la,0)$, and
\begin{equation}\label{P4}
P_4=(V_4,C_4)=\Big(\frac{-2\lambda}{\gamma+1+m(\gamma-1)},\sqrt{\cfrac{V_4(1+V_4)(\lambda+V_4)}{(m+1)V_4+2mz}}\Big),\quad P_5=(V_5,C_5)=(V_4,-C_4).
\end{equation}
A calculation shows that there exists $\ga_g\in(\frac{5}{2},3)$, such that there exists $z_g$ satisfying $V_4(z_g)=V_6(z_g)=V_8(z_g)$ when $\ga=\ga_g$, and for $\ga\in(1,\ga_g)$
\begin{equation}\label{ga<gag}
V_4> V_6  \ \text{ for }z\in(0,z_g),\qquad V_4< V_6 \ \text{ for }z\in(z_g,z_M],
\end{equation}
and, for $\ga\in(\ga_g,3]$,
\begin{equation}\label{ga>gag}
V_4> V_8 \ \text{ for }z\in(0,z_g),\qquad V_4< V_8  \ \text{ for }z\in(z_g,z_M],
\end{equation}
where
 \begin{align}\label{zg}
	z_g =\begin{cases}
		\cfrac{\sqrt{\gamma^2+(\gamma-1)^2}-\gamma}{\gamma(\gamma-1)} \quad &\text{ when } m=1,\\
		\cfrac{\sqrt{(2\gamma^2-\gamma+1)^2 +2\gamma(\gamma-1)[4\gamma(\gamma-1)+\frac{8}{3}]}-(2\gamma^2-\gamma+1)}{\gamma[4\gamma(\gamma-1)+\frac{8}{3}]}\quad &\text{ when } m=2.
	\end{cases}
\end{align}
A detailed discussion of  $\gamma_g$ and $z_g$ is given in  \cite{Lazarus81}.
Finally, there are two critical points with $C=-\infty$. The first is
\begin{equation}\label{Pinfty}
P_\infty=(\overline V_\infty,-\infty)=\Big(-\frac{2mz}{m+1},-\infty\Big),
\end{equation}
and the second is $\hat{P}=(-1,-\infty)$.

In addition, 
we denote the $\gamma$ and $P_6/P_8$ dependent $z$-intervals containing $z_{std}$ by  
\begin{align}
	\mathring{\mathcal{Z}}(\gamma;P_*) = \begin{cases}
		&(z_g(\gamma),z_M(\gamma)] \quad\text{for }\gamma\in(1,2] \text{ at }P_6,\\
		&(\frac{\sqrt{5}-1}{2(1+\sqrt{5}+\gamma)},z_M(\gamma)] \quad\text{for }\gamma\in(\gaSix,1+\sqrt 2] \text{ at }P_8,\\
		&(\frac{\sqrt{33}-3}{6+2\sqrt{33}+4\gamma},z_M(\gamma)] \quad\text{for }\gamma\in(1+\sqrt 2,3] \text{ at }P_8,
	\end{cases}
\end{align}
where $\gaSix\in(\frac{5}{3},1.71)$. Finally, we set 
\begin{equation}\label{Pg}
\begin{aligned}
	\Pg =(\Vg,\Cg)= \begin{cases} &P_5=(V_5,C_5),\quad\text{if }C_9\geq C_5, \\&P_9=(V_9,C_9),\quad\text{if }C_9< C_5.\end{cases}
\end{aligned}
\end{equation}

The results of \cite{JLS23}, concerning the analytic solution to \eqref{ODE} representing the region behind the converging shock wave, can be summarized as follows: 

\begin{theorem}[{\cite{JLS23}}]\label{collapsingthm}

(i) For all $\gamma\in (1, 3]$, there is a monotone decreasing analytic solution to \eqref{ODE} connecting $P_1=(-\frac{2}{\ga+1},\frac{\sqrt{2\ga(\ga-1)}}{\ga+1})$ to the origin. 

(ii) For $\gamma \in (1, \gaSix]$, the solution is unique (in the sense of unique $z$) and it connects $P_1$ to the origin via $P_6$. Moreover, it stays below  $B_1(V)=\sqrt{-V}$ for $V\in(V_6,0)$. The value of $z_{std}\in\mathring{\mathcal{Z}}(\gamma;P_6) $. 

(iii) For $\gamma \in (\gaSix , 2)$, there is at least one solution. If a solution connects through $P_6$, then $z_{std} \in\mathring{\mathcal{Z}}(\gamma;P_6) $, and $z_{std}$ is unique.  If it connects through $P_8$, then $z_{std} \in \mathring{\mathcal{Z}}(\gamma;P_8) $. For these ranges of $\ga$ and $z$, the solution trajectories stay below $B_1(V)=\sqrt{-V}$ for $V\in(V_8,0)$.

(iv) For $\gamma \in [2, 3]$, any such solution must connect through $P_8$ with  $z_{std}\in\mathring{\mathcal{Z}}(\gamma;P_8) $. The solution trajectories stay below $B_{\frac{3}{2}}(V)=\sqrt{-\frac{3}{2}V}$ for $V\in(V_8,0)$. 
\end{theorem}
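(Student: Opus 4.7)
The plan is to construct the expanding self-similar continuation by a shooting argument in the unknown shock location $x_H>0$, while tracking three pieces in the $(V,C)$ phase plane of the autonomous equation~\eqref{ODE}: the pre-shock continuation from $P_0=(0,0)$ into the region $x>0$, the Rankine-Hugoniot image of this curve at $x=x_H$, and the post-shock trajectory that must reach $P_\infty=(\overline V_\infty,-\infty)$ (i.e.\ the spatial origin $r=0$ for $t>0$) along the admissible connection curve $(V_\infty(C),C)$.

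\textbf{Pre-shock side.} The collapsing profile of Theorem~\ref{collapsingthm} terminates at $P_0$ as $x\to 0^-$. I would show that $P_0$ admits a unique continuation into $x>0$ by linearizing~\eqref{ODE system} at $P_0$ (where $F=G=0$ but $D(0,0)=1$) and selecting the outgoing branch dictated by the terminal data~\eqref{eq:collapsingterminal}. Following this branch in forward $x$, the trajectory stays in the supersonic region $\{D>0\}$ until it first meets the sonic locus $(1+V)^2=C^2$ at some $x=x_*\in(0,\infty]$, beyond which no smooth extension is possible (a second sonic point). This identifies $(0,x_*)$ as the compact shooting domain for $x_H$ and realizes the maximal development of the smooth flow past $t=0$.

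\textbf{Shock placement and post-shock side.} For each $x_H\in(0,x_*)$, I would use the self-similar Rankine-Hugoniot conditions (written for the incoming shock at $x=-1$ in~\cite{JLS23} and transcribed here) to compute the post-shock state $(V^+,C^+)(x_H)$, and integrate~\eqref{ODE system} forward in $x$ from this datum, demanding that it approach $P_\infty$ along $(V_\infty(C),C)$ as $x\to\infty$. Existence of an admissible $x_H$ would then follow from an intermediate-value argument: at one endpoint of $(0,x_*)$ the post-shock trajectory falls on one side of $(V_\infty(C),C)$, and at the other endpoint on the other side, so continuity of the shooting map in $x_H$ produces at least one crossing. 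The technical heart of this step, and what I expect to be the main obstacle, is the construction of invariant barrier curves in the $(V,C)$ plane---analogous to $B_1(V)=\sqrt{-V}$ and $B_{3/2}(V)=\sqrt{-\tfrac32V}$ of Theorem~\ref{collapsingthm}---that trap the post-shock trajectory away from the sonic lines $D=0$ and away from the spurious critical points $P_4,\dots,P_9$ until it is close to $P_\infty$. A case analysis across $\ga\in(1,3]$, $m\in\{1,2\}$, and the critical-point topologies~\eqref{ga<gag},~\eqref{ga>gag} recorded above will be required.

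\textbf{Admissibility and uniqueness.} Once $x_H$ is fixed, the Lax entropy condition is verified from the relative position of the pre- and post-shock states with respect to $D=0$, and~\eqref{eq: ode of R} is a linear ODE for $R$ which I would integrate along the bounded trajectory to propagate positivity and finiteness of $R$ across the shock and out to $x=\infty$. For the uniqueness statement on $\ga\in(1,\tfrac53]$, I would exploit the more rigid ordering of trajectories provided by the $B_1$ barrier of Theorem~\ref{collapsingthm}(ii): distinct choices of $x_H$ produce distinct post-shock initial data by strict monotonicity of the Rankine-Hugoniot map (checked from its Jacobian), and two post-shock trajectories issued from distinct initial data cannot cross within $\{D>0\}$; hence their terminal positions as $x\to\infty$ are strictly ordered relative to $(V_\infty(C),C)$, so at most one $x_H$ lands on it. Combined with existence, this yields the unique self-similar continuation stated in the theorem.
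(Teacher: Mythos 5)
There is a genuine gap: your proposal does not address the statement at all. Theorem~\ref{collapsingthm} is about the \emph{converging} (collapsing) shock profile for $x\le 0$, i.e.\ the existence, monotonicity and analyticity of a trajectory of~\eqref{ODE} joining the post-shock sonic point $P_1=(-\frac{2}{\gamma+1},\frac{\sqrt{2\gamma(\gamma-1)}}{\gamma+1})$ to the origin, the fact that this connection must pass through one of the triple points $P_6$ or $P_8$ (where $F=G=D=0$), the uniqueness of the similarity exponent $z_{std}$ and its localization in $\mathring{\mathcal{Z}}(\gamma;P_*)$, and the upper barrier bounds $B_1(V)=\sqrt{-V}$ (resp.\ $B_{3/2}(V)=\sqrt{-\tfrac32 V}$) on $(V_6,0)$ or $(V_8,0)$. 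What you sketch instead is the \emph{continuation} problem for $x>0$: shooting in the reflected shock location $x_H$, the Rankine--Hugoniot map into $S_L$, and the connection to $P_\infty$ along $(V_\infty(C),C)$. That is (roughly) the content of Theorem~\ref{main thm} of this paper, not of Theorem~\ref{collapsingthm}; none of the assertions you are asked to prove — passage through the triple point, uniqueness in $z$, the barriers $B_1$, $B_{3/2}$, the intervals $\mathring{\mathcal{Z}}(\gamma;P_6)$, $\mathring{\mathcal{Z}}(\gamma;P_8)$ — is touched by your argument.

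For the record, the present paper gives no proof of this statement: it is imported verbatim from \cite{JLS23}, where it is established by a different set of techniques from the ones you invoke — local analytic (Taylor-series) construction of the solution at the sonic/triple points $P_6$, $P_8$ where the right-hand side of~\eqref{ODE} is of $0/0$ type, a shooting/connection argument in the parameter $z$ (not in a shock location) to join that local branch to $P_1$, and barrier arguments in the $(V,C)$-plane yielding the bounds $\sqrt{-V}$ and $\sqrt{-\tfrac32 V}$ together with the monotonicity and the localization of $z_{std}$. If you want to supply a blind proof of Theorem~\ref{collapsingthm}, those are the ingredients you would need; your current proposal, whatever its merits for the reflection problem, leaves the stated theorem entirely unproved.
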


In order to continue the solution through into the region $x>0$, we expect to find an expanding radial shock emanating from the centre of symmetry (that is, we do not look for symmetry-breaking solutions). Moreover, we seek the continuation solution again as a self-similar solution with the same parameters $\ga$ and $\la$. Thus, we search for a solution that is smooth either side of a shock curve given by $x\equiv x_H>0$. In order  to construct such a solution in the region ahead of the expanding shock, we must extend the trajectory given by Theorem~\ref{collapsingthm} for $x>0$. It emerges from this analysis that the solution trajectory extends until it meets the sonic line in the lower half-plane, $C=-(1+V)$ at a point $V>-1$.

To provide a global solution, the outgoing shock will pass the solution over the sonic line with a jump from the region $C>-(1+V)$ into the region $C<-(1+V)$. For the solution behind the reflected shock to be physically admissible, however, we require (see, for example \cite{Jenssen18}) that the trajectory after the jump connects smoothly to the point $P_\infty$ in the $(V,C)$-plane. This point is a saddle point for the ODE, and there is therefore a unique trajectory that connects to it, which we denote $(V_\infty(C),C)$, and which exists on some interval $C\in(-\infty, \Cg)$. 
 A complete solution of the expanding shock problem, therefore, is a solution trajectory extending the solution from Theorem~\ref{collapsingthm} and jumping across the lower sonic line (with the jump satisfying the Rankine-Hugoniot conditions for a shock wave) to the solution trajectory connecting to $P_\infty$, defined as a function of $x$ for $x\in[\mathring{x},\infty)$ for some $\mathring{x}>0$. 

As the flow is assumed to be self-similar with the same similarity parameters on either side of the shock, the Rankine-Hugoniot conditions may be expressed in self-similar form. Let the subscript - and + denote evaluation  ahead of and behind the reflected shock, respectively. The Rankine-Hugoniot conditions and Lax entropy condition, reformulated in the self-similar variables, are 
	\begin{align}
	1+V_+ & = \frac{\gamma-1}{\gamma+1}(1+V_-) + \frac{2C_-^2}{(\gamma+1)(1+V_-)},\nonumber\\
	C_+^2 &= C_-^2 + \frac{\gamma-1}{2}\big((1+V_-)^2-(1+V_+)^2\big),\label{jump condition}\\
	R_+(1+V_+) &= R_-(1+V_-),\nonumber\\
	C_-^2&<(1+V_-)^2.\label{supsoundspeed}
	\end{align}
Given a curve $(V(x),C(x))$ in the $(V,C)$-plane, we introduce the notation for the relevant parts of the sub- and super-sonic regions
\begin{align}\label{Ljump}
	S_U &= \{(V,C) : C\leq 0, \ 1+V>-C\},\\
	S_L& = \{(V,C) : C< 0, \ -\sqrt{\frac{\ga-1}{2\ga}}C\leq 1+V<-C\}.
\end{align}
Evidently, all points located ahead of the reflected shock belong to $S_U$.  We express the Rankine-Hugoniot conditions~\eqref{jump condition}--\eqref{supsoundspeed} by the following map
\begin{equation}\label{Id: Jp}
\begin{aligned}
\Jp: \quad S_U&\to\quad S_L\\
(V,C)&\mapsto (\Jp_1(V,C),\Jp_2(V,C))
\end{aligned}
\end{equation}
where $\Jp_1(V,C)= \frac{\gamma-1}{\gamma+1}(1+V) + \frac{2C^2}{(\gamma+1)(1+V)}-1$ and $\Jp_2(V,C) = -\sqrt{C^2 + \frac{\gamma-1}{2}\big((1+V)^2-(1+\Jp_1(V,C))^2\big)}$. 
$\Jp$ is well-defined and bijective, as shown in Lemma \ref{1-1 jump point}. We refer $\Jp(V(C),C)$ to be the jump locus of $\{(V(C),C) : C\in(a,b)\}\subsetneq S_U$ where $a<b<0$.

\begin{theorem}\label{main thm}
Let $\ga\in(1,3]$, $z=z_{std}$, and suppose that $(V(x),C(x))$ is the solution of Theorem~\ref{collapsingthm} for $x\le 0$. Then
\begin{itemize}
\item[(i)] There exists $x_H>0$ such that  $(V(x),C(x))$ extends smoothly for $x\in(0,x_H]$ as a solution of \eqref{ODE system} and there exists  {$C_H\in(-\infty,\Cg)$ such that $ \Jp(V(x_H),C(x_H))=(V_\infty(C_H),C_H)$. 
At $x_H$, the solution jumps to $(V_\infty(C_H),C_H)$ with an admissible shock and extends as a global in $x$ solution along the trajectory $(V_\infty(C),C)$.}
\item[(ii)] For $\ga\in(1,\gaSix)$, the solution is unique in the following sense. There exists a maximal smooth extension of the trajectory onto $x\in(0,x_s)$. There is a unique intersection point $P_H$ of the jump locus of the maximal solution curve with the curve $(V_\infty(C),C)$ for $C\in(-\infty,\Cg)$.
\end{itemize} 
\end{theorem}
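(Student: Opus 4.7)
The plan is to split the proof into three tasks: smoothly extending the trajectory of Theorem~\ref{collapsingthm} through $x=0$; locating an admissible jump point whose image lies on the saddle trajectory $(V_\infty(C),C)$; and, for $\ga\in(1,\gaSix)$, establishing uniqueness of that jump point. For the extension through the degenerate critical point $P_0=(0,0)$, the terminal data \eqref{eq:collapsingterminal} forces $V(x)\sim v_1 x$ and $C(x)\sim -c_1 x$ as $x\to 0^-$ with $v_1,c_1>0$. The leading behaviour of \eqref{ODE system} near $P_0$ is $G(V,C)\sim -\la V + 2mz C^2$ and $D\sim 1$, which makes this linear ansatz self-consistent since $-G/(\la x D)\to v_1$, and a Briot--Bouquet analysis identifies a unique analytic branch through $P_0$ extending for $x>0$ into the fourth quadrant $V>0$, $C<0$. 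Physically this is the still-inward-moving unshocked fluid ahead of the reflected shock. I would then track this extension in the phase plane with explicit barriers (analogous to those of \cite{JLS23}, now adapted to the fourth quadrant) and show that the maximal smooth extension terminates exactly when the trajectory meets the lower sonic line $C=-(1+V)$ at a finite $x_s>0$, giving the interval $x\in(0,x_s)$ of statement (ii).

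For existence of the jump point, parametrise the curve from the previous step by $x$ and set
\[
L=\{\Jp(V(x),C(x)) : x\in(0,x_s)\}\subset S_L.
\]
A direct computation gives $\Jp(0,0)=(V_1,-C_1)$, the reflection of $P_1$ across the $V$-axis, which lies on the strong-shock boundary $1+V=\sqrt{(\ga-1)/(2\ga)}|C|$ of $S_L$; and since $\Jp$ restricts to the identity on the sonic line, $L$ is a continuous curve in $S_L$ running from $(V_1,-C_1)$ to the sonic-line endpoint $(V(x_s),C(x_s))$. The saddle trajectory $(V_\infty(C),C)$ runs in $S_L$ from $P_\infty$ at $C=-\infty$ to $\Pg$ at $C=\Cg$. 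Existence of an intersection I would obtain via an intermediate value argument on
\[
\Phi(x)=\Jp_1(V(x),C(x))-V_\infty(\Jp_2(V(x),C(x)))
\]
over the open set of $x$ for which $\Jp_2(V(x),C(x))\in(-\infty,\Cg)$. The required opposite signs at the two endpoints I would extract from: at $x\to 0^+$, the position of $P_1'=(V_1,-C_1)$ relative to $V_\infty$, using the limit $V_\infty(C)\to -2mz/(m+1)$ as $C\to-\infty$ together with barrier estimates on the $V_\infty$-curve; and at the opposite end, the geometry of how $L$ and $V_\infty$ approach (or avoid) the sonic line, distinguishing the cases $\Pg=P_5$ and $\Pg=P_9$ of \eqref{Pg}.

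For uniqueness when $\ga\in(1,\gaSix)$, Theorem~\ref{collapsingthm}(ii) supplies the barrier $B_1(V)=\sqrt{-V}$ for the incoming trajectory, and a symmetric barrier in the fourth quadrant provides sharp control on $L$. From this I would deduce that $\Jp_2(V(x),C(x))$ is strictly monotone in $x$, so that $\Phi$ re-parametrises as a function of $C_+=\Jp_2$, and a transversality argument comparing the tangent of $L$ at any intersection with the vector field of \eqref{ODE} along the $V_\infty$-curve would then show $\Phi$ has at most one zero, forcing uniqueness of $P_H$.

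The main obstacle is the quantitative sign comparison in the existence step. Uniformly across $\ga\in(1,3]$ one must locate the $V_\infty$-curve precisely relative to the endpoints of $L$, and because whether $\Pg=P_5$ (an interior node of the phase portrait) or $\Pg=P_9$ (a triple point on the sonic line) changes the geometry near the sonic-line end, the argument will likely require separate case analyses together with explicit phase-plane barriers in $S_L$ that isolate the saddle trajectory from $(V_1,-C_1)$.
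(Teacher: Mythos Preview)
Your three-step decomposition matches the paper's, but the proposal underestimates where the real work lies and has a structural gap in the existence step.

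\textbf{The missing key estimate.} You treat ``terminates on the lower sonic line at finite $x_s$'' as a routine barrier conclusion, but the paper's entire Section~3 is devoted to proving the sharper statement $C_s<\Cg$ (Proposition~\ref{approach C=-1-V}). This inequality is what makes the existence argument work: without it the sonic-line endpoint $P_s$ of $L$ could land above $\Pg$, where the $V_\infty$-curve does not reach, and your $\Phi$ would not change sign. The proof requires separate barrier functions for $P_*=P_8$, for $P_*=P_6$ with $z$ large, and---hardest---for $P_*=P_6$ with $z$ near $z_g$, where no barrier in the third quadrant works and one must instead bound the slope at the origin via an approximate integration of the ODE in the second quadrant (Lemmas~\ref{upper bound of sol}--\ref{m2 small ga and z}). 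Your sketch gives no mechanism for any of this.

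\textbf{The existence argument.} Your function $\Phi$ is only defined where $\Jp_2(V(x),C(x))<\Cg$, and you have not shown this set is a connected interval containing both limits you want to test; nor is the sign of $\Phi$ near $x\to 0^+$ (i.e.\ the position of $\tilde P_1$ relative to $V_\infty$) readily available. The paper avoids this by a different topological argument (Lemma~\ref{PH}): it notes $G,F<0$ at $\tilde P_1$ while $G,F>0$ at $P_s$ (the latter \emph{because} $C_s<\Cg$), so the jump locus crosses both zero sets $V_G$ and $V_F^+$; since $V_\infty$ is trapped between these (Lemma~\ref{P8-P_infity}), it must be crossed too. This sidesteps the endpoint comparison entirely.

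\textbf{Uniqueness.} Monotonicity of $\Jp_2$ in $x$ is not what the paper establishes and would not by itself exclude multiple crossings. The paper instead proves (Lemma~\ref{1 intersection}) that each ray $C=-\kappa(1+V)$ meets the solution curve exactly once, then runs a two-case contradiction (Lemma~\ref{uniq PH}): one case is handled by this ray-uniqueness, the other by showing $\frac{d}{dC}\Jp_1(V_{\mathrm{sol}},C)<0$ on the relevant region via a pointwise sign analysis of a quartic $\mathcal Q(V,C)$. Your ``transversality'' gesture does not capture this.
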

\begin{figure}
	\centering
	\includegraphics[scale=0.25]{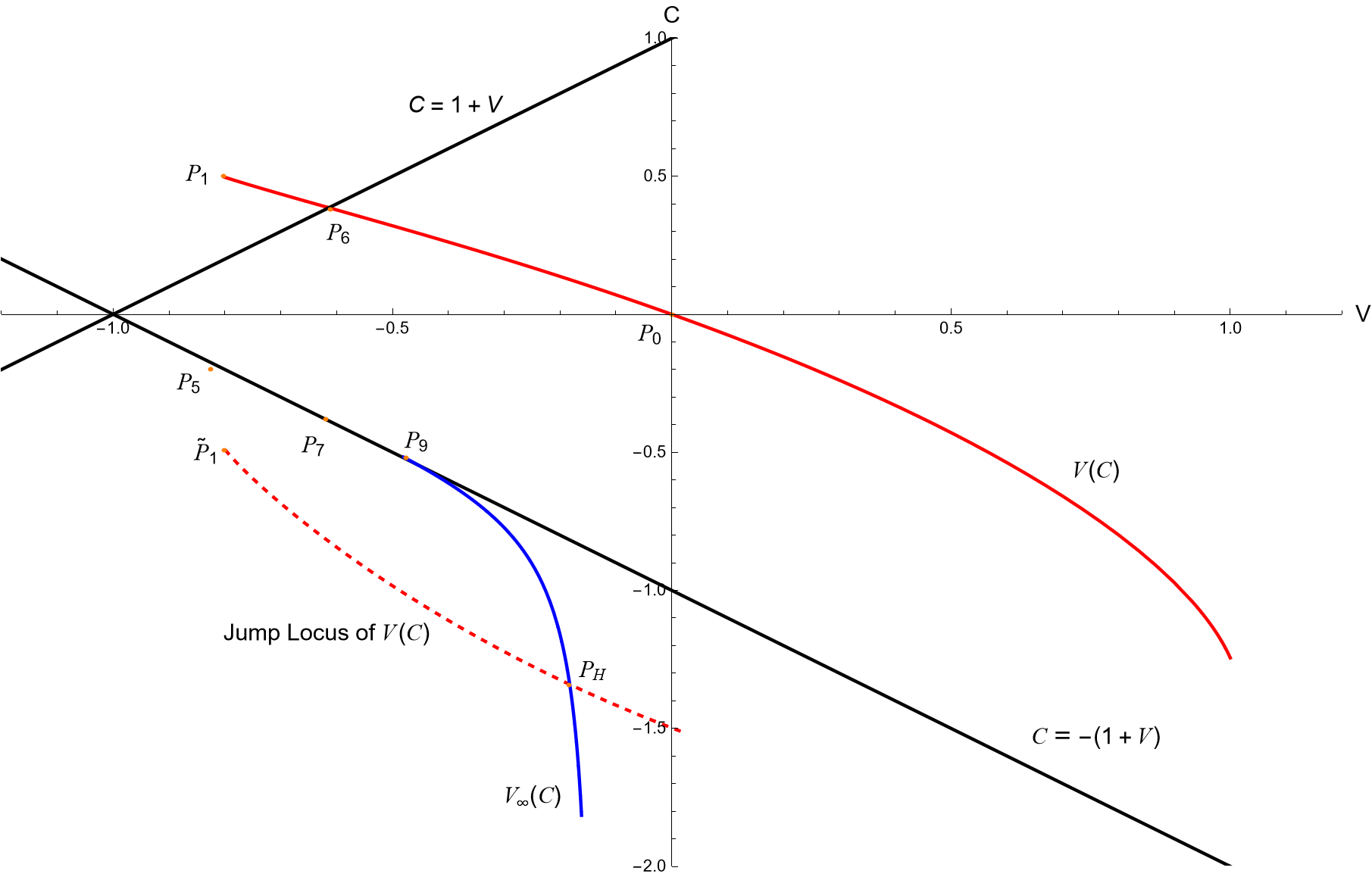}
	 \caption{the solution trajectory when $m=1$, $\gamma = 1.5$ and $z = 0.14$.}
	 \vspace{-7mm}
\end{figure}

{\begin{remark}
Following Lazarus \cite{Lazarus81}, see also \cite{Jenssen18}, it is expected that the flow extended from any other point on the jump locus of the maximal smooth solution fails to extend globally in $x$, and therefore does not give a globally defined solution of the original problem in $(t,r)$ variables. In particular, given any point
 $$(\tilde V,\tilde C)\in \mathcal{J}\big(\{(V(x),C(x))\,|\,x\in(0,x_s)\}\big)\setminus \{(V_\infty(C),C)\,|\,C\in(-\infty,\mathring{C})\},$$  the flow starting from $(\tilde V,\tilde C)$ either converges in finite $x$ to the point $\hat{P}$, at which point the density vanishes with non-physical positive pressure, or a second sonic point is reached, at which the solution loses regularity.
\end{remark}\color{black}}

{\begin{remark}
Given the solution $(V(x),C(x))$ of Theorem~\ref{main thm}, the density profile $R(x)$ is easily recovered  from the ODE \eqref{eq: ode of R}. Since $\lim_{x\to 0^-}\frac{V(x)}{x}=\lim_{x\to 0^-}\frac{dV(x)}{dx} $ is a finite number, as shown in Lemma \ref{L: finite slope in x}, $R(x)$ is well-defined and finite for $x\leq x_H$. From the jump conditions \eqref{jump condition}, we obtain the jump value of $R(x_H)$. For $x>x_H$, as $V_\infty(x)$ is increasing and $\lim_{x\to \infty} V_\infty (x)=\overline V_\infty$, a finite number, we find that $\log R(x)$ remains finite for $x>x_H$ by integrating \eqref{eq: ode of R}, and hence $R(x)$ also remains finite and positive 
for $x>x_H$. As $x\to\infty$, we find that $R(x)\to 0$ with a polynomial rate, determined by $\la$.
\end{remark}}

To prove the main theorem, we proceed in three main steps. First, we require a smooth extension of the solution through the origin in the $(V,C)$-plane. To achieve this, we show that the trajectory $C(V)$ for $x<0$ meets the origin with a finite slope. This enables us both to continue the solution into the fourth quadrant, and also to establish the smoothness of the flow with respect to the $x$ variables. The proof of this property is contained in Section~\ref{sec:origin}. We prove this through a combination of barrier arguments and a precise understanding of the Taylor expansion of any solution through the origin, a star point for the ODE~\eqref{ODE}.

Next, in Section~\ref{sec:maximal}, we establish the maximal extension of the solution trajectory, showing that the solution of~\eqref{ODE} extends smoothly in the $(V,C)$-plane until it meets the sonic line $C=-(1+V)$ at some point. It is essential for the later analysis of the jump locus that we are able to prove that this intersection occurs at a point with $C$ coordinate $C_s<\Cg$. 
The proof is delicate, and requires us to split the range of $\ga$ and $\la$ into several different regions and argue independently on each region. For some ranges of these parameters, we employ barrier arguments, while in others, we undertake an approximate integration of the ODE~\eqref{ODE} to establish precise bounds on the flow trajectory. 

The final steps, in Section~\ref{sec:jump}, are to prove the existence of the unique trajectory out of the point $P_\infty$, and to show that this trajectory meets the jump locus of the maximal extension of the solution from the origin. For this part of the proof, we require an accurate understanding of the Rankine-Hugoniot jump conditions, as well as the properties of the two solution curves.

{At various key points, especially in the barrier arguments, we must determine the sign of certain polynomial functions on given intervals. The Fourier-Budan theorem, a classical result used to determine the number of zeros of a polynomial within a certain range, is employed to prove these sign conditions.}

We conclude this section with some basic properties of $F(V,C)$ and $G(V,C)$ in the regions $S_U\cap\{V\geq V_8\}$ and $S_L\cap\{V\geq V_8\}$, which will play a crucial role in understanding the dynamics of the system \eqref{ODE system}.

\begin{lemma}\label{L:FG=0}
Let $m=1,2$. For any $\ga\in(1,3]$ and $z\in(0,z_M]$. In regions $\mathcal S_U\cap\{V\geq V_8\}$ and  $\mathcal S_L\cap\{V\geq V_8\}$,   the followings hold for functions $F(V,C)$ and $G(V,C)$:
\begin{enumerate}
	\item[(i)] $F(V,C)> 0$ in the region $\mathcal S_U\cap\{V\geq V_8\}$. $F(V,C)=0$ has a unique simple root branch $V=V_F^+(C)$ in the region $\mathcal S_L\cap\{V\geq V_8\}$ with the domain $C\in[-\sqrt\frac{1+m\ga z}{1+m z},C_9)$. Moreover, $V=V_F^+(C)$ connects $P_9$ and the point $(0,-\sqrt\frac{1+m\ga z}{1+m z})$ with $(V_F^+)'(C)<0$ for $C\in[-\sqrt\frac{1+m\ga z}{1+m z},C_9)$, and $\lim_{C\to -\infty}V_F^+(C) = +\infty$.
	\item[(ii)] $G(V,C)=0$ has a unique simple root branch, denoted by $V_G^+(C)$, in $\mathcal S_U\cap\{V\geq V_8\}$. $V_G^+(C)$ satisfies that $V_G^+(0)=0$, $\lim_{C\to -\infty}V_G^+(C) = +\infty$ and $(V_G^+)'(C)<0$ for $C<0$. In $\mathcal S_L\cap\{V\geq V_8\}$, $G(V,C)=0$ has another unique simple root branch, denoted by $V_G(C)$ which satisfies $V_8< V_G(C)$, $\lim_{C\to -\infty}V_G(C) = \bar V_{\infty}$ and $V_G'(C)<0$ for $C< 0$.
\end{enumerate} 
\end{lemma}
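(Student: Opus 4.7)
The plan is to view $\{F=0\}$ and $\{G=0\}$ as algebraic curves in the $(V,C)$-plane, decompose each by its natural factor structure, locate intersections with the sonic lines, and apply the implicit function theorem for smoothness, uniqueness and monotonicity of the branches.

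For $F$, factor $F(V,C)=C\,\phi(V,C)$ with $\phi(V,C):=C^2 f_1(V)-f_2(V)$. Since $f_1(V)=1+\tfrac{mz}{1+V}>0$ throughout our region, $\{F=0\}\cap\{C<0\}=\{C^2=H(V)\}$ with $H(V):=f_2(V)/f_1(V)$. Substituting the definitions of $a_1,a_2,a_3$ yields
\[
f_2(V)-(1+V)^2 f_1(V)=\tfrac{m(\gamma-1)}{2}\bigl[(1+V)^2-(1+(\gamma-2)z)(1+V)+\gamma z\bigr],
\]
whose bracketed quadratic in $1+V$ has discriminant $w(z)^2$ and thus roots $1+V_6$ and $1+V_8$. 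Therefore $H(V)=(1+V)^2$ exactly at $V=V_6,V_8$, and since the leading coefficient is positive, $H(V)>(1+V)^2$ for $V>V_8$. In $\mathcal S_U\cap\{V\ge V_8\}$ this forces $C^2\le(1+V)^2\le H(V)$, so $\phi\le 0$ and $F=C\phi\ge 0$. In $\mathcal S_L\cap\{V\ge V_8\}$ the zero set is the lower branch $C=-\sqrt{H(V)}$; its endpoints are identified directly: $V=V_8$ gives $C=-C_8=C_9$ (the point $P_9$), $V=0$ gives $C=-\sqrt{(1+m\gamma z)/(1+mz)}$, and $V\to\infty$ gives $C\to-\infty$ (as $H(V)\sim a_1 V^2$). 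Monotonicity follows from the implicit function theorem: on the branch $\partial_V\phi=-f_1(V)H'(V)$, so $(V_F^+)'(C)=2C/H'(V)$, negative once we verify $H'>0$ on the branch.

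For $G$, write $G(V,C)=C^2g_1(V)-g_2(V)$; the coefficient $g_1$ vanishes only at $\bar V_\infty=-\tfrac{2mz}{m+1}$. On the sub-region $V\ge 0$ (relevant in $\mathcal S_U\cap\{V\ge V_8\}$), $g_1,g_2\ge 0$ and $C^2=g_2(V)/g_1(V)$ parametrizes the curve; near $P_0=(0,0)$ one has $g_2/g_1\sim\lambda V/(2mz)$, so the branch is tangent to the $C$-axis at $P_0$, and as $V\to\infty$, $g_2/g_1\sim V^2/(m+1)$, so $C\to-\infty$. A direct polynomial computation shows $(g_2/g_1)'(V)>0$ for $V>0$, so $V\mapsto-\sqrt{g_2(V)/g_1(V)}$ is a smooth strictly decreasing bijection $[0,\infty)\to(-\infty,0]$ whose inverse is precisely $V_G^+$. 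On the sub-region $V_8\le V<\bar V_\infty$ (relevant in $\mathcal S_L\cap\{V\ge V_8\}$), both $g_1<0$ and $g_2<0$, and the same relation gives a smooth branch starting at $P_9=(V_8,C_9)$ (a zero of $F,G,D$ by the triple-point property). As $V\to\bar V_\infty^-$, $g_2$ remains bounded while $g_1\to 0^-$, so $|C|\to\infty$, giving $V_G(C)\to\bar V_\infty$ as $C\to-\infty$. Monotonicity $V_G'<0$ follows from the implicit function theorem with $\partial_V G=C^2(m+1)-g_2'(V)$, after a sign check of this expression along the branch.

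The main technical hurdle is establishing the \emph{global} sign of the denominator in each implicit function application, namely $\partial_V\phi\neq 0$ along $V_F^+$ and $\partial_V G\neq 0$ along $V_G^+,V_G$. Local non-vanishing at the endpoints is straightforward from Taylor expansion, but extending this to the whole branch reduces, after substituting the branch equation $C^2=H(V)$ or $C^2=g_2(V)/g_1(V)$, to a polynomial sign condition in $V$ on a specific interval. These conditions can be settled by a Fourier--Budan root count (of the kind invoked elsewhere in the paper) or by direct discriminant analysis. Once the sign conditions are secured, the implicit function theorem upgrades the local, endpoint-level analysis to the stated smooth, monotone, single-valued branches.
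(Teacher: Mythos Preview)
Your approach is essentially the same as the paper's: both write $\{F=0\}$ and $\{G=0\}$ as $C^2=f_2/f_1$ and $C^2=g_2/g_1$, locate the sonic intersections via the factorisation $f_2-(1+V)^2f_1=\tfrac{m(\gamma-1)}{2}(V-V_6)(V-V_8)$, and read off the branches as graphs in one variable. The only organisational difference is that for $G$ the paper treats $G(V,C)=0$ as a cubic in $V$ (three global root branches $V_G^-<V_G<V_G^+$ with explicit values at $C=0$ and asymptotics as $C\to-\infty$), whereas you split into the sub-intervals $V\ge 0$ and $V_8\le V<\bar V_\infty$; both yield the same branches.

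Where you overstate things is the ``main technical hurdle''. The global sign of $\partial_V G$ along $V_G$ (and along $V_G^+$) does \emph{not} require Fourier--Budan or discriminant analysis. The paper's trick is to multiply numerator and denominator of $V_G'(C)=-2Cg_1(V_G)/[(m+1)C^2-g_2'(V_G)]$ by $V_G$ and then substitute the branch relation $(m+1)C^2V_G=g_2(V_G)-2mzC^2$. This collapses the denominator to $-2mzC^2-2V_G^3-(2+m\gamma z)V_G^2=-2mzC^2-V_G^2\bigl(2(1+V_G)+m\gamma z\bigr)$, which is manifestly negative for $V_G\in(-1,0)$; the numerator $-2V_GCg_1(V_G)$ has an obvious sign from $V_G<0$, $C<0$, $g_1(V_G)<0$. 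The same manipulation works verbatim for $V_G^+$. Likewise, for your $H'(V)>0$ along the $F$-branch, the paper simply checks $f_2'(V)>0$ on $[V_8,\infty)$ directly (a linear function bounded below by its value at $V_8(z_M)$), and then $H'=(f_2'f_1-f_2f_1')/f_1^2>0$ since $f_1'<0$ and $f_1,f_2>0$. So the step you flag as delicate is in fact a two-line substitution.
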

\begin{proof}
The proof is given in Appendix \ref{Ap: FG=0}.
\end{proof}

\section{Continuation through the origin}\label{sec:origin}
In this section, we first  establish that the solution $(V(x),C(x))$ passes through $(0,0)$ with finite and nonzero slope, thus passing into the fourth quadrant from the second quadrant in the  $(V,C)$-plane. Our strategy is to use the asymptotic behaviour of $C(V)$ around $V=0$ to obtain our conclusion. We begin by showing $\limsup_{V\to 0^-}\frac{C(V)}{V}\neq 0$. Recall that the solution trajectory is obtained as the solution to the ODE
\begin{align}\label{ivp p*}
		\begin{cases}
			&\cfrac{dC}{dV} = \cfrac{F(V,C;\gamma,z)}{G(V,C;\gamma,z)},\quad V\in[V_*,0),\\
			&C(V_*)= C_*, C'(V_*) = c_1.
		\end{cases}
	\end{align}
	For notational convenience, we define
\begin{align}
	\Gamma(P_*) = \begin{cases}
	(1,2] &\text{ when }P_*=P_6,\\
		(\gaSix,3] &\text{ when }P_*=P_8. 
		\end{cases}
\end{align}
		We further define
\begin{align}
	s(\ga,z;P_*) : = \frac{(\ga-1)C_*(\ga,z)}{4V_*(\ga,z)}\label{s(ga,z)} <0 . 
\end{align}

\begin{proposition}\label{lower bound of v1}
	Let $P_*$ be $P_6$ or $P_8$. For any $\ga\in\Gamma(P_*) $ and $z\in \mathring{\mathcal{Z}}(\ga;P_*)$, the solution $C(V)$ to  \eqref{ivp p*} satisfies  $\limsup_{V\to 0^-}C'(V)\leq s(\gamma,z;P_*)$. Equivalently, as $C(V)$ is strictly monotone, $\liminf_{C\to 0^+}V'(C)\geq \frac{1}{s(\gamma,z;P_*)}$.
\end{proposition}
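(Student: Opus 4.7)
The plan is a comparison argument using the straight line $\ell(V) = sV$ through the origin as a barrier for the trajectory, followed by a Taylor expansion of \eqref{ODE} near the origin to convert the positional bound into a slope bound. First, I would verify that the trajectory starts above the line $\ell$. Since $s = \frac{\gamma-1}{4}\cdot\frac{C_*}{V_*}$ with $\frac{\gamma-1}{4}\in(0,\tfrac12]$ for $\gamma\in(1,3]$ and $\frac{C_*}{V_*}<0$, one has $\ell(V_*) = sV_* = \frac{\gamma-1}{4}C_* < C_*$, so $C(V_*) = C_* > \ell(V_*)$.

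Next, the core of the proof is to show that $\ell$ is a strict subsolution of \eqref{ODE} on $(V_*, 0)$. A direct algebraic reduction, using the identities
\begin{equation*}
Vf_1(V) - g_1(V) = -m\,\frac{V(1+V)+z(V+2)}{1+V}, \qquad g_2(V) - Vf_2(V) = \frac{m(\gamma-1)}{2}\,V^2(\gamma z - 1 - V),
\end{equation*}
together with $\lambda = 1 + m\gamma z$, gives
\begin{equation*}
F(V, sV) - s\,G(V, sV) \;=\; \frac{s\,m\,V^2}{1+V}\!\left[-s^2 A(V) + \frac{\gamma-1}{2}B(V)\right],
\end{equation*}
with $A(V) := V(1+V) + z(V+2)$ and $B(V) := (\gamma z - 1 - V)(1+V)$. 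The prefactor $\frac{smV^2}{1+V}$ is negative on $(V_*, 0)$. To verify the bracket is negative, I would first show $V_* > \gamma z - 1$ (which follows from the explicit forms of $V_6,V_8$ in \eqref{P6}--\eqref{P8} and yields $B(V) < 0$ on the interval), then analyze the sign of the quadratic $A(V)$, and in the regime where $A<0$ use the specific value of $s$ to ensure $\frac{\gamma-1}{2}|B| > s^2|A|$; the polynomial sign analysis likely invokes the Fourier--Budan theorem, as used elsewhere in the paper. Simultaneously, $G(V, sV) > 0$ on $(V_*, 0)$, as the term $-g_2(V) = -V(1+V)(\lambda+V) > 0$ dominates. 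Hence $F/G > s$ along $\ell$, and a standard comparison argument yields $C(V) > sV$ on $(V_*, 0)$: if $C$ first touched $\ell$ at some $V_0$, continuity would give $C'(V_0) \leq \ell'(V_0) = s$, contradicting the strict subsolution inequality $F(V_0,sV_0)/G(V_0,sV_0) > s$.

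Finally, dividing $C(V) > sV$ by $V<0$ gives $\phi(V) := C(V)/V < s$ on $(V_*,0)$. A Taylor expansion of $F$ and $G$ near the origin, again using $g_2(V) = \lambda V + O(V^2)$ and $f_2(V) = \lambda + O(V)$, yields
\begin{equation*}
\frac{F(V, V\phi)}{G(V, V\phi)} \;=\; \phi + \frac{\phi\,V}{\lambda}\!\left[\frac{m(\gamma-1)(1-\gamma z)}{2} + 2mz\,\phi^2\right] + O(V^2)
\end{equation*}
for $\phi$ in bounded ranges. Combining this with $\phi(V) \leq s$ on $(V_*,0)$, we obtain $C'(V) = F/G = \phi(V) + O(V) \leq s + O(V)$ as $V\to 0^-$, hence $\limsup_{V\to 0^-} C'(V) \leq s$. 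The hardest step is the subsolution verification of the bracket: it is a delicate polynomial inequality that likely requires case-splitting by $P_* \in \{P_6, P_8\}$ and further subdividing the $\gamma$-range, each case reducing to a polynomial sign condition verifiable via the Fourier--Budan theorem.
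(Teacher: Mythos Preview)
Your overall strategy---using the straight line $C=sV$ as a lower barrier on $(V_*,0)$---is exactly the paper's approach, and your algebraic reduction is equivalent to the paper's: your bracket $-s^2A(V)+\tfrac{\gamma-1}{2}B(V)$ equals $\tfrac{1+V}{2}\,\Barrier_s(V,z)$ in the paper's notation. The differences are in the execution of the sign analysis and in your final step.

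For the sign of the bracket, the paper does \emph{not} split into cases by the sign of $A$ or invoke Fourier--Budan. Instead it observes that $\Barrier_s(V,z)$ is concave in $V$ on $[V_*,0]$ (since $\partial_V^2\Barrier_s=\tfrac{4zs^2}{(1+V)^3}\cdot(-1)$ has the right sign once one notes $V_*>-1$), so it suffices to check $\Barrier_s(0,z)<0$ (immediate) together with $\Barrier_s(V_*,z)<0$ and $\partial_V\Barrier_s(V_*,z)<0$. The latter two are verified by direct computation, separately for $P_*=P_8$ and $P_*=P_6$, using the closed-form expressions $1+V_*=C_*$ and the function $w(z)$ from \eqref{w(z)}; no root-counting theorem is needed. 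This is considerably cleaner than your proposed case analysis, which would otherwise work but is unnecessarily heavy.

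Your final Taylor-expansion step is superfluous and, as written, has a gap: you assume $\phi(V)=C(V)/V$ stays in a bounded range to control the $O(V)$ terms, but \emph{a priori} you only have the one-sided bound $\phi<s$, and the upper barriers $C(V)<\sqrt{-cV}$ from Theorem~\ref{collapsingthm} allow $\phi\to-\infty$. The paper simply passes directly from the barrier $C(V)>sV$ (with $C(0^-)=0$) to the slope bound at $0^-$; in the subsequent application (Lemma~\ref{pass collapse}) what is actually used is the difference-quotient inequality $C(V)/V<s$, which the barrier gives immediately.
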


\begin{proof}
We prove the Proposition by showing that the curve $C = s(\ga,z)V$ is a lower barrier for the solution to \eqref{ivp p*}, namely $C(V) >  s(\ga,z)V$ for $V\in (V_*,0)$. As $C\to 0^+$ as $V\to 0^-$ by Theorem \ref{collapsingthm}, we then deduce  $\limsup_{V\to 0^-}C'(V)\leq s(\gamma,z;P_*)$ as required. 

	For convenience, we write $s=s(\ga,z;P_*)$. To prove the lower barrier holds,
	as it is clear that $sV_*=\frac{(\ga-1)C_*}{4}<C_*$ for any $\ga\in(1,3]$, 
	it suffices to show that, for $P_*\in\{P_6,P_8\}$, $m=1,2$,  $\gamma\in(1,3]$ and $z\in\mathring{\mathcal{Z}}(\ga;P_*)$, we have
	\begin{align*}
		\frac{d}{dV} (C - s V) \Big | _{(V, sV)}=\frac{F(V,sV;\gamma,z)}{G(V,sV;\gamma,z)}-s>0, \ \text{for } V \in(V_*,0). 
	\end{align*}
	Since $G(V,sV;\ga,z)>0$ for $V\in(V_*,0)$, it is enough to show that 
	\begin{align*}
		F(V,sV;\gamma,z)-sG(V,sV;\gamma,z)>0.
	\end{align*}
	From \eqref{G(V,C)} and \eqref{F(V,C)}, we compute 
	\begin{align*}
		&F(V,sV;\gamma,z)-sG(V,sV;\gamma,z) = \frac{msV^2}{2} \Barrier_s (V,z)  ,
	\end{align*}
 	where  
	\begin{align}
	\Barrier_s (V,z) : = -[2s^2+\ga-1]V+\frac{2zs^2V}{1+V}-[4zs^2+(1-\gamma z)(\gamma-1)].
	\end{align}
	Since $msV^2<0$, it is sufficient to prove that
$		\Barrier_s (V,z)<0.$
	It is clear that $\Barrier_s (0,z) = -[4zs^2+(1-\ga z)(\ga-1)]<0$, and $\Barrier_s (V,z)$ is a concave function in the interval $V\in[V_*,0]$ as $V_*>1$. It is therefore enough to prove that $\Barrier_s (V_*,z) <0$ and  $\partial_V\Barrier_s (V_*,z)<0$. 
	We will treat $P_*=P_8$ and $P_*=P_6$ separately. When $P_*=P_8$, a direct computation gives
	\begin{align*}
		\Barrier_s (V_8,z)  &= -\frac{(\ga-1)(\ga-5)(\ga+3)z^2}{16(1+V_8)V_8^2}\Big[(\ga-2)^3z^2-(\ga-2)(\ga+4)z-2+[(\ga-2)^2z-2]w(z)\Big]<0
	\end{align*}
	where we used $z<z_M<\frac{1}{5}$ and $\ga\in(1,3]$ in the last inequality. Moreover, we see that 
	\begin{align*}
	\partial_V\Barrier_s (V_8,z) = -2s^2(\ga,z;P_8)-(\ga-1)+\frac{2zs^2(\ga,z;P_8)}{(1+V_8)^2} <  \frac{2s^2}{(1+V_8)^2}(z-(1+V_8)^2), 
	\end{align*}
	where we have used $\ga>1$. By  \eqref{ineq: V'6>0, V'8<0}, $z-(1+V_8)^2$ is increasing in $z\in(0,z_M]$, and hence, by  \eqref{Id: VCzM},
	\begin{align*}
		z-(1+V_8)^2\leq z_M-(1+V_8(z_M))^2 = (1-\ga)z_M<0.
	\end{align*}
	This completes the proof of $P_*=P_8$. 
	 In the case of $P_*=P_6$, from Theorem \ref{collapsingthm}, we only need to check $\Barrier_s (V_6,z)<0$ and $\partial_V\Barrier_s (V_6,z)<0$ for $\ga\in(1,2)$. We compute
	 \begin{align*}
	 		\Barrier_s (V_6,z)  &=- \frac{(\ga-1)(\ga-5)(\ga+3)z^2}{16(1+V_6)V_6^2}\Big[(\ga-2)^3z^2-(\ga-2)(\ga+4)z-2-[(\ga-2)^2z-2]w(z)\Big]\\
			&=-\frac{\ga^2(\ga-1)(\ga-5)(\ga+3)z^3}{2(1+V_6)V_6^2[(\ga-2)^3z^2-(\ga-2)(\ga+4)z-2+[(\ga-2)^2z-2]w(z)]}<0, 
	 \end{align*}
	where we have used $C_6=1+V_6$ in \eqref{s(ga,z)}, $\ga\in(1,2)$ and $z<z_M<\frac{1}{5}$. Recalling \eqref{P6} and \eqref{w(z)}, 
	\begin{align*}
		\partial_V\Barrier_s (V_6,z) = -2s^2(\ga,z;P_6)-(\ga-1)+\frac{2zs^2(\ga,z;P_6)}{(1+V_6)^2} =(1-\ga)\Big[1-\frac{\ga-1}{8V_6^2}(z-(1+V_6)^2)\Big]<0,
	\end{align*}
	where we used $V_6\leq V_6(z_M(\ga))< V_6(z_M(2))=-\frac{1}{2}$, $\ga\in(1,2)$ and $z\leq z_M<\frac{1}{5}$ in the last inequality. This finishes the proof.
\end{proof}

In order to show that $C(V)$ passes through the origin with finite slope, it remains to establish, firstly, that the trajectory is at least $C^1$ through the origin, and also to eliminate the possibility that $\lim_{V\to 0^-} C'(V)=-\infty$, equivalently, $\lim_{C\to0^+}V'(C)=0$. 
As the origin is a star point for \eqref{ODE}, the trajectory can leave, in principle, at any slope. We show that, in fact, there exists an analytic trajectory $V(C)$ with non-zero slope at the origin that coincides with the solution to \eqref{ivp p*}. 
To construct this analytic solution, we first make the formal Taylor expansion 
\begin{align}\label{eq:V(C)Taylor}
	V(C) = \sum_{\ell=1}^{\infty} v_\ell C^\ell.
\end{align}
We denote, from the formal Taylor expansion,
\begin{align}
	V'(C)F(V,C)(1+V) - G(V,C)(1+V) = \sum_{\ell=1} \mathcal{V}_\ell C^\ell.
\end{align}
For an analytic solution of \eqref{ODE}, clearly $\mathcal{V}_\ell=0$ for all $\ell\geq 1$.
For convenience, we employ the notation
\begin{align}
	V'(C) = \sum_{\ell=1}^{\infty} \ell v_\ell C^{\ell-1},\quad (v^2)_\ell = \sum_{i+j = \ell}v_iv_j, \quad (v^3)_\ell = \sum_{i+j+k = \ell}v_iv_jv_k,\quad  (v^4)_\ell = \sum_{i+j+k+l = \ell}v_iv_jv_kv_l.\notag
\end{align}
With these conventions, we obtain, from inserting \eqref{eq:V(C)Taylor} into \eqref{F(V,C)}
\begin{align}\label{V'(C)F(V,C)(1+V)}
	V'(C)&\,F(V,C)(1+V)\notag\\
	& = V'(C)C[C^2(1+V+mz)-a_1V^3-(3a_1-a_2)V^2-(1+m\gamma z+2a_1-a_2)V-(1+m\gamma z)]\notag\\
	&=-\sum_{\ell=1}^{\infty}a_1 \frac{\ell}{4}(v^4)_\ell C^\ell- \sum_{\ell=1}^{\infty} (3a_1-a_2)\frac{\ell}{3}(v^3)_\ell C^\ell- \sum_{\ell=1}^{\infty} (1+m\gamma z+2a_1-a_2)\frac{\ell}{2}(v^2)_\ell C^\ell\notag\\
	&\quad+\sum_{\ell=1}^{\infty}\frac{\ell}{2}(v^2)_\ell C^{\ell+2}+ \sum_{\ell=1}^{\infty} (1+mz)\ell v_\ell C^{\ell+2}-\sum_{\ell=1}^{\infty} (1+m\gamma z)  \ell v_\ell C^\ell.
\end{align}
Moreover, from \eqref{G(V,C)},
\begin{align}\label{G(V,C)(1+V)}
	G&\,(V,C)(1+V)\notag\\
	& =-V^4-(3+m\ga z)V^3-(3+2m\ga z)V^2-(1+m\ga z)V+(m+1)C^2V^2+(m+1+2mz)C^2V+2mzC^2\notag \\
	&=- \sum_{\ell=1}^{\infty} (v^4)_\ell C^\ell- \sum_{\ell=1}^{\infty} (3+m\ga z)(v^3)_\ell C^\ell-\sum_{\ell=3}^{\infty} (3+2m\ga z)(v^2)_\ell C^\ell+\sum_{\ell=1}^{\infty}(m+1) (v^2)_\ell C^{\ell+2}\notag\\
	&\quad-\sum_{\ell=1}^{\infty} (1+m\ga z)v_\ell C^\ell+\sum_{\ell=1}^{\infty}(m+1+2mz) v_\ell C^{\ell+2}+2mzC^2.
\end{align}
Now we study the difference of \eqref{V'(C)F(V,C)(1+V)} and \eqref{G(V,C)(1+V)}. First of all, at order 1, we have 
\begin{align*}
	\mathcal{V}_1=-(1+m\ga z)v_1+(1+m\ga z)v_1 = 0.
\end{align*}
Therefore, $v_1$ can take any value, as expected since $P_0$ is a stable node. By Proposition \ref{lower bound of v1}, we may suppose $v_1\in[\frac{1}{2s(\ga,z;P_*)},0]$. 
At order 2, we obtain
\begin{align*}
	\mathcal{V}_2&=- (1+m\gamma z+2a_1-a_2)v_1^2-2(1+m\ga z)v_2+(3+2m\ga z)v_1^2+(1+m\ga z)v_2-2mz \\
	&=-(1+m\ga z)v_2- \frac{m(\ga-1)(1-\ga z)}{2}v_1^2-2mz.
\end{align*}
This implies, for a solution of \eqref{ODE},
\begin{align}\label{v2}
	v_2 = \frac{-2mz- \frac{m(\ga-1)(1-\ga z)}{2}v_1^2}{1+m\ga z}<0.
\end{align}
For $\ell\geq 3$, recalling the convention that $v_\ell=0$ if $\ell\leq 0$, we obtain the coefficient
\begin{align*}
	\mathcal{V}_\ell = 
	-A_\ell v_\ell+B_\ell,
\end{align*}
where 
\begin{align}
	A_\ell&=(1+m\ga z)(\ell-1)> 0,\label{def:Aell}\\
	B_\ell & =(1-\frac{\ell a_1}{4})(v^4)_\ell + [3+m\ga z-\frac{\ell(3a_1-a_2)}{3}](v^3)_\ell+[3+2m\ga z-\frac{\ell(1+m\ga z + 2a_1-a_2)}{2}](v^2)_\ell\notag \\
	&\quad+ (\frac{\ell-2}{2}-1-m)(v^2)_{\ell-2}+[(1+mz)(\ell-2)-m-1-2mz]v_{\ell-2}.\label{def:Bell}
\end{align}
We notice that $B_\ell = B(\ell,v_1,v_2,..., v_{\ell-1})$. Thus, to have $\mathcal{V}_\ell=0$, we require
\begin{align}
	v_\ell = \frac{B_\ell}{A_\ell}. \label{v_l equation}
\end{align}
In the following, we prove that the solution to the recurrence relation \eqref{v_l equation} yields a convergent Taylor series, and hence an analytic solution of \eqref{ODE}. First, we prove a growth rate bound in $\ell$ for $v_\ell$. 
\begin{lemma}\label{lemma for B_L upperbound}
Let $P_*$ be $P_6$ or $P_8$. For any fixed $\gamma\in\Gamma(P_*) $ and $z\in\mathring{\mathcal{Z}}(\ga;P_*)$, let $\alpha\in (1,2)$ be given and suppose $v_1\in[\frac{1}{2s(\ga,z;P_*)},0]$. Then, there exists a constant $K_*=K_*(\gamma)>1$ such that if $K\geq K_*$ and $L\geq 5$, then if also the following inductive assumption holds,
\begin{align}\label{v_l assumption}
    |v_{\ell}| \leq \cfrac{K^{\ell-\alpha}}{\ell^3},\quad 2\leq \ell \leq L-1,
\end{align}
then we have
\begin{align}\label{B_L upperbound}
    |B_L| \leq \beta \cfrac{K^{L-\alpha}}{L^2} \Big(
    \cfrac{1}{K^{\alpha-1}}+\frac{1}{K}\Big)
\end{align}
for some constant $\beta=\beta(\gamma,\lambda)$.
\end{lemma}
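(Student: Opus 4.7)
The strategy is direct estimation of each of the five terms in~\eqref{def:Bell}. For each convolution $(v^k)_L$ with $k=2,3,4$, I split the sum by the number $j$ of summation indices equal to $1$. The \emph{bulk} piece ($j=0$) invokes the inductive bound $|v_\ell|\le K^{\ell-\alpha}/\ell^3$ at every position; the \emph{boundary} pieces ($j\ge 1$) replace each unit-index factor by $|v_1|\le C_0:=1/(2|s(\gamma,z;P_*)|)$, furnished by Proposition~\ref{lower bound of v1}, and use the inductive bound for the remaining factors. This separation is necessary because $|v_1|$ is bounded by a constant rather than by $K^{1-\alpha}$ as the formal inductive pattern would suggest at $\ell=1$.

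\textbf{Key ingredient.} The main analytic tool is the convolution inequality
\[
\sum_{\substack{i_1+\cdots+i_n=M\\ i_1,\ldots,i_n\ge 2}}\prod_{r=1}^{n}\frac{1}{i_r^3}\;\le\;\frac{C_n}{M^3},\qquad n=1,2,3,4,\ M\ge 2n,
\]
obtained by isolating the largest index (necessarily $\ge M/n$) and bounding the remaining factors against the tail of $\sum_{r\ge 2}r^{-3}$. Applied to the $j$-boundary subsum of $(v^k)_L$, whose $k-j$ non-unit indices sum to $L-j$, this yields the bound $\binom{k}{j}C_0^{\,j}\cdot C_{k-j}\,K^{L-j-(k-j)\alpha}/L^3$. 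Multiplying by the $O(L)$ coefficient of $(v^k)_L$ in~\eqref{def:Bell} and factoring $K^{L-\alpha}/L^2$ out reduces the $(k,j)$-contribution to a constant multiple of $C_0^{\,j}\,K^{-j+(j-k+1)\alpha}$.

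\textbf{Assembly.} Under $\alpha\in(1,2)$ and $K\ge 1$, the only exponent $-j+(j-k+1)\alpha$ exceeding $-1$ is the $k=2$, $j=0$ case, which equals $-\alpha$ and satisfies $K^{-\alpha}\le K^{1-\alpha}=1/K^{\alpha-1}$, yielding the first piece of the target. The $k=2$, $j=1$ case yields $C_0/K$, producing the second. All other $(k,j)$ cases produce exponents $\le -2$, absorbed into $1/K$. The auxiliary terms $(v^2)_{L-2}$ and $v_{L-2}$ admit identical treatment with exponents $\le -2$ (and the same $L^{-3}$ convolution decay, since $L-2\sim L$). Summing all contributions and choosing $K\ge K_*(\gamma)$ large enough to absorb the finitely many resulting fixed constants yields \eqref{B_L upperbound} with $\beta=\beta(\gamma,\lambda)$.

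\textbf{Main obstacle.} The principal difficulty is the bookkeeping: enumerating all $(k,j)$ cases, tracking the combinatorial multiplicities $\binom{k}{j}$ and the $O(L)$-size coefficients in~\eqref{def:Bell}, and verifying that the $L^{-3}$ decay from the convolution inequality combines with the linear coefficients to produce exactly the $L^{-2}$ decay required. A secondary subtlety is that $C_0$ depends on $\gamma$ and $z$ through $|s(\gamma,z;P_*)|$, so the threshold $K_*$ must be chosen commensurately; once the convolution inequality is in place, however, the remaining work is essentially arithmetic.
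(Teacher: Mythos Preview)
Your approach is correct and, on one point, more careful than the paper's own argument. The paper applies the convolution inequality (its Lemma~\ref{lem:B1}) directly to the full sums $(v^k)_L$ with all indices $\ge 1$, writing for instance $|(v^4)_L|\le K^{L-4\alpha}\sum(ijkl)^{-3}$; this tacitly requires $|v_1|\le K^{1-\alpha}$, which for $\alpha>1$ \emph{decreases} in $K$ and is not guaranteed by the hypothesis $v_1\in[\tfrac{1}{2s},0]$ uniformly for $K\ge K_*$. Your splitting by the number $j$ of unit indices is precisely the device that makes this step rigorous: unit-index factors are bounded by the fixed constant $C_0$, the remaining factors by the inductive hypothesis, and the convolution estimate is applied only to indices $\ge 2$. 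The cost is the bookkeeping you flag; the gain is a bound that is genuinely uniform in $K\ge K_*$.

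One minor slip in your Assembly paragraph: you state that the $k=2$, $j=0$ exponent $-\alpha$ is the unique one exceeding $-1$, but $-\alpha<-1$ for $\alpha>1$. The largest exponent is actually $k=2$, $j=1$, which equals exactly $-1$ and produces the $C_0/K$ contribution. This does not affect the conclusion, since $K^{-\alpha}\le K^{-1}$ and every $(k,j)$-contribution is therefore dominated by a constant multiple of $K^{-1}\le K^{1-\alpha}+K^{-1}$. Note also that in your argument the constants (including powers of $C_0$) are absorbed into $\beta=\beta(\gamma,\lambda)$, not into $K_*$; any $K_*>1$ already suffices for the exponent comparisons you use.
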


For the proof, we will require the following result from \cite{GHJS22} to estimate certain combinations of coefficients. 
\begin{lemma}[{\cite[Lemma B.1]{GHJS22}}]\label{lem:B1}
There exists a universal constant $a>0$ such that for all $L\in \mathbb{N}$, the following inequalities hold
\begin{align*}
    \sum_{\substack{i+j+k = L\\ i,j,k\geq 1}}\cfrac{1}{i^3j^3k^3} \leq \cfrac{a}{L^3}\quad\text{and}\quad
    \sum_{\substack{i+j = L\\ i,j\geq 1}} \cfrac{1}{i^3j^3} \leq \cfrac{a}{L^3}.
\end{align*}
\end{lemma}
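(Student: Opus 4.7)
I would prove the bound by estimating each of the five summands in the definition \eqref{def:Bell} of $B_L$, namely the three convolutions $(v^n)_L$ for $n=2,3,4$, the convolution $(v^2)_{L-2}$, and the term $v_{L-2}$. First observe that every coefficient appearing in \eqref{def:Bell} is an affine function of $L$ with slope and intercept depending only on $\gamma,\lambda,m$ (through $a_1, a_2, z$), and is therefore bounded in absolute value by $c_0(\gamma,\lambda)\,L$ for $L \geq 5$. It then suffices to show that each convolution is bounded by a quantity of the form $C(\gamma,\lambda)\,(K^{L-\alpha}/L^3)(K^{-(\alpha-1)} + K^{-1})$; multiplying by the $O(L)$ coefficient yields the claimed $L^{-2}$ decay.

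\textbf{Case analysis of the convolutions.} Since the inductive bound is valid only for $\ell \geq 2$, each convolution must be decomposed according to the number $j$ of its factors that equal $v_1$. Set $M := 1/(2|s(\gamma,z;P_*)|)$, so that $|v_1| \leq M$ is bounded independently of $K$ by the hypothesis on $v_1$ (cf.\ Proposition \ref{lower bound of v1}). For $(v^n)_L$ with $n \in \{2,3,4\}$ and $L \geq 5$, splitting by $j$ and applying the inductive bound to the remaining $n-j$ factors gives
\[
\bigl|(v^n)_L^{(j)}\bigr| \leq \binom{n}{j} M^j\, K^{L-j-(n-j)\alpha}\!\!\!\!\sum_{\substack{i_1+\cdots+i_{n-j}=L-j \\ i_q\geq 2}} \prod_{q} i_q^{-3} \leq C_{n,j}\,\frac{K^{L-j-(n-j)\alpha}}{L^3},
\]
where in the last inequality the summation range is relaxed to $i_q \geq 1$ by monotonicity, Lemma \ref{lem:B1} is applied (iteratively for $n-j \in \{3,4\}$), and $L \geq 5$ is used so that $(L-j)^{-3} \leq c\,L^{-3}$ for $0 \leq j \leq n$. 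The analogous estimate for $(v^2)_{L-2}$ is obtained by replacing $L$ with $L-2$, which introduces an additional factor $K^{-2}$ relative to $(v^2)_L$; the term $|v_{L-2}| \leq K^{L-2-\alpha}/(L-2)^3$ is immediate.

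\textbf{Identifying dominant terms.} After multiplying each partial sum by its $O(L)$ coefficient, the $j$-th subcase of $(v^n)_L$ contributes to $|B_L|$ a term of order $K^{L-j-(n-j)\alpha}/L^2$. The two dominant terms both come from $(v^2)_L$. The $j=1$ subcase gives $K^{L-1-\alpha}/L^2 = (K^{L-\alpha}/L^2)\cdot K^{-1}$, producing the $K^{-1}$ part of the claim; the $j=0$ subcase (both indices $\geq 2$) gives $K^{L-2\alpha}/L^2 = (K^{L-\alpha}/L^2)\cdot K^{-\alpha} \leq (K^{L-\alpha}/L^2)\cdot K^{-(\alpha-1)}$, valid for $K \geq 1$, producing the $K^{-(\alpha-1)}$ part. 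All remaining subcases — whether from $(v^3)_L$, $(v^4)_L$, $(v^2)_{L-2}$, or $v_{L-2}$ — carry at least one additional factor of either $K^{-\alpha}$ (one extra $v_\ell$ with $\ell \geq 2$) or $K^{-2}$ (from the shift $L \mapsto L-2$) compared with the two dominant terms, and are therefore absorbed into them upon taking $K \geq K_*(\gamma)$ sufficiently large so that the finitely many implicit constants $C_{n,j}\binom{n}{j}M^j$ are all dominated. Collecting everything yields the stated bound with $\beta = \beta(\gamma,\lambda)$.

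\textbf{Main obstacle.} The delicate point is ensuring that the $K^{-1}$ factor appears with a constant independent of $L$, which hinges on cleanly isolating the single-$v_1$ subcase of $(v^2)_L$ with its combinatorial prefactor $\binom{2}{1}=2$ and on the fact that $M$ is determined purely by $\gamma,z$. Beyond this, the proof is careful bookkeeping: verifying that each coefficient in \eqref{def:Bell} is genuinely $O(L)$, tracking the iterated use of Lemma \ref{lem:B1} in the triple and quadruple convolutions, and choosing $K_*(\gamma)$ at the end to absorb all implicit constants.
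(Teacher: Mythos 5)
Your proposal does not address the statement at all. The statement to be proved is the elementary convolution estimate (quoted in the paper from \cite{GHJS22}): there is a universal constant $a>0$ with $\sum_{i+j=L,\,i,j\geq 1} i^{-3}j^{-3}\leq a L^{-3}$ and $\sum_{i+j+k=L,\,i,j,k\geq 1} i^{-3}j^{-3}k^{-3}\leq a L^{-3}$. What you have written is instead a proof of the bound on $B_L$ in Lemma \ref{lemma for B_L upperbound}: you estimate the five summands of \eqref{def:Bell}, split the convolutions $(v^n)_L$ according to how many factors equal $v_1$, and identify the dominant $K^{-1}$ and $K^{-(\alpha-1)}$ contributions. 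Indeed, you explicitly invoke Lemma \ref{lem:B1} as a tool inside your argument, so your text presupposes the very statement it was supposed to establish and cannot serve as its proof. (In the paper this lemma carries no proof either; it is cited from \cite{GHJS22}, and your bookkeeping belongs to the proof of Lemma \ref{lemma for B_L upperbound}, which is a different statement.)

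If you want to prove the stated inequalities themselves, the argument is short and of a different nature: in $\sum_{i+j=L}i^{-3}j^{-3}$ at least one index is $\geq L/2$, so by symmetry the sum is at most $2\sum_{i\geq 1} i^{-3}\,(L/2)^{-3}=16\,\zeta(3)\,L^{-3}$; in $\sum_{i+j+k=L}i^{-3}j^{-3}k^{-3}$ at least one index is $\geq L/3$, so the sum is at most $3\,(L/3)^{-3}\sum_{i,j\geq 1} i^{-3}j^{-3}=81\,\zeta(3)^2\,L^{-3}$. Taking $a=\max\{16\,\zeta(3),\,81\,\zeta(3)^2\}$ gives the claim for all $L\in\mathbb{N}$. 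None of the structure of your proposal (the coefficients of \eqref{def:Bell} being $O(L)$, the role of $v_1$ and of $K$, the choice of $K_*$) is relevant to this lemma, since it concerns only the fixed sequence $\ell\mapsto \ell^{-3}$ and not the coefficients $v_\ell$.
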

A similar argument also yields
\begin{align*}
    \sum_{\substack{i+j+k+l = L\\ i,j,k,l\geq 1}}\cfrac{1}{i^3j^3k^3l^3}& \leq \cfrac{a}{L^3}.
\end{align*}
\begin{proof}[Proof of Lemma \ref{lemma for B_L upperbound}]
 From \eqref{v_l equation}, we may write $v_\ell = \frac{B_\ell(\ell,v_1,\ga,z)}{A_\ell(\ga,z)}$. It is trivial to see there exists a finite $K_* = K_*(\ga)>1$ such that \eqref{v_l assumption} holds for $\ell=1,2,3,4$.

First, by using the induction assumption \eqref{v_l assumption}, $v_\ell=0$ if $\ell\leq 0$ and Lemma \ref{lem:B1}, we have
\begin{align*}
	|(v^4)_L| &
	= \Big|  \sum_{\substack{i+j+k+l = L \\ i,j,k,l\geq 1}}v_iv_jv_kv_l \Big| \leq K^{L-4\alpha}   \sum_{\substack{i+j+k+l = L \\ i,j,k,l\geq 1}}\frac{1}{i^3j^3k^3l^3}\lesssim \frac{K^{L-4\alpha} }{L^3} ,\\
	|(v^3)_L| &
	= \Big|  \sum_{\substack{i+j+k = L \\ i,j,k\geq 1}}v_iv_jv_k \Big| \leq K^{L-3\alpha}   \sum_{\substack{i+j+k = L \\ i,j,k\geq 1}}\frac{1}{i^3j^3k^3}  \lesssim \frac{K^{L-3\alpha} }{L^3},\\
	|(v^2)_L| &
	= \Big|  \sum_{\substack{i+j = L \\ i,j,k\geq 1}}v_iv_j \Big| \leq K^{L-2\alpha}   \sum_{\substack{i+j = L \\ i,j\geq 1}}\frac{1}{i^3j^3} \lesssim \frac{K^{L-2\alpha} }{L^3},\\
		|(v^2)_{L-2}| &
		= \Big|  \sum_{\substack{i+j = L-2 \\ i,j,k\geq 1}}v_iv_j \Big| \leq K^{L-2-2\alpha}   \sum_{\substack{i+j = L \\ i,j\geq 1}}\frac{1}{i^3j^3} \lesssim \frac{K^{L-2-2\alpha} }{L^3},
\end{align*}
where we have used  
the assumptions $\al\in(1,2)$ and $K\geq 1$ and, moreover, $L-2> 2$.
Note that as $L\geq 5$, there exists a universal constant $M>0$ such that $\frac{L}{L-2}\leq M$.

Now we  estimate $B_L$, recalling the definition in \eqref{def:Bell}, by employing these combinatorial estimates as
\begin{align*}
    |B_L| & \lesssim L\bigg(\Big|(v^4)_L\Big| + \Big|(v^3)_L\Big| + \Big|(v^2)_L\Big|+\Big|(v^2)_{L-2}\Big| +|v_{\ell-2}|\bigg) \\
    & \lesssim (L+1)\bigg(\cfrac{K^{L-4\alpha}}{L^3} + \cfrac{K^{L-3\alpha}}{L^3} + \cfrac{K^{L-2\alpha}}{L^3} + \cfrac{K^{L-2-2\alpha}}{L^3} + \cfrac{K^{L-2-\alpha}}{L^3}\bigg) \lesssim \cfrac{K^{L-\alpha}}{L^2} \Big(\cfrac{1}{K^{\alpha-1}}+\frac{1}{K}\Big).
\end{align*}
This completes the proof.
\end{proof}

We next justify the inductive growth assumption on $v_\ell$. 

\begin{lemma}\label{lemma for c_l}
Let $P_*$ be $P_6$ or $P_8$. For any fixed $\gamma\in\Gamma(P_*) $ and $z\in\mathring{\mathcal{Z}}(\ga;P_*)$,  let $\alpha\in(1,2)$ be given. Suppose $v_1\in[\frac{1}{2s(\ga,z;P_*)},0]$ and let $v_{\ell}$ solve the recursive relation \eqref{v_l equation} for $\ell\geq 2$. Then there exists a constant $K=K(\ga,\la)>1$ such that 
$c_\ell$ satisfies the bound
\begin{align}\label{c_l condition}
    |v_{\ell}| \leq \cfrac{K^{\ell-\alpha}}{\ell^3}. 
\end{align}
\end{lemma}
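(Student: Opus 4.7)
The plan is to proceed by strong induction on $\ell$, using Lemma \ref{lemma for B_L upperbound} as the key inductive tool together with the recursion $v_\ell=B_\ell/A_\ell$ from \eqref{v_l equation}. The base cases $\ell=1,2,3,4$ are handled directly: $v_1$ is in the fixed compact interval $[\frac{1}{2s(\ga,z;P_*)},0]$, $v_2$ is given explicitly by \eqref{v2} as a bounded function of $v_1$ and the parameters $(\gamma,z)$, and $v_3$, $v_4$ are then determined by \eqref{v_l equation} and depend continuously on $(\gamma,z,v_1)$. Since $\gamma\in\Gamma(P_*)$ and $z\in\mathring{\mathcal Z}(\gamma;P_*)$ are fixed and $v_1$ ranges in a compact set, one can choose $K_0=K_0(\gamma,\lambda)>1$ so large that $|v_\ell|\le K_0^{\ell-\alpha}/\ell^3$ for $\ell=1,2,3,4$ simultaneously.

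For the inductive step, suppose \eqref{v_l assumption} holds for all $2\le\ell\le L-1$ with some $L\ge 5$ and some $K\ge K_*$, where $K_*$ is the constant from Lemma \ref{lemma for B_L upperbound}. Then by that lemma,
\begin{equation*}
|B_L|\le \beta\,\frac{K^{L-\alpha}}{L^2}\Big(\frac{1}{K^{\alpha-1}}+\frac{1}{K}\Big).
\end{equation*}
Since $A_L=(1+m\gamma z)(L-1)$ and $L\ge 5$, there exists a constant $c=c(\gamma)>0$ (independent of $L$) such that $A_L\ge c\,L$. Combining these,
\begin{equation*}
|v_L|=\frac{|B_L|}{A_L}\le \frac{\beta}{c}\cdot\frac{K^{L-\alpha}}{L^3}\Big(\frac{1}{K^{\alpha-1}}+\frac{1}{K}\Big).
\end{equation*}

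The step that closes the induction is to enlarge $K$ to absorb the constant $\beta/c$. Since $\alpha\in(1,2)$, both $K^{1-\alpha}$ and $K^{-1}$ tend to $0$ as $K\to\infty$, so one can fix $K=K(\gamma,\lambda)\ge K_0\vee K_*$ large enough that $\frac{\beta}{c}\big(K^{1-\alpha}+K^{-1}\big)\le 1$. With this choice of $K$, the inductive hypothesis propagates from $L-1$ to $L$, which completes the induction and proves the claimed bound \eqref{c_l condition}. The only mildly delicate point is to make the choice of $K$ independent of $L$ and uniform over the compact parameter region for $(\gamma,z,v_1)$; this is precisely what the preceding base-case discussion and Lemma \ref{lemma for B_L upperbound} provide.
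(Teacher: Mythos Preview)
Your proof is correct and follows essentially the same approach as the paper: strong induction on $\ell$, with the base cases $\ell\le 4$ handled by direct inspection and the inductive step closed by invoking Lemma~\ref{lemma for B_L upperbound}, the lower bound $A_L\gtrsim L$, and choosing $K$ large enough (using $\alpha>1$) to absorb the constant. The paper's version is slightly terser but the logic is identical.
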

\begin{proof} We argue by induction on $\ell$. 
When $\ell =2,3,4$, it is clear from \eqref{c_l condition}, the forms of $A_\ell$ and $B_\ell$ defined by \eqref{def:Aell}--\eqref{def:Bell} that there exists a constant $K(\gamma,\lambda)$ such that $v_2$, $v_3$, and $v_4$ satisfy the bounds. 
Suppose for some $L\geq 5$, \eqref{c_l condition} holds for all $2\leq \ell \leq L-1$. Then we may apply Lemma \ref{lemma for B_L upperbound}  
and with the recursive relation \eqref{v_l equation}, we obtain 
\begin{align*}
    |v_L| \leq \cfrac{\beta}{|A_L|} \cfrac{K^{L-\alpha}}{L^2} \Big(\frac{1}{K^{\alpha-1}}+\frac{1}{K}\Big),
\end{align*}
where $A_L = (1+m\ga z)(L-1)\geq \frac L2$ for $L\geq 5$. 
Therefore,
\begin{align*}
    |v_L| \leq \frac{2\beta}{ L} \cfrac{K^{\ell-\alpha}}{L^2} \Big(\frac{1}{K^{\alpha-1}}+\frac{1}{K}\Big). 
\end{align*}
Choosing $K$  sufficiently large, as $\al>1$, it is clear that the estimate \eqref{c_l condition} holds for $\ell = L$, thus concluding the proof.
\end{proof}

We therefore obtain the following theorem by standard arguments.
\begin{theorem}\label{analytic}
For any fixed $\gamma\in(1,3]$, $z\in\mathring{\mathcal{Z}}$ and $v_1\in[\frac{1}{2s(\gamma,z;P_*)},0]$ where $s(\ga,z;P_*)$ is defined by \eqref{s(ga,z)}, there exists $\epsilon=\epsilon(\ga,z)>0$ such that the Taylor series
\begin{align}\label{local analytic solution}
	V(C) = \sum_{\ell =1}^{\infty} v_{\ell}C^{\ell}
\end{align}
 converges absolutely on the interval $(-\epsilon,\epsilon)$. Moreover, $V(C)$ is the unique analytic solution to \eqref{ODE} with the given choice of $v_1$.
\end{theorem}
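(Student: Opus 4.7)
The plan is to deduce Theorem \ref{analytic} from the coefficient bound established in Lemma \ref{lemma for c_l} by a standard majorant argument, and then to verify that the resulting analytic function solves \eqref{ODE} and is unique.

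First, I would fix any $\alpha\in(1,2)$ and invoke Lemma \ref{lemma for c_l} to produce $K=K(\gamma,\la)>1$ such that $|v_\ell|\leq K^{\ell-\alpha}/\ell^3$ for all $\ell\geq 1$ (the bound at $\ell=1$ being ensured by the hypothesis $v_1\in[\frac{1}{2s(\ga,z;P_*)},0]$ and enlarging $K$ if necessary). Setting $\epsilon:=1/(2K)$, the comparison
$$\sum_{\ell=1}^{\infty}|v_\ell||C|^\ell \;\leq\; K^{-\alpha}\sum_{\ell=1}^{\infty}\frac{(K|C|)^\ell}{\ell^3}$$
is finite for $|C|\leq\epsilon$, so $V(C)=\sum_{\ell\geq 1} v_\ell C^\ell$ converges absolutely on $(-\epsilon,\epsilon)$ and defines an analytic function with $V(0)=0$ and $V'(0)=v_1$.

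Next, I would check that $V(C)$ solves \eqref{ODE}. The recurrence \eqref{v_l equation} was constructed precisely so that the formal $C^\ell$-coefficient $\mathcal{V}_\ell$ of $V'(C)F(V,C)(1+V)-G(V,C)(1+V)$ vanishes for every $\ell\geq 1$. Within the radius of convergence, term-by-term differentiation and multiplication of absolutely convergent power series are legitimate, so the formal identity upgrades to the genuine analytic identity
$$V'(C)\,F(V(C),C)\,(1+V(C)) \;=\; G(V(C),C)\,(1+V(C)).$$
Since $V(0)=0$, the factor $1+V(C)$ is nonzero on a possibly smaller neighborhood of $0$, and dividing through yields $V'(C)F(V(C),C)=G(V(C),C)$, which is exactly $dV/dC=G/F$, i.e.\ \eqref{ODE}.

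Finally, for uniqueness I would observe that any analytic solution $\tilde V(C)=\sum_{\ell\geq 1}\tilde v_\ell C^\ell$ of \eqref{ODE} with $\tilde V(0)=0$ and $\tilde V'(0)=v_1$ must have Taylor coefficients satisfying the same formal relations $\mathcal{V}_\ell=0$. Since $A_\ell=(1+m\ga z)(\ell-1)>0$ for $\ell\geq 2$ and $\tilde v_2$ is forced by \eqref{v2}, the recursion $\tilde v_\ell=B_\ell/A_\ell$ uniquely determines all subsequent coefficients, forcing $\tilde v_\ell=v_\ell$ and hence $\tilde V\equiv V$ on $(-\epsilon,\epsilon)$. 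The real content of the theorem is the coefficient bound already established in the previous lemma; no significant obstacle remains, apart from the trivial observation that $1+V(C)\neq 0$ near the origin, which permits the division step above.
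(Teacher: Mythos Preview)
Your proposal is correct and matches the paper's approach exactly: the paper states only that the theorem follows ``by standard arguments'' from Lemma~\ref{lemma for c_l}, and you have supplied precisely those standard details (majorant bound for convergence, formal-to-actual upgrade within the radius of convergence, and uniqueness from the nonvanishing of $A_\ell$ in the recursion).
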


\begin{lemma}\label{pass collapse}
Let $P_*$ be $P_6$ or $P_8$, and $m=1,2$. For any fixed $\gamma\in\Gamma(P_*) $ and $z\in\mathring{\mathcal{Z}}(\ga;P_*)$,  the solution to the system \eqref{ivp p*} analytically passes through $P_0$ with a nonzero and finite slope (i.e. $-\infty<v_1<0$).
\end{lemma}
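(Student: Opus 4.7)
The plan combines Proposition~\ref{lower bound of v1}, Theorem~\ref{analytic}, and the barriers from Theorem~\ref{collapsingthm} to pin down $v_1 := \lim_{C\to 0^+} V'(C)$.

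Proposition~\ref{lower bound of v1} already gives $\liminf_{C\to 0^+} V'(C) \geq 1/s(\gamma,z;P_*) > -\infty$, so any limiting slope is automatically finite. The outstanding tasks are to establish that the limit exists and that it is nonzero.

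For existence of the limit, I would match the trajectory with one of the analytic solutions furnished by Theorem~\ref{analytic}. Pick $C_0>0$ small enough that the analytic family $\{V_{v_1}(C)\}_{v_1\in[1/(2s),0]}$ is jointly defined on $(0,C_0]$. The map $v_1\mapsto V_{v_1}(C_0)$ is continuous (from analyticity of the coefficient recursion~\eqref{v_l equation} in the parameter $v_1$) and, for $C_0$ small, monotone via the leading-order behavior $V_{v_1}(C_0)=v_1 C_0+O(C_0^2)$. Provided $V(C_0)$ lies in the image, the intermediate value theorem supplies some $v_1^*$ with $V_{v_1^*}(C_0)=V(C_0)$; ODE uniqueness at the regular point $(V(C_0),C_0)$ then forces the trajectory to coincide with $V_{v_1^*}$ on $(0,C_0]$, yielding $v_1 = v_1^*\in[1/(2s),0]$.

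To rule out $v_1=0$, I exploit the barriers $C<B_1(V)=\sqrt{-V}$ (for $\gamma\in(1,2)$) and $C<B_{3/2}(V)=\sqrt{-3V/2}$ (for $\gamma\in[2,3]$) from Theorem~\ref{collapsingthm}, which translate to $V(C)\leq -\kappa^{-1}C^2$ for $\kappa\in\{1,3/2\}$ along the trajectory. By contrast, the analytic solution $V_0$ corresponding to $v_1=0$ satisfies $V_0(C)=v_2 C^2+O(C^3)$ with $v_2=-2mz/\lambda$ from \eqref{v2}, so $|V_0(C)|/C^2\to 2mz/\lambda$ as $C\to 0^+$. For $V_0$ to respect the barrier we would need $2mz/\lambda\geq 1/\kappa$, i.e.~$mz(2\kappa-\gamma)\geq 1$; but in both relevant cases $2\kappa-\gamma\in(0,1]$, whence $mz(2\kappa-\gamma)\leq 2 z_M<2/5<1$. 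Hence $V_0$ violates the barrier near the origin while the trajectory obeys it, so the trajectory cannot coincide with $V_0$, i.e.~$v_1^*\neq 0$. Combined with Proposition~\ref{lower bound of v1}, this locates $v_1\in[1/s,0)\subset(-\infty,0)$.

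The main obstacle is the intermediate value step: one must verify that $V(C_0)$ lies in the image of $v_1\mapsto V_{v_1}(C_0)$ and that this map is monotone on the relevant range. Uniform control of the coefficients $v_\ell(v_1)$ from Lemma~\ref{lemma for c_l}, together with the bound $V(C_0)>C_0/s$ from Proposition~\ref{lower bound of v1}, underpins both. Once the identification with an analytic solution is in hand, the barrier comparison ruling out $V_0$ is elementary via~\eqref{v2} and the parameter constraints $z\le z_M<1/5$, $\gamma\in(1,3]$.
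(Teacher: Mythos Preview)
Your approach is essentially the same as the paper's. Both arguments use the analytic family $\overline V(C;v_1)$ from Theorem~\ref{analytic}, continuity in $v_1$, the barrier $V(C)>C/s$ from Proposition~\ref{lower bound of v1} to handle the lower end of the $v_1$-range, and the quadratic barriers $V(C)<-\kappa^{-1}C^2$ from Theorem~\ref{collapsingthm} to show that $\overline V(C;0)$ lies strictly above the trajectory; an intermediate value argument plus ODE uniqueness then pins down $v_1^*\in(-\infty,0)$. Your inequality $mz(2\kappa-\gamma)<1$ is algebraically equivalent to the paper's verification $v_2|_{v_1=0}=-2mz/(1+m\gamma z)>-2/3$. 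The only cosmetic difference is that you invoke monotonicity of $v_1\mapsto V_{v_1}(C_0)$, whereas the paper uses bare continuity, which already suffices for the IVT step once the two endpoint inequalities are in hand.
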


\begin{proof}
	Fix $\gamma\in\Gamma(P_*) $ and $z\in\mathring{\mathcal{Z}}(\ga;P_*)$,  let $\epsilon$ be given as  in Theorem \ref{analytic}, and denote by $V(C)$ the solution of \eqref{ivp p*}. 
We observe that the local analytic solution $\overline V(C)$ at $P_0$, obtained in Theorem \ref{analytic}, depends on $v_1$, $\gamma$ and $z$ within the domain $C\in(0, \epsilon)$. Through standard compactness and uniform convergence arguments, the coefficients $v_\ell = v_\ell(v_1)$ are continuous functions of $v_1$ for each $\ell\geq 2$ and $v_1\in[\frac{1}{2s(\ga,z;P_*)},0]$. Consequently, we deduce that $\overline V(C)=\overline V(C;v_1)$ is a continuous function of $v_1$ on its domain as well. From Proposition \ref{lower bound of v1}, there exists $\tilde\epsilon\in(0,\epsilon)$ such that $\overline{V}(C;\frac{1}{2s(\ga,z;P_*)})< V(C)$ for $C\in(0,\tilde \epsilon)$. 

Moreover, from \eqref{v2},  we have $v_2|_{v_1=0} = -\frac{2mz}{1+m\ga z}$. As $\frac{d}{dz} \big(v_2|_{v_1=0}\big) = - \frac{2m}{(1+m\ga z)^2}<0$, we deduce
\begin{align}
	v_2|_{v_1=0} \geq -\frac{2mz_M}{1+m\ga z_M}  = -\frac{2m}{2+\ga+2\sqrt{2\ga}+m\ga} > -\frac{2}{3}.
\end{align}	
Thus, in the interval $C\in(0, \epsilon)$ (possibly shrinking $\epsilon$), $\overline{V}(C;0)$ is bounded below by $V(C) = -C^2$ and $V(C) = -\frac{2}{3}C^2$, which are the upper barriers of the solution curves as stated in Theorem \ref{collapsingthm}. Consequently, the solution curve $V(C)< \overline{V}(C;0)$ for $C\in(0,\epsilon)$. By a simple continuity argument, there  exists $v_1\in ( \frac{1}{2s(\ga,z;P_*)},0)$ such that $\overline{V}(C;v_1)$ intersects the solution curve at some $C\in(0, \epsilon)$. The uniqueness of  solutions to the ODE implies that these two curves coincide within a neighborhood of $P_0$ and, in particular, pass through $P_0$ with finite slope.
\end{proof}

\begin{lemma}\label{L: finite slope in x}
In the original $x$ coordinates, the solution $(V(x),C(x))$ is $C^1$ with finite slope through $x=0$.
\end{lemma}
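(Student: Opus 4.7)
The plan is to use the analytic Taylor expansion $V(C) = v_1 C + v_2 C^2 + \ldots$ from Lemma~\ref{pass collapse} to parameterize the trajectory by $C$ near the origin, and then translate analyticity in $C$ into smoothness in $x$ via the ODE system~\eqref{ODE system}. The key algebraic input I would record first is the identity
\[
f_2(0) = a_1 - a_2 + a_3 = 1 + m\gamma z = \lambda,
\]
which follows directly from the definition $z = (\lambda-1)/(m\gamma)$ in~\eqref{a_1234&z}. Combined with $f_1(0)=1+mz$ and the expansion of $V(C)$, this yields
\[
F(V(C),C) = -\lambda C\bigl(1 + O(C)\bigr), \qquad D(V(C),C) = 1 + O(C),
\]
so the $\lambda$ hidden in $f_2(0)$ cancels exactly against the $\lambda$ in the denominator of the equation for $dC/dx$.

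Inverting the ODE to work in the $C$ variable gives
\[
\frac{dx}{dC} \;=\; -\frac{\lambda x\, D(V(C),C)}{F(V(C),C)} \;=\; \frac{x}{C}\bigl(1 + O(C)\bigr).
\]
The substitution $y(C) := x/C$ removes the singularity and produces the regular linear equation $\frac{dy}{dC} = y\,g(C)$ with $g$ analytic at $C=0$, whose solutions take the form $y(C) = y(0)\exp\!\bigl(\int_0^C g(s)\,ds\bigr)$. Since along our nontrivial trajectory $x \neq 0$ for $C$ in a punctured neighborhood of $0$, the solution $y$ cannot vanish identically, so $y(0)\neq 0$; moreover $y(0) < 0$, since in both the collapsing phase ($x<0$, $C>0$) and the expanding phase ($x>0$, $C<0$) the ratio $x/C$ is negative.

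Consequently $x(C) = y(C)\cdot C$ is analytic in $C$ with $x(0)=0$ and $x'(0) = y(0) \neq 0$. The inverse function theorem then yields $C(x)$ analytic near $x=0$ with $C'(0) = 1/y(0)$, and $V(x) = V(C(x))$ is analytic with $V'(0) = v_1/y(0)$; both slopes are finite and nonzero. Thus $(V(x),C(x))$ is in fact analytic, and \emph{a fortiori} $C^1$ with finite (nonzero) slope through $x=0$. The only substantive computational step is the verification $f_2(0) = \lambda$, after which everything reduces to standard ODE theory, so I do not anticipate any serious technical obstacle.
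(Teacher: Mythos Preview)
Your proof is correct and in fact yields a stronger conclusion (analyticity, not merely $C^1$) than what the paper establishes. The approach, however, differs from the paper's in a meaningful way.

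The paper works with the $V$-equation rather than the $C$-equation: it observes that $G(V(x),C(x)) = -\lambda V(x)(1+O(V))$ and $D = 1+O(V)$, so $V'(x) = \frac{V(x)}{x}(1+O(V))$, and then runs a two-step bootstrap---first obtaining rough bounds $|V(x)| \sim |x|^{1\pm\epsilon}$ by Gr\"onwall-type integration, then feeding these back in to upgrade the error to $O(x^{1-2\epsilon})$ and conclude that $V(x)/x$ has a finite limit. This gives $V \in C^1$ at $x=0$, and $C(x) = C(V(x))$ follows.

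Your route is cleaner: by inverting to treat $C$ as the independent variable and substituting $y = x/C$, you convert the singular equation into a \emph{regular} linear ODE $y' = y\,g(C)$ with $g$ analytic, solve it explicitly, and invoke the inverse function theorem. The key computational input---that $f_2(0) = a_1 - a_2 + a_3 = 1 + m\gamma z = \lambda$---plays exactly the same structural role as the paper's observation that $g_2(V) = \lambda V + O(V^2)$: both ensure that the $\lambda$ in the denominator of~\eqref{ODE system} cancels against the leading coefficient, leaving $\text{var}'(x) = \text{var}/x\cdot(1+o(1))$. What your approach buys is analyticity for free and no need for the bootstrap; what the paper's approach buys is a slightly more self-contained argument that avoids invoking the inverse function theorem on an implicitly defined $x(C)$.
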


\begin{proof}
Recall from \cite[Remark 2.10]{JLS23} that the solution $(V(x),C(x))\to(0,0)$ as $x\to 0$. Moreover, from Lemmas~\ref{analytic}--\ref{pass collapse}, we can write $C(V) = c_1 V+O(V^2)$. Thus, from the explicit forms of $G$ and $D$ in~\eqref{D(V,C)}--\eqref{G(V,C)}, we see that, for $x$ near 0, 
\begin{equation}\label{exp:GDnearx=0}
G(x)=C(x)^2 g_1(x) - g_2(x) = -\lambda V(x) (1 + O(V(x))),\qquad D=1+O(V(x)).
\end{equation}
So then, recalling that as $x\to0^-$, $V(x)\to 0$ from below monotonically, we have
\begin{equation*}
V'(x)=-\frac{1}{\lambda x}\frac{G(x)}{D(x)} = \frac{V(x)(1+O(V))}{x},
\end{equation*}
and hence, given $\epsilon\in(0,\frac14)$, for $x<0$ sufficiently small,
$$\frac{(1-\epsilon)V(x)}{x}\leq V'(x)\leq \frac{(1+\epsilon)V(x)}{x},$$
which yields, after integration,  bounds on $V$ of
$$\frac1C(-x)^{1+\epsilon} \leq |V(x)| \leq C(-x)^{1-\epsilon}.$$
Returning to~\eqref{exp:GDnearx=0}, we insert this back into the expansions of $G$ and $D$, and find
$$V'(x)=V(x)/x +O(x^{1-2\epsilon}),$$
 which can be integrated to obtain a limit for $\frac{V(x)}{x}$, and hence to deduce that $V\in C^1$ up to $x=0$. It follows then from the analytic form $C(x)=C(V(x))$ that $C$ is also $C^1$.
\end{proof}

\section{Maximal extension}\label{sec:maximal}

The goal of this section is to analyse the maximal smooth extension of the flow through the origin. In particular, we will show first that the trajectory continues monotonically in the fourth quadrant (that is, the region $\{(V,C)\mid V>0, C<0\}$) until meeting the curve $V=V_G^+(C)$, or equivalently $C = C_G^+(V)$. It then extends further until intersecting the lower sonic line $\{C=-(1+V)\}$ in either the  fourth or third quadrants ($\{(V,C)\mid V<0, C<0\}$). It is essential for the later analysis of Section~\ref{sec:jump} that we show that this intersection occurs at a point $(V_s,C_s)$ with $C_s<\Cg$. In particular, the flow cannot continue smoothly through $(V_s,C_s)$, and thus we  obtain the maximal extension of the smooth solution.

Our first preparatory  lemma ensures intersection of the solution curve and the curve $V=V_G^+(C)$. This follows by a contradiction argument and the use of a suitable upper bound of the solution curve.

\begin{lemma}\label{intersect G=0}
Let $P_*$ be $P_6$ or $P_8$. For any fixed $\gamma\in\Gamma(P_*) $ and $z\in\mathring{\mathcal{Z}}(\ga;P_*)$,  the maximal smooth extension of the solution given by Lemma \ref{pass collapse} must intersect $G=0$ at some point in the fourth quadrant. As a consequence, there exists $C_s(\ga,z)<0$ such that the solution curve approaches the sonic line $C=-V-1$ at $(-C_s-1,C_s)$.
\end{lemma}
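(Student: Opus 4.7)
The plan is to analyze the trajectory in three phases — entry into the fourth quadrant, forced intersection with $\{G=0\}$, and the subsequent approach to the sonic line — with the main technical work being the second phase.

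By Lemma \ref{pass collapse} and Theorem \ref{analytic}, the trajectory continues analytically through $P_0=(0,0)$ with
\begin{equation*}
V(C)=v_1 C+v_2 C^2+O(C^3),\qquad v_1\in\big[1/(2s(\gamma,z;P_*)),0\big),\quad v_2<0,
\end{equation*}
so $V(C)>0$ for small $C<0$ and the trajectory enters the fourth quadrant. Simultaneously, since $\partial_V G(0,0)=-\lambda$ and $\partial_C G(0,0)=0$, the implicit function theorem applied to $G=0$ at the origin yields $V_G^+(C)=(2mz/\lambda)C^2+O(C^3)$. Comparing the two expansions, the trajectory lies strictly in the region $\mathcal R:=\{V>V_G^+(C),\,C<0,\,D>0\}$ for $|C|$ small. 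By Lemma \ref{L:FG=0}, this is the region where $G<0$ and $F>0$; hence \eqref{ODE system} gives $dV/dx>0$, $dC/dx<0$, so $V$ increases and $C$ decreases monotonically along the trajectory.

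The core of the proof is to rule out that the trajectory remains in $\mathcal R$ for all $x$. Suppose for contradiction it does; then $V(C)$ is a strictly monotone decreasing function of $C\in(C_\infty,0)$ with $V(C)>V_G^+(C)$. Since by Lemma \ref{L:FG=0} we have $V_G^+(C)\to+\infty$ as $C\to-\infty$, with leading behavior $V_G^+(C)/|C|\to\sqrt{m+1}$ (from the dominant balance $(m+1)C^2 V\sim V^3$ in $G=0$), it suffices to construct an upper barrier $V\le B(C)$ with $B(C)<V_G^+(C)$ for $|C|$ sufficiently large, which will directly contradict $V>V_G^+$. I would take $B$ of the form $B(C)=-aC$ with $|v_1|<a<\sqrt{m+1}$ (the initial bound $v_1\ge 1/(2s(\gamma,z;P_*))$ ensures this interval is non-empty in the relevant parameter ranges, and $V(0^-)\le B(0^-)$ at $C=0$ is automatic). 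One then needs to verify the barrier differential inequality
\begin{equation*}
G(B(C),C)-B'(C)F(B(C),C)>0\qquad\text{on }\{V=B(C),\ C<0\},
\end{equation*}
which, since $F>0$, propagates $V\le B(C)$ forward as $C$ decreases. This reduces to a polynomial sign condition in $C$ and in the parameters $\gamma,z,m,a$; verifying it is the main technical obstacle, and is expected to proceed via the Fourier--Budan sign analysis used elsewhere in the paper, possibly after splitting the parameter ranges $\gamma\in\Gamma(P_*)$ and $z\in\mathring{\mathcal Z}(\gamma;P_*)$ into subcases (e.g. $P_*=P_6$ vs. $P_*=P_8$).

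Once the intersection with $V_G^+$ is established at some $(V_G,C_G)$ with $C_G<0$, the trajectory crosses into $\{V<V_G^+(C)\}\cap\{D>0\}$, where $G>0$ and $F>0$ by Lemma \ref{L:FG=0}, so $dV/dx<0$ and $dC/dx<0$. A one-sided vector field check at $\{V=V_G^+\}$ rules out re-crossing, confining the trajectory to the bounded region enclosed by $V_G^+$, the vertical line $V=V_8$, and the lower sonic line $\{C=-(1+V)\}$. Since $F$ is bounded away from zero on this region while $D\to 0$ at the sonic line, $dC/dx=-F/(\lambda x D)$ blows up there, forcing the trajectory to reach the sonic line in finite $x$ at some point $(V_s,C_s)=(-C_s-1,C_s)$ with $C_s<0$, which is the claimed consequence.
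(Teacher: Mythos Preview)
Your overall contradiction strategy matches the paper's, but the core step --- the linear barrier $B(C)=-aC$ --- has two genuine gaps.

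First, the claim that $v_1\ge 1/(2s)$ ensures the interval $(|v_1|,\sqrt{m+1})$ is non-empty is not justified: that bound gives only $|v_1|\le |1/(2s)|=\tfrac{2|V_*|}{(\gamma-1)C_*}$, which at $z=z_M$, $P_*=P_6$ equals $\tfrac{2\sqrt2}{(\gamma-1)\sqrt\gamma}$ and is arbitrarily large as $\gamma\to 1^+$. So for much of the parameter range there may be no admissible $a$. Second, even granting such an $a$, the barrier inequality $G(-aC,C)+aF(-aC,C)>0$ is only asserted. A leading-order expansion on the line $V=-aC$ near the origin shows the linear-in-$C$ contributions cancel exactly (because $\lambda=1+m\gamma z$), so the sign is determined by higher-order terms; this is not a routine Fourier--Budan check and you give no mechanism for handling it uniformly in $(\gamma,z,m)$.

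The paper sidesteps both issues by a different and much cleaner comparison. Working with $C$ as a function of $V$, it uses the exact factorization
\[
\frac{C^2 f_1-f_2}{C^2 g_1-g_2}-\frac{f_2}{g_2}
=-\frac{m[(m+1)(\gamma-1)+2]}{2}\,\frac{(V-V_4)(V-V_6)(V-V_8)\,C^2}{(C^2 g_1-g_2)\,g_2},
\]
which is positive along the trajectory in the fourth quadrant (since $V>0>V_4,V_6,V_8$, $g_2>0$, $G<0$). This gives $\tfrac{d}{dV}\ln(-C)>\tfrac{f_2}{g_2}$, and the right-hand side integrates explicitly by partial fractions to produce the bound
\[
C(V)<\Psi(V)=-M\,V\,(1+V)^{\frac{1-\gamma}{2}}(1+m\gamma z+V)^{\frac{(m+1)(\gamma-1)}{2}}.
\]
For large $V$, $\Psi(V)\sim -M\,V^{\,1+m(\gamma-1)/2}$ is superlinear while $C_G^+(V)\sim -V/\sqrt{m+1}$ is linear, so $\Psi$ dips below $C_G^+$, forcing $C(V)<C_G^+(V)$ and contradicting the assumed inequality. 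No barrier constant, no slope condition on $v_1$, and no polynomial sign verification are needed.

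For the final claim, your ``bounded region'' description is not correct: the strip between $V_G^+$ and the sonic line is unbounded as $C\to-\infty$, and there is no reason \emph{a priori} to invoke $V=V_8$. The correct (and simpler) argument, which the paper gives, is that after crossing $V_G^+$ one has $V'(C)=G/F>0$, hence both $V$ and $C$ decrease in $x$; since the sonic line $V=-C-1$ tends to $+\infty$ as $C\to-\infty$ while $V$ is decreasing from its value at the crossing point, the trajectory must meet the sonic line at a finite $C_s<0$.
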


\begin{proof}
We argue by contradiction.
Suppose the solution trajectory does not meet with the curve  $C = C_G^+(V)$, and so the solution trajectory $C(V)$ remains above the curve  $C = C_G^+(V)$ for all $V>0$. By Lemma \ref{L:FG=0}, we therefore have $G(V,C)<0$, $F(V,C)>0$ along the solution trajectory, so that also $C'(V)<0$.  Recalling \cite[Lemma 5.1]{JLS23}, we compute
\begin{align*}
    \frac{C^2f_1-f_2}{C^2g_1-g_2}-\frac{f_2}{g_2} =\cfrac{C^2(f_1g_2-f_2g_1)}{(C^2g_1-g_2)g_2}
    =-\frac{m[(m + 1)(\gamma- 1) + 2]}{2}\frac{(V-V_4)(V-V_6)(V-V_8)C^2}{(C^2g_1-g_2)g_2} >0
\end{align*}
where we used $\ga>1$, $G(V,C)<0$, $g_2(V)>0$ and $V>0>V_4,V_6,V_8$ in the last inequality.

Thus, given $V_0>0$, for $V>V_0$, we have 
\begin{align}
 \ln (-C(V)) = \int_{V_0}^V \cfrac{C^2f_1-f_2}{C^2g_1-g_2} dV +\ln (-C(V_0)) > \int_{V_0}^V \cfrac{f_2}{g_2} dV +\ln (-C(V_0)) =: \psi(V).
\end{align}
Therefore, we obtain a upper bound for the solution $C(V)$ in the interval $V\in(V_0,+\infty)$ of
\begin{align*}
C(V) < -e^{\psi (V)}=:\Psi(V).
\end{align*}
A direct computation gives
\begin{align}
	\Psi(V) = -MV(1+V)^{\frac{1-\ga}{2}}(1+m\ga z+V)^{\frac{(m+1)(\ga-1)}{2}},
\end{align}
where $M = |C(V_0)|V_0^{-1}(1+V_0)^{\frac{\ga-1}{2}}(1+m\ga z+V_0)^{\frac{(m+1)(1- \ga)}{2}}$ is a positive constant. On the other hand, we notice that the curve $C = C_G^+(V)$  in the 4th quadrant is explicitly given by
 \begin{align}
 	\left\{C=C_G^+(V) = -\sqrt{\frac{V(1+V)(1+m\ga z+V)}{(m+1)V+2mz}}\right\}.
 \end{align}
 For $V$ large enough, we observe that $C_G^+(V)$ behaves as a linear function in $V$, while $\Psi(V)$ has superlinear growth. Thus, $\Psi(V)$ intersects $C = C_G^+(V)$ for some $V>V_0$, a contradiction as $\Psi(V)$ is an upper bound of the solution curve $C(V)$.
 
 To show the second claim of the Lemma, note first that, by Lemma~\ref{L:FG=0}, after the trajectory passes through the curve $\{C=C_G^+(V)\}$, it can be parametrised as $(V(C),C)$, with $V'(C)>0$. Thus, there are no stationary points of the flow between the lower sonic line and $\{C=C_G^+(V)\}$, and so it is clear that as $C$ decreases, the flow must converge to the lower sonic line at some point $(-C_s-1,C_s)$. 
\end{proof}

In the subsequent subsections, we perform a separate analysis of the solution trajectories originating from $P_6$ and $P_8$, representing them in the form  $V(C)$. Our specific objective is to establish that these solution trajectories are bounded below (as functions $V(C)$) by precise lower barriers within the interval $C\in(0,\Cg)$ and therefore to justify that $C_s<\Cg$.

\subsection{Continuation of trajectories from $P_8$}
We begin by showing $C_s<C_9$ in the case  $P_*=P_8$.
\begin{lemma}\label{barrier P8}
For any $\ga\in \Gamma(P_8)$ and $z\in \mathring{\mathcal{Z}}(\ga;P_8)$, the solution, as guaranteed by Lemma \ref{intersect G=0}, approaches the sonic line $C=-V-1$ at some point $(-C_s-1,C_s)$ with  $C_s<C_9$.
\end{lemma}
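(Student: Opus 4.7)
My plan is to force $V(C_9) > V_8$; since the sonic line $C = -(V+1)$ passes through $P_9 = (V_8, C_9)$, this immediately forces the intersection point $C_s < C_9$ via the monotonicity established in Lemma~\ref{intersect G=0}. In the relevant portion of the trajectory, past the curve $\{V = V_G^+(C)\}$ from Lemma~\ref{L:FG=0}(ii), we have $F, G > 0$ on $\mathcal S_U \cap \{V \geq V_8\}$, so $V'(C) = G/F > 0$ and the trajectory is strictly monotone in $C$, making a lower barrier argument meaningful.

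The natural barrier candidate is the chord
\[
B(C) = \kappa C, \qquad \kappa = \frac{-V_8}{1+V_8} > 0,
\]
joining $P_0$ to $P_9$ and lying strictly above the sonic line on $(C_9,0)$ (they meet only at $P_9$). By Proposition~\ref{lower bound of v1} and Lemma~\ref{pass collapse}, the trajectory leaves the origin with slope $v_1 \in [1/(2s(\gamma, z; P_8)), 0]$, so $v_1 \leq 0 < \kappa$ and $V(C) > \kappa C$ for $C$ slightly negative. The crux is then to verify the strict slope inequality
\[
G(\kappa C, C) - \kappa F(\kappa C, C) < 0, \qquad C \in (C_9, 0),
\]
so that a standard comparison propagates $V > B$ to the whole interval. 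Expanding via \eqref{G(V,C)}--\eqref{g1g2f1f2}, the left-hand side is polynomial in $C$; using $f_2(0) = 1 + m\gamma z = \lambda$ from \eqref{a_1234&z}, the coefficient of $C^1$ vanishes (so $C^2$ divides), and $(C - C_9)$ divides because $F = G = 0$ at the triple point $P_9$. It remains to analyze the sign of a low-degree residual factor.

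The principal obstacle is exactly this sign verification, which must be uniform in $\gamma \in (\gaSix, 3]$ and $z \in \mathring{\mathcal Z}(\gamma; P_8)$. I would employ the Fourier--Budan theorem, the tool the paper highlights for such arguments, subdividing the parameter range if necessary at the breakpoints $\gamma = 1 + \sqrt 2$ and $\gamma = 2$ that appear in the definition of $\mathring{\mathcal Z}(\gamma;P_8)$. If the straight chord fails on some sub-range, I would refine $B$ either by a quadratic correction $B(C) = \kappa C + \mu C(C - C_9)$ cancelling the offending term at $P_9$, or by splicing $\kappa C$ with a tangent to one of the zero curves $V = V_F^+(C)$, $V = V_G(C)$ of Lemma~\ref{L:FG=0} on a short sub-interval. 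Once the barrier inequality is secured, continuity gives $V(C_9) \geq V_8$; the strict inequality $V(C_9) > V_8$ then follows by applying the same argument to the slightly tilted chord $B_\delta(C) = \kappa(1-\delta) C$ (whose endpoint $V_8(1-\delta) > V_8$, and for which the sign condition persists uniformly for small $\delta > 0$ by continuity and compactness), or equivalently by a local linearization at the triple point $P_9$ excluding the chord direction as an admissible approach. This delivers $C_s < C_9$, as claimed.
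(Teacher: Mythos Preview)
Your barrier-from-scratch plan misses a much simpler argument that the paper uses. The ODE \eqref{ODE} is invariant under $C\mapsto -C$ (since $F$ is odd and $G$ is even in $C$), so the upper-half-plane barriers $C=B_1(V)=\sqrt{-V}$ and $C=B_{3/2}(V)=\sqrt{-\tfrac32 V}$ from Theorem~\ref{collapsingthm}(iii)--(iv) reflect to the lower-half-plane barriers $V=-C^2$ and $V=-\tfrac23 C^2$ with the same one-sided crossing property. The paper's proof is therefore five lines: assume $C_s\geq C_9$; then the continued trajectory, which starts with $V>0>-C^2$ near the origin, must cross the reflected parabola at some $C\geq C_9$, and this is exactly the forbidden direction by symmetry. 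No new sign computation is needed---the entire content was already established in \cite{JLS23}.

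By contrast, your proposal introduces a new linear barrier $B(C)=\kappa C$ and defers the crucial inequality $G(\kappa C,C)-\kappa F(\kappa C,C)<0$ to an unperformed Fourier--Budan analysis, with explicit hedging (``If the straight chord fails on some sub-range, I would refine~$B$\ldots''). This is a proof outline, not a proof: the residual factor after extracting $C^2(C-C_9)$ still depends nontrivially on $(\gamma,z)$ across the whole range $(\gamma_\star,3]\times\mathring{\mathcal Z}(\gamma;P_8)$, and you have not checked that it has the required sign. Moreover, because $f_1(V)=1+\tfrac{mz}{1+V}$ is not polynomial, the expression you want to analyze is only rational in $C$; you must first clear the $(1+\kappa C)$ denominator before any Fourier--Budan argument applies, and you have not accounted for this. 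The approach is not wrong in principle, but it is substantially harder than necessary and is left incomplete at exactly the point that would require work.
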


\begin{proof}
The existence of $C_s$ follows from Lemma~\ref{intersect G=0}. Suppose that $C_s\geq C_9$. Then, for $\ga\in(\gaSix,2)$, the solution must cross the curve  $V(C) = -C^2$, while for $\ga\in[2,3]$, it must cross $V(C) = -\frac{2}{3}C^2$. In either case, the crossing must occur with $C\geq C_9$. Then from Theorem \ref{collapsingthm} and the symmetry of the system around the $V$-axis, we arrive at a contradiction. 
\end{proof}

Recalling that we require the maximal smooth extension to meet the sonic line below $\mathring{C}$, it is necessary to extend the validation of the lower barrier $V(C) = -\frac{2}{3}C^2$ from the interval $C\in(C_9,0)$ to $C\in(C_5,0)$ for $\ga\in[\ga_g,3]$, where $\Cg = C_5$ and $z\in[z_g,z_M]$ (compare \eqref{P4}--\eqref{zg}).  Since $V(C) = -\frac{2}{3}C^2$ intersects $C=-(1+V)$ at $(\frac{\sqrt {33}-7}{4},-\frac{\sqrt {33}-3}{4})$, we initiate this process by establishing that $C_5>-\frac{2}{3}>-\frac{\sqrt {33}-3}{4}$.

\begin{lemma}\label{C4<2/3}
Let $m=1,2$. For any $\ga\in[\ga_g,3]$ and $z\in[z_g,z_M]$,
\begin{align*}
	C_5(z)>-\frac{2}{3}.
\end{align*}
\end{lemma}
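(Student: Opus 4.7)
The plan is to reduce the inequality $C_5>-\tfrac23$, equivalently $C_4^2<\tfrac49$, to an explicit polynomial inequality in $\gamma$ and $z$, and then to use monotonicity in $z$ to bring everything down to an endpoint check at $z=z_M$, to which the Fourier-Budan theorem can be applied.

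First I would use the symmetry $P_5=(V_4,-C_4)$ from \eqref{P4}, so that $C_5>-\tfrac23$ is equivalent to $C_4^2<\tfrac49$. Substituting the explicit $V_4=-\tfrac{2\lambda}{\gamma+1+m(\gamma-1)}$ together with $\lambda=1+m\gamma z$, I would simplify separately for $m=1$ and $m=2$. For $m=1$ a direct computation using $V_4=-\lambda/\gamma$ gives $(m+1)V_4+2mz=-2/\gamma$, $1+V_4=(\gamma-1-\gamma z)/\gamma$, and $\lambda+V_4=\lambda(\gamma-1)/\gamma$, so $C_4^2=\tfrac{(1+\gamma z)^2(\gamma-1)(\gamma-1-\gamma z)}{2\gamma^2}$. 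The analogous simplification for $m=2$ yields $C_4^2=\tfrac{3(1+2\gamma z)^2(\gamma-1)(3(\gamma-1)-4\gamma z)}{(3+2z)(3\gamma-1)^2}$.

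Next I would establish monotonicity of $C_4^2$ in $z$ on $(0,z_M]$ for $\gamma\in[\gamma_g,3]$. In the $m=1$ case the only $z$-dependent factor besides a positive constant is $h(z):=(1+\gamma z)^2(\gamma-1-\gamma z)$, whose derivative is $\gamma(1+\gamma z)(2\gamma-3-3\gamma z)$; since $\gamma_g>5/2$ implies $(2\gamma-3)/(3\gamma)>4/15$ while $z\le z_M<1/5<4/15$, one has $h'(z)>0$ throughout the range. An entirely parallel computation handles $m=2$. Consequently the desired inequality reduces to $C_4^2\big(\gamma,z_M(\gamma)\big)<\tfrac49$ for $\gamma\in[\gamma_g,3]$.

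Finally, I would substitute $z_M(\gamma)=1/(\sqrt2+\sqrt\gamma)^2$ and clear denominators. The resulting condition becomes a polynomial inequality in $\gamma$ and $\sqrt\gamma$, which after rationalization is a single-variable polynomial inequality on $[\gamma_g,3]$; since by hypothesis we have explicit knowledge of $\gamma_g$ from \eqref{zg}, one may apply the Fourier-Budan theorem to certify that the polynomial is of constant positive sign on this interval, exactly in the spirit of the other sign verifications used elsewhere in the paper. A slightly less sharp but more elementary alternative is to replace $z_M$ by its upper bound $\tfrac15$ (noted in \eqref{Id: VCzM}); for $m=1$ this converts the target into $9(\gamma-1)(5+\gamma)^2(4\gamma-5)<1000\gamma^2$, a quartic inequality in $\gamma$ to be verified on $[\gamma_g,3]$, and similarly for $m=2$.

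The main obstacle is the endpoint verification itself: while the inequality is comfortably satisfied numerically (e.g.\ $C_4^2\approx 0.32<4/9$ at $\gamma=3,\,m=1$), the presence of the irrational quantity $z_M(\gamma)=1/(\sqrt2+\sqrt\gamma)^2$ together with the two separate cases $m=1,2$ and the mildly delicate dependence on the precise value of $\gamma_g$ means the cleanest argument requires either the Fourier-Budan reduction or the slightly wasteful $z<\tfrac15$ simplification followed by careful polynomial bookkeeping.
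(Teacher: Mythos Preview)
Your overall strategy---reduce to $C_4^2<\tfrac49$, establish monotonicity in $z$, and check an endpoint---is sound and genuinely different from the paper's route. The paper instead rewrites the target as $p(V_4)>0$ for the cubic $p(V)=9V^3+9(2+m\gamma z)V^2+[5+m(9\gamma z-4)]V-8mz$, applies Fourier--Budan to $p$ on $[-1,0]$ (not to a polynomial in $\gamma$), shows $V_4\le-\tfrac25$ via monotonicity of $V_4$ in $z$ together with $z_g\ge\tfrac1{10}$, and then verifies $p(-\tfrac25)>0$ using the bound $z_M<\tfrac{1}{\gamma+2+2\sqrt5}$ (valid for $\gamma>\tfrac52$). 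The paper's argument treats $m=1,2$ uniformly through the single polynomial $p$ and never needs the explicit irrational $z_M$.

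There is, however, a concrete gap in your ``slightly wasteful'' alternative: the replacement $z_M\to\tfrac15$ \emph{fails} for $m=2$ near $\gamma=3$. Indeed, at $\gamma=3$, $m=2$, $z=\tfrac15$ one computes $V_4=-\tfrac{11}{20}$ and
\[
C_4^2=\frac{3(\gamma-1)(1+2\gamma z)^2[3(\gamma-1)-4\gamma z]}{(3+2z)(3\gamma-1)^2}
=\frac{3\cdot2\cdot(\tfrac{11}{5})^2\cdot\tfrac{18}{5}}{\tfrac{17}{5}\cdot 64}
=\frac{13068}{27200}\approx 0.480>\tfrac49.
\]
So the quartic you would obtain from $z=\tfrac15$ is simply not true on all of $[\gamma_g,3]$ when $m=2$; the crude bound $z<\tfrac15$ is too loose there, and a sharper upper bound on $z_M$ (such as the paper's $z_M<\tfrac{1}{\gamma+2+2\sqrt5}$, which exploits $\gamma>\tfrac52$) is genuinely needed. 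Your primary route through the exact $z_M(\gamma)$ can be salvaged, but as you note it drags in $\sqrt\gamma$ and forces a rationalization before Fourier--Budan; you should also be aware that the ``entirely parallel'' monotonicity claim for $m=2$ is not quite parallel---the quotient by $3+2z$ makes $\partial_z C_4^2>0$ a two-variable polynomial inequality in $(\gamma,z)$ rather than a one-line factorization as for $m=1$.
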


\begin{proof}
Recalling \eqref{P4}, we have
$V_4 = \frac{-2m\ga z-2}{(m+1)\ga+1-m}<\frac{-2m z}{m+1-\frac{m-1}{\ga}}<-\frac{2m z}{m+1}$. Hence, again from \eqref{P4}, we see that $C_5^2<\frac49$ is  	 equivalent to showing
	\begin{align*}
		 p(V_4):=9V_4^3+9(2+m\ga z)V_4^2+[5+m(9\ga z-4)]V_4-8mz >0.
		\end{align*}
	We first show that $p(V)$ only has one zero in the interval $V\in[-1,0]$. Applying the Fourier-Budan Theorem (see Theorem \ref{Fourier-Budan Theorem}), we check that the sequence of derivatives at $V=-1$ is $\{4(m+1-2mz), -4(m+1)-9m\ga z, -18+18m\ga z, 54\}$,  and the sequence at $V=0$ is $\{-8m z, 5-4m+9m\ga z,36+18m\ga z, 54\}$. Hence, the difference of the variations of signs at $V=-1$ and $V=0$ is 1. It follows that $p(V)$ only has one zero in the interval $V\in[-1,0]$. Given that $p(0)<0$ and $p(-1)>0$, we will prove that $V_4\le -\frac{2}{5}$ and $p(-\frac{2}{5})>0$. 
	Recalling \eqref{P4}, we have $\frac{dV_4(z)}{dz} = \frac{-2m\ga}{(m+1)\ga+1-m}<0$. Together with Lemma \ref{lower bound of zg}, it is enough to show that $V_4(\frac{1}{10})\le -\frac{2}{5}$, which follows directly from
	\begin{align*}
	V_4(\frac{1}{10})+\frac{2}{5}=\frac{-8+(m+2)\ga-2m}{5[(m+1)\ga+1-m]} \leq 0.
	\end{align*}
	Also, by using $z\leq z_M(\ga)<\frac{1}{\ga+2+2\sqrt{5}}$ for any $\ga\in[\ga_g,3]$, we obtain
	\begin{align*}
		p(-\frac{2}{5}) = \frac{38}{125}+\frac{m}{25}[40-(200+54\ga)z]>\frac{m(80\sqrt 5-120-14\ga)}{25(\ga+2+2\sqrt{5})}>0, 
	\end{align*}
	for any $\ga\in[\ga_g,3]$. Hence, $p(V_4(z))>0$ for any $\ga\in[\ga_g,3]$ and $z\in[z_g,z_M]$. This completes the proof.
\end{proof}

\begin{lemma}\label{barrier P5}
	Consider $P_*=P_8$. For any $\ga\in[\ga_g,3]$ and $z\in[z_g,z_M]$, the solution trajectory, as guaranteed by Lemma \ref{intersect G=0}, approaches the sonic line $C=-V-1$ at $(-C_s-1,C_s)$ with $C_s<C_5$.
\end{lemma}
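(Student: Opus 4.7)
The plan is to establish the parabolic lower barrier $V(C) > -\tfrac{2}{3} C^{2}$ along the trajectory for $C \in (C_5, 0)$, from which $C_s < C_5$ will follow by a short geometric observation. The parabola $V = -\tfrac{2}{3} C^{2}$ meets the lower sonic line $V = -(1+C)$ at $C = \tfrac{3-\sqrt{33}}{4}$, and by Lemma~\ref{C4<2/3} together with the elementary estimate $-\tfrac{2}{3} > \tfrac{3-\sqrt{33}}{4}$ (equivalently $3\sqrt{33} > 17$), we have $C_5 > -\tfrac{2}{3} > \tfrac{3-\sqrt{33}}{4}$. Thus, if the barrier holds on $(C_5, 0)$, any intersection of the trajectory with the sonic line at $C_s \geq C_5$ would force $-(1+C_s) \geq -\tfrac{2}{3} C_s^{2}$, i.e., $C_s \leq \tfrac{3-\sqrt{33}}{4}$, contradicting $C_s \geq C_5$.

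The barrier itself is established by a first-contact argument. Near the origin, the trajectory enters the fourth quadrant with $V(C) > 0 > -\tfrac{2}{3} C^{2}$, so it is initially above the parabola. Suppose for contradiction there is a largest $C^{\ast} \in (C_5, 0)$ at which $V(C^{\ast}) = -\tfrac{2}{3}(C^{\ast})^{2}$; note Lemma~\ref{barrier P8} already forces $C^{\ast} \le C_9$, so the new work is on the interval $(C_5, C_9]$ where, by \eqref{Pg}, $\Cg = C_5$. At the first contact, the trajectory crosses the parabola from above as $C$ decreases, whence
\[
V'(C^{\ast}) \ge V_b'(C^{\ast}) = -\tfrac{4}{3} C^{\ast}.
\]
Since $C^{\ast} \in (\tfrac{3-\sqrt{33}}{4}, 0)$, the contact point $(V_b(C^{\ast}), C^{\ast})$ lies in $\mathcal{S}_U \cap \{V \ge V_8\}$, so Lemma~\ref{L:FG=0} yields $F > 0$ there. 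Using $V'(C^{\ast}) = G/F$ and clearing denominators, the slope inequality becomes the polynomial condition
\begin{equation*}
\mathcal{H}(C;\ga,z) := G\!\left(-\tfrac{2}{3} C^{2},\, C\right) + \tfrac{4 C}{3}\, F\!\left(-\tfrac{2}{3} C^{2},\, C\right) \ge 0 \quad \text{at } C = C^{\ast}.
\end{equation*}
The contradiction is then secured by proving $\mathcal{H}(C;\ga,z) < 0$ strictly for all $C \in [C_5(\ga,z), C_9(\ga,z)]$, $\ga \in [\ga_g, 3]$, and $z \in [z_g, z_M]$.

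The main technical obstacle is verifying this uniform strict sign on the three-parameter region. My approach would mirror the method of Lemma~\ref{C4<2/3}: first exploit monotonicity of $C_5(z)$, $C_9(z)$, $V_8(z)$ in $z$ and the explicit formulae \eqref{P6}, \eqref{P8}, \eqref{P4}, \eqref{zg} to reduce the problem to a small number of boundary cases at $z \in \{z_g, z_M\}$ and $\ga \in \{\ga_g, 3\}$; then apply the Fourier--Budan theorem to the polynomial $C \mapsto \mathcal{H}(C;\ga,z)$ on $[C_5, C_9]$ to count sign variations and reduce to finitely many endpoint evaluations. Since $\mathcal{H}$ involves only the explicit quartic $g_2$, the quadratic $g_1$, and the quadratic $f_2$ composed with the parabola, this polynomial has bounded degree in $C$, making the Fourier--Budan count tractable. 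Once $\mathcal{H} < 0$ is secured, the barrier extends across $(C_5, 0)$, and the geometric argument of the first paragraph yields $C_s < C_5$.
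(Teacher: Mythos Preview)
Your overall strategy—use the parabolic lower barrier $V=-\tfrac{2}{3}C^2$, run a first-contact argument, and reduce to the sign of $\mathcal{H}(C)=G(-\tfrac{2}{3}C^2,C)+\tfrac{4C}{3}F(-\tfrac{2}{3}C^2,C)$—is exactly what the paper does, and the geometric conclusion in your first paragraph is correct.

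The gap is in the verification of $\mathcal{H}<0$. You propose to reduce the three-parameter problem to boundary cases in $(\ga,z)$ via ``monotonicity of $C_5(z),C_9(z),V_8(z)$'' and then apply Fourier--Budan in $C$. But monotonicity of those endpoints does not give monotonicity of $\mathcal{H}(C;\ga,z)$ in $z$ or $\ga$, so the reduction to corners is unjustified; and Fourier--Budan only bounds the number of roots from above, so even at fixed $(\ga,z)$ it does not by itself deliver a strict sign. As written this is a plan, not a proof.

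The paper closes the estimate by an algebraic trick you are missing. Multiply $\mathcal{H}$ by the positive factor $1-\tfrac{2}{3}C^2$ (which is $1+V$ along the barrier); this extracts $\tfrac{C^2}{3}$ and leaves an explicit cubic $\Barrier(\xi)$ in the single variable $\xi=\tfrac{2}{3}C^2\in(0,\tfrac{2}{3}C_5^2]$, with coefficients polynomial in $(\ga,z)$. One then shows $\Barrier'(\xi)>0$ on $(0,\tfrac{8}{27}]$ by elementary inequalities (using $\xi<\tfrac{2}{3}C_5^2<\tfrac{8}{27}$ from Lemma~\ref{C4<2/3}), so the whole thing collapses to the single endpoint check $\Barrier(\tfrac{8}{27})<0$, which is then handled as a polynomial inequality in $\ga$ after inserting $z\le z_M$. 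This monotonicity-in-$\xi$ plus one-endpoint argument replaces your proposed two-parameter case analysis and is what makes the sign condition tractable uniformly in $(\ga,z)$.
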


\begin{proof}
	We will show that the $V(C)=-\frac{2}{3}C^2$ is a lower barrier for the solution curve from $P_8$ in the interval $C\in[C_5,0)$. By employing the barrier argument, it is enough to show that
\begin{align*}
	\frac{d}{dC} (V + \tfrac{2}{3}C^2) \Big | _{( -\frac{2}{3}C^2 ,C)}= \frac{G(-\frac{2}{3}C^2,C;\ga,z)}{F(-\frac{2}{3}C^2,C;\ga,z)}+\frac{4}{3}C <0 
\end{align*}
for any  $\ga\in[\ga_g,3]$, $z\in[z_g,z_M]$ and $C\in[C_5,0)$. 
Since $F(-\frac{2}{3}C^2,C;\ga,z)>0$ and $1-\frac{2}{3}C^2>0$ for any $C\in[C_5,0)$, $z\in[z_g,z_M]$ and $\ga\in[\ga_g,3]$, this is equivalent to showing that
\begin{align}
	\Big[G(-\frac{2}{3}C^2,C;\ga,z)+\frac{4}{3}CF(-\frac{2}{3}C^2,C;\ga,z) \Big]\Big(1-\frac{2}{3}C^2\Big)<0. 
\end{align}
A direct computation shows that
\begin{align*}
	\Big[G(-\frac{2}{3}C^2,C;\ga,z)+\frac{4}{3}CF(-\frac{2}{3}C^2,C;\ga,z) \Big]\Big(1-\frac{2}{3}C^2\Big) = \frac{C^2}{3} \Barrier(\frac{2}{3}C^2),
\end{align*}
where
\begin{align*}
	\Barrier(\xi) =&\, 2[m(\ga-1)+1]\xi^3+[7m-9-4m\ga+2m(\ga-2)\ga z]\xi^2-[5m-9-2m\ga+2m(\ga-3)\ga z]\xi\\
	&-2+2m(3-\ga)z.
\end{align*}
Our goal is to show that $\Barrier(\xi)<0$ for any  $\ga\in[\ga_g,3]$, $z\in[z_g,z_M]$ and $\xi\in(0,\frac{2}{3}C_5^2]$. We compute
\begin{align*}
	\Barrier'(\xi) &= 6[m(\ga-1)+1]\xi^2+2[7m-9-4m\ga+2m(\ga-2)\ga z]\xi+9-5m+2m\ga+2m(3-\ga)\ga z \\
	&>4m(\ga-1)(1-\xi)^2+m+3-2m\ga\\
	&>\frac{16}{9}m(\ga-1)+m+3-2m\ga = -\frac{2}{9}m\ga-\frac{7m}{9}+3>0,
\end{align*}
for any $\ga\in[\ga_g,3]$ and $z\in[z_g,z_M]$, where we have used $0<\xi\leq\frac23 C_5^2<\frac{8}{27} (<\frac13)$ from Lemma \ref{C4<2/3} in the second inequality. Hence, it is sufficient to check $\Barrier(\frac{8}{27})<0$ by Lemma \ref{C4<2/3}. By direct computation,
\begin{align*}
	\frac{1}{2}\Barrier(\frac{8}{27}) &= m[(3-\frac{209\ga}{729}-\frac{152\ga^2}{729})z+\frac{2888\ga-9044}{19683}]-\frac{703}{19683}\\
	&<m[(3-\frac{209\ga}{729}-\frac{152\ga^2}{729})z_M+\frac{2888\ga-9044}{19683}]-\frac{703}{19683}<0,
\end{align*}
where we have used $z\leq z_M$ and $3-\frac{209\ga}{729}-\frac{152\ga^2}{729}>0$ for any $\ga\in[\ga_g,3]$ in the first inequality, and the negativity of the identity is shown in Proposition~\ref{prop:poly} \eqref{Poly L 3.4}.
\end{proof}

\subsection{Continuation of trajectories from $P_6$}
When considering the solutions from $P_8$, we have implicitly employed the fact that the point $P_9$ remains away from the $C$-axis. When $P_*=P_6$, for $\gamma$ close to 1, $z_{{std}}$  approaches $z_g$ (which in turn approaches zero), and so $P_9$ is in close proximity to the $C$-axis. In this scenario, we do not anticipate the existence of a suitable lower barrier function in the third quadrant. Hence, it becomes crucial to distinguish between larger and smaller values of $z$ by employing distinct methods, and to understand the trajectory of the solution in the fourth quadrant before it possibly passes  through to the third quadrant. Referring to Theorem \ref{collapsingthm}, our analysis is confined to the range $\ga\in(1,2)$.

To quantify this distinction between smaller and larger values of $z$, we introduce a function $z_0(\ga)$, which provides an effective separation between the regime in which we may apply a barrier argument in the third quadrant (the case $z\geq z_0$), and the regime in which we apply a direct integration estimation procedure ($z<z_0$). The barrier function that we wish to employ for $z\geq z_0(\ga)$ is $V=V(C)=(k(\ga,z)+\epsilon)C^2$, where
$k(\ga,z) = V_8(\ga,z)(C_8(\ga,z))^{-2}$
and $\epsilon>0$ is sufficiently small and guarantees that $V(\Cg)>V_8$. In order to close the barrier argument for $C$ close to $C_9$, however, we find that we cannot take $z$ arbitrarily close to $z_g$ for the reasons outlined above, and so we design $z_0(\ga)$ such that the barrier argument closes and we have a tractable form for analysis. This motivates the definition
\begin{align}\label{z0}
z_0(\ga) = \frac{22-5\ga}{125},
\end{align}
compare Step 1 of the proof of Lemma~\ref{lower barrier when m=1,2} below. We then find (see Lemma \ref{z0=zg}) that there exists a unique $\gau\in(1,2)$ (in fact, $\gau<\frac85$) such that $z_0(\ga_u)=z_g(\ga_u)$. In particular, $z_0>z_g$ when $\ga\in(1,\gau)$, and $z_0< z_g$ when $\ga\in(\gau,2)$.
For notational convenience, we introduce
\begin{align}
	z_s(\ga) = \begin{cases}&z_0(\ga),\quad\text{when }\ga\in(1,\gau),\\
	&z_g(\ga),\quad\text{when }\ga\in[\gau,2). \end{cases}	
\end{align}
We then argue via  the barrier argument and a perturbation argument (to allow $\epsilon>0$ in the barrier function) for $\ga\in(1,2)$ and $z\in[z_s(\ga),z_M(\ga)]$, alongside a direct integration estimation argument for $\ga\in(1,\gau)$ and $z\in(z_g(\ga),z_0(\ga)).$ We initiate the analysis with the larger $z$.

\subsubsection{Continuation of trajectories from $P_6$ case I: $z\in[z_s,z_M]$}
Our objective is to show that the solution trajectory intersects $C=-(1+V)$ at $(-C_s-1,C_s)$  with $C_s<\Cg$. Consequently, to employ a barrier argument, the chosen lower barriers should satisfy $V(\Cg)>V_8$. As discussed above, a suitable candidate is given by $V=V(C)=(k(\ga,z)+\epsilon)C^2$, where
\begin{align}
k(\ga,z) = \frac{V_8(\ga,z)}{C_8^2(\ga,z)}\label{k(ga,z)}<0,
\end{align}
and $\epsilon>0$ is a sufficiently small number. The following lemma provides a quantitative property of $k(\ga,z)$, which will be instrumental to the later analysis.
\begin{lemma}\label{upper bound of k(ga,z)}
For any $\ga\in(1,2)$ and $z\in [z_s(\ga),z_M(\ga)]$,
 \begin{align*}
 	k(\ga,z)<-\frac{3}{10}.
 \end{align*}
\end{lemma}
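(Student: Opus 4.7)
The plan is to exploit the identity $C_8 = 1+V_8$ to reduce the claim to an algebraic condition on $V_8$ alone, use the monotonicity of $V_8$ in $z$ to reduce to a single choice of $z$, and then verify a polynomial inequality in $\gamma$ via the Fourier-Budan theorem. Since $C_8 = 1+V_8$, we have $k(\gamma,z) = V_8/(1+V_8)^2$, and so $k < -\tfrac{3}{10}$ is equivalent, after multiplying by the positive quantity $10(1+V_8)^2$, to the quadratic condition $3V_8^2 + 16V_8 + 3 < 0$. The roots of $3t^2+16t+3$ are $(-8\pm\sqrt{55})/3$, so setting $\bar V := (-8+\sqrt{55})/3 \in (-1,0)$, the required inequality reduces to the one-sided condition $V_8(\gamma,z) < \bar V$ (the other root $(-8-\sqrt{55})/3 \approx -5.14$ lies below the range of $V_8$).

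Next, by \eqref{ineq: V'6>0, V'8<0}, $V_8$ is strictly decreasing in $z$. From the definition of $z_s$ we have $z_s(\gamma) \geq z_0(\gamma) = (22-5\gamma)/125$ on the entire interval $\gamma \in (1,2)$ (by construction for $\gamma \in (1,\gamma_u)$ where $z_s = z_0$, and because $z_s = z_g > z_0$ for $\gamma \in [\gamma_u,2)$). Consequently, for $z \in [z_s(\gamma), z_M(\gamma)]$, we have $V_8(\gamma,z) \leq V_8(\gamma,z_0(\gamma))$, and it suffices to prove $V_8(\gamma,z_0(\gamma)) < \bar V$ for all $\gamma \in (1,2)$.

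The key step is to avoid the square root $w(z)$ entirely. By Vieta applied to the triple-point formulae \eqref{P6}--\eqref{w(z)}, the pair $(V_6,V_8)$ is the root set of the upward-opening quadratic $Q(V;\gamma,z) := V^2 + [1+(2-\gamma)z]V + 2z$, with $V_8$ the larger root. Since $V_6 \leq V_6(z_M) = -\sqrt{2}/(\sqrt 2+\sqrt\gamma) < -1/2 < \bar V$ on the full range, the condition $V_8 < \bar V$ is equivalent to $Q(\bar V; \gamma,z) > 0$. Using $3\bar V^2 = -16\bar V - 3$, one rewrites
\begin{align*}
Q(\bar V;\gamma,z) \;=\; \tfrac{1}{3}(-13\bar V - 3) + \bigl[2 + (2-\gamma)\bar V\bigr]\,z,
\end{align*}
which is linear in $z$ with positive slope (since $2+(2-\gamma)\bar V \in (\bar V+2,\,2)$ for $\gamma\in(1,2)$). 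Thus $V_8 < \bar V$ reduces to an explicit threshold $z > z^*(\gamma) = (13\sqrt{55}-95) / \bigl(3[\,8\gamma-10+(2-\gamma)\sqrt{55}\,]\bigr)$, and the lemma reduces to showing $z_0(\gamma) > z^*(\gamma)$ for $\gamma\in(1,2)$.

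Finally, substituting $z_0 = (22-5\gamma)/125$, clearing the positive denominator, and separating the rational part from the $\sqrt{55}$ terms, this last inequality takes the form $\mathcal{A}(\gamma) > \mathcal{B}(\gamma)\sqrt{55}$ for explicit quadratics $\mathcal A,\mathcal B$ in $\gamma$, both positive on $(1,2)$. Squaring yields a degree-four polynomial inequality $\mathcal A(\gamma)^2 - 55\,\mathcal B(\gamma)^2 > 0$ to be checked on $(1,2)$; this is precisely the type of sign verification handled by the Fourier-Budan theorem (Theorem \ref{Fourier-Budan Theorem}) or by Proposition \ref{prop:poly}. The main obstacle is computational rather than conceptual: the margin in $z_0 > z^*$ is modest (roughly a factor of $1.2$--$1.5$ across the interval), so the polynomial coefficients must be computed with care, and a Fourier-Budan count together with checking $\mathcal A^2 - 55\mathcal B^2$ and its first two derivatives at the endpoints $\gamma=1,2$ is needed to close the argument.
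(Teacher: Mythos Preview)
Your approach is correct and shares the same opening moves as the paper's proof: both write $k = V_8/(1+V_8)^2$, both use the monotonicity $\partial_z k < 0$ (equivalently $V_8'(z)<0$) to reduce to the smallest admissible $z$, and both end with a polynomial sign check in $\gamma$. The executions diverge after this point.

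The paper pushes the monotonicity one step further than you do: since $z_0(\gamma) = (22-5\gamma)/125$ is decreasing, $z_s(\gamma)\geq z_0(\gamma) > z_0(2) = 12/125$, so it suffices to check $k(\gamma, z_0(2)) < -3/10$ at the \emph{single} value $z = 12/125$. Substituting this fixed $z$ into the explicit formula for $V_8$ and computing $V_8 + \tfrac{3}{10}(1+V_8)^2$ directly yields a closed-form rational expression whose numerator is the simple quadratic $-72\gamma^2 - 2980\gamma + 335$, manifestly negative on $(1,2)$. No Fourier--Budan or squaring step is needed.

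Your route---keeping $z_0(\gamma)$ variable, then using Vieta to recast $V_8 < \bar V$ as $Q(\bar V;\gamma,z_0(\gamma))>0$ without ever writing $w(z)$---is a nice trick and entirely valid. The price is that you trade the $\gamma$-dependent radical $w(z_0(\gamma))$ for the constant $\sqrt{55}$, which then forces a squaring step and a genuine degree-four Fourier--Budan check at the end. So your argument is conceptually pleasant but computationally heavier than the paper's; the extra reduction to the fixed value $z_0(2)$ is what makes the paper's endgame nearly trivial. One small point: you should also note (or check) that $z_0(\gamma) < z_M(\gamma)$ on $(1,2)$ so that $V_8(\gamma,z_0(\gamma))$ is real---this is implicit in the paper's setup but worth stating in your version.
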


\begin{proof}
We first notice that, by using \eqref{ineq: V'6>0, V'8<0},
\begin{align}\label{k'(z)<0}
	\frac{\partial k(\ga,z)}{\partial z} = \frac{(1-V_8)V_8'(z)}{(1+V_8)^3}<0. 
\end{align}
Recalling also $z_s(\ga)\geq z_0(\ga)> z_0(2)$ for $\ga\in(1,2)$, it is sufficient to show that $k(\ga,z_0(2))<-\frac{3}{10}$. We directly compute and verify 
\begin{align*}
	(1+&V_8(z_0(2)))^2 [k(\ga,z_0(2))+\frac{3}{10}]=V_8(z_0(2))+\frac{3}{10}(1+V_8(z_0(2)))^2\\
	&=\frac{18 (-72 \ga^2 - 2980 \ga +335)}{5[-432\ga^2-13272\ga+155647+(36\ga+1553)\sqrt{144\ga^2-3576\ga+10201}]}<0
\end{align*}
for any $\ga\in(1,2)$.
\end{proof}

The following is the main result in this subsection. It establishes the existence of a barrier function that guarantees that the maximal extension coordinate $C_s<C_9$ when $\ga\in(1,2)$ and $z\in [z_s(\ga),z_M(\ga)]$.
\begin{lemma}\label{lower barrier when m=1,2}
Consider $P_*=P_6$ and $m=1,2$. For any $\ga\in(1,2)$ and $z\in [z_s(\ga),z_M(\ga)]$, there exists a sufficiently small $\epsilon>0$ such that for $\hat k\in [k(\ga,z),k(\ga,z)+\epsilon]$, the function $V(C)=\hat k C^2$ is a lower barrier for solution trajectories,  
as provided by Lemma \ref{pass collapse}, for $C\in[C_9,0)$.
\end{lemma}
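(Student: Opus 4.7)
The plan is a direct barrier comparison, following the template of Lemma~\ref{barrier P5}. Write $f(C) := V(C) - \hat k C^2$, where $V(C)$ denotes the relevant (third-quadrant) branch of the solution trajectory given by Lemmas~\ref{pass collapse} and~\ref{intersect G=0}. The initialization is immediate: throughout the fourth-quadrant part of the trajectory one has $V > 0 > \hat k C^2$, and the analytic expansion at $P_0$ from Lemma~\ref{pass collapse} gives $V \sim v_1 C$ with $v_1 < 0$, so $f > 0$ for $C$ just below $0$. By standard comparison, it then suffices to prove $f'(C) < 0$ at every $C \in [C_9, 0)$ at which $f(C) = 0$, i.e.\ on the barrier curve $V = \hat k C^2$.

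On this curve, a short check with $1 + \hat k C^2 + C$ shows that the curve lies strictly in $\mathcal S_U$ for $\hat k \in (k, k+\epsilon]$: the function $1 + \hat k C^2 + C$ has derivative $2\hat k C + 1 > 0$ on $C < 0$, equals $\epsilon C_9^2 > 0$ at $C = C_9$ for $\hat k = k+\epsilon$, and equals $1$ at $C = 0$; the limiting case $\hat k = k$ yields equality only at $C = C_9$, which is exactly why the $\epsilon$-perturbation is required. Together with the inclusion $V = \hat k C^2 \geq V_8$ on $C \in [C_9, 0]$, Lemma~\ref{L:FG=0}(i) then gives $F(\hat k C^2, C) > 0$, and $f'(C) < 0$ reduces to the single scalar inequality
\[
\mathcal A(C; \hat k, \gamma, z) := G(\hat k C^2, C) - 2\hat k C\, F(\hat k C^2, C) < 0,\qquad C \in [C_9, 0).
\]
Substituting $V = \hat k C^2$ into~\eqref{G(V,C)}--\eqref{F(V,C)} and multiplying by $(1 + \hat k C^2)$ to clear the rational term in $f_1$, then setting $\xi = C^2$, the problem becomes that of showing a polynomial $\mathcal P(\xi; \hat k, \gamma, z)$ has a definite sign on $\xi \in (0, C_9^2]$.

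The sign is then verified at the two endpoints and in the interior. At $\xi = 0$, using the identity $m\gamma z = \lambda - 1$ one finds that $\mathcal P(0)$ is a constant multiple of $2mz + \lambda \hat k$, whose sign is controlled by the upper bound $k(\gamma,z) < -3/10$ of Lemma~\ref{upper bound of k(ga,z)} together with $z \leq z_M < \tfrac{1}{5}$ and $\lambda > 1$. At $\xi = C_9^2$ the triple-point structure of $P_9 = (V_8, -C_8)$ (where both $F$ and $G$ vanish) forces $\mathcal P(C_9^2; k, \gamma, z) = 0$; the required strict sign at $\hat k = k + \epsilon$ must therefore come from the first-order term $\epsilon\, \partial_{\hat k}\mathcal P(C_9^2; k, \gamma, z)$, and it is precisely this coefficient computation that motivates the explicit form~\eqref{z0} of $z_0(\gamma)$ together with the split $z_s = z_0$ for $\gamma \in (1, \gau)$ and $z_s = z_g$ for $\gamma \in [\gau, 2)$. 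To rule out an interior zero of $\mathcal P$ on $(0, C_9^2)$ I would apply the Fourier--Budan theorem to the derivative sequence of $\mathcal P$ at $\xi = 0$ and $\xi = C_9^2$, exactly as in the proof of Lemma~\ref{C4<2/3}.

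The main obstacle is the endpoint computation at $\xi = C_9^2$: because of the degeneracy at $P_9$, strict positivity there must be read off from a first-order $\hat k$-expansion whose coefficient is a polynomial in $\gamma$ and $z$ that has to be signed uniformly across two distinct parameter regimes, with the specific form $z_0(\gamma) = (22 - 5\gamma)/125$ reverse-engineered so that this endpoint coefficient has the correct sign for $z \geq z_s(\gamma)$. Once this endpoint sign, the $\xi = 0$ sign, and the Fourier--Budan interior count are in hand, existence of the strip $\hat k \in [k, k + \epsilon]$ is a simple continuity consequence, since the coefficients of $\mathcal P$ depend smoothly on $\hat k$ and the endpoint inequalities are strict for $\hat k > k$.
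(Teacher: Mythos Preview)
Your overall framework matches the paper's: reduce to the sign of a single cubic $\Barrier(\hat k,\xi;m)$ (the paper uses $\xi=\hat k C^2\in[V_8,0)$ rather than $\xi=C^2$, but this is cosmetic), observe the endpoint degeneracy $\Barrier(k,V_8;m)=0$ coming from the vanishing of $F$ and $G$ at $P_9$, and get the strict sign for $\hat k>k$ from $\partial_{\hat k}\Barrier(\hat k,V_8;m)|_{\hat k=k}<0$. This derivative computation is indeed where the cutoff $z_0(\gamma)=(22-5\gamma)/125$ enters, and the paper carries it out explicitly (writing it as $q(z)w(z)+p(z)$ and splitting according to the sign of $q$).

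Where you diverge is in the interior and the second endpoint. The paper does \emph{not} check $\xi=0$ separately and does not invoke Fourier--Budan; instead it proves the monotonicity $\partial_\xi\Barrier(\hat k,\xi;m)<0$ directly on $[V_8,0)$. For $m=1$ this is a two-line estimate; for $m=2$ it goes through an intermediate claim that $\Barrier'$ is increasing in $\hat k$, then a discriminant bound using Lemma~\ref{upper bound of k(ga,z)}. Once monotonicity holds, $\Barrier(\hat k,\xi)\leq\Barrier(\hat k,V_8)<0$ on the whole interval, so neither a root count nor a separate $\xi=0$ check is needed.

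Your $\xi=0$ argument has a concrete gap. The quantity is $\Barrier(\hat k,0;m)=2mz+(1+m\gamma z)\hat k$, and the inputs you list---$k<-\tfrac{3}{10}$, $z<\tfrac15$, $\lambda>1$---are far too weak to force it negative: for $m=2$ they only give $2mz+\lambda\hat k<\tfrac45-\tfrac{3}{10}=\tfrac12$. The inequality is in fact true (the actual values of $|k|$ on $[z_s,z_M]$ are much larger than $\tfrac{3}{10}$; e.g.\ $k(\gamma,z_M)=-\sqrt2(\sqrt2+\sqrt\gamma)/\gamma$), but establishing it would require a sharper bound than Lemma~\ref{upper bound of k(ga,z)} supplies. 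The paper's monotonicity route sidesteps this entirely, since it never needs the sign at $\xi=0$.
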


\begin{proof}
By employing the barrier argument and using $F(\hat kC^2,C;\ga,z)>0$ by Lemma \ref{L:FG=0} and $1+\hat kC^2>0$ for any $C\in[C_9,0)$, $z\in[z_s,z_M]$ and $\ga\in(1,2)$, it is sufficient to show that
\begin{align}\label{barrier inequality}
	\Big[G(\hat kC^2,C;\ga,z)-2\hat kCF(\hat kC^2,C;\ga,z) \Big]\Big(1+\hat kC^2\Big)<0 .
\end{align}
By direct computation from \eqref{G(V,C)}--\eqref{F(V,C)}, we find
\begin{align}
	\Big[G(\hat kC^2,C;\ga,z)-2\hat kCF(\hat kC^2,C;\ga,z) \Big]\Big(1+\hat kC^2\Big)= C^2 \Barrier (\hat k,\hat kC^2 ;m),
\end{align}
where 
\begin{align}\label{eq: k}
\Barrier (\hat k, \xi;m) :=&[m(\ga-1)+1]\hat k\xi^3+[3+2m(\ga-1)+m(2-\ga)\ga z]\hat k\xi^2+(m-1)\xi^2\notag\\ &+[3+m(\ga-1)+m(3-\ga)\ga z]\hat k\xi+(m-1)\xi+2mz+(1+m\ga z)\hat k.
\end{align}
It is clear that $\xi=\hat k C^2$ takes values in the interval $[V_8,0)$ for $C\in[C_9,0)$. Our goal is to show $\Barrier (\hat k, \xi;m)<0$, and the strategy is the following: 
	\begin{enumerate}
		\item For $m=1,2$, we will first show that $ \Barrier (\hat k, V_8;m)<0$ in the interval $\hat k\in(k(\ga,z),k(\ga,z)+\epsilon]$ for sufficiently small $\epsilon>0$.
		\item For $m=1,2$, we will prove that $\frac{\partial \Barrier (\hat k, \xi;m)}{\partial \xi}<0$ for $\xi\in[V_8,0)$.
		\end{enumerate}

$\underline{\text{Step 1}}:$ Since it is straightforward to see $\Barrier (k(\ga,z), V_8;m)=0$ from the vanishing of $G$ and $F$ at $(V_8,C_8)$, the existence of such small and positive $\epsilon$ can be obtained by establishing the negativity of the following:
	\begin{align*}
	\frac{\partial \Barrier(\hat k,V_8;m)}{\partial \hat k}|_{\hat k = k(\ga,z)} 	=\frac{m}{2}\alpha_1(\ga,z)+\alpha_2(\ga,z),
	\end{align*}
	where
	\begin{align*}
		\alpha_1(\ga,z) &= (\ga-2)^4z^3+(2-\ga)(2\ga^2+\ga-2)z^2+(\ga^2-4\ga-4)z+1+[(\ga-2)^3z^2-(\ga^2+\ga+2)z+1]w(z),\\
		\alpha_2(\ga,z) &= (\ga-2)^3z^3+(2-\ga)(\ga+4)z^2-2z+[(\ga-2)^2z^2-2z]w(z).
	\end{align*}
	Since for any $\ga\in(1,2)$, $z<z_M<\frac{1}{5}$, it is clear that $\alpha_2<0$. When additionally $\alpha_1(\ga,z)\leq 0$, then clearly $\frac{\partial \Barrier(\hat k,V_8;m)}{\partial \hat k}|_{\hat k = k(\ga,z)}$ is negative for both $m=1,2$. If, on the other hand, $\alpha_1(\ga,z)>0$, then $\frac{\partial \Barrier(\hat k,V_8;1)}{\partial \hat k}|_{\hat k = k(\ga,z)}<\frac{\partial \Barrier(\hat k,V_8;2)}{\partial \hat k}|_{\hat k = k(\ga,z)}$. Hence, it is enough to check the following inequality holds for any $\ga\in(1,2)$ and $z\in[z_s,z_M]$:
	\begin{align}\label{Ineq: H'(m=2)}
	 q(z)w(z)+p(z):=\frac{\partial \Barrier(\hat k,V_8;2)}{\partial \hat k}|_{\hat k = k(\ga,z)} <0,
	\end{align}
	 where
	\begin{align*}
		p(z) &=(\ga-2)^3(\ga-1)z^3+2(2-\ga)(\ga^2+\ga+1)z^2+(\ga^2-4\ga-6)z+1,\\
		q(z) &= (\ga-2)^2(\ga-1)z^2-(\ga^2+\ga+4)z+1.
	\end{align*}
	By Lemma \ref{p(z)<0}, $p(z)<0$. If also $q(z)\leq 0$, the inequality \eqref{Ineq: H'(m=2)} is trivially satisfied. If $q(z)>0$, by using $w(z)<\frac12$ as shown in Lemma \ref{w(zs)<1/2}, it is enough to check $r(z):=q(z)+2p(z)<0$, where $r(z)$ is considered as 
	a polynomial in $z$ for each fixed $\ga$. 
	We compute
	\begin{align*}
		r'(z) = 6(\ga-2)^3(\ga-1)z^2-2(\ga^2-4)(3\ga+1)z+\ga^2-9\ga-16
		< \ga^2 -4\ga-16<0, 
	\end{align*}
	where we used $z\le z_M<\frac{1}{5}$. 
	Hence, $r(z)<r(z_s)$ for $z\in (z_s, z_M]$. For $\ga\in(1,\gau)$, by Proposition~\ref{prop:poly} \eqref{r(z0)<0}, we have $r(z_0)<0$. If $\ga\in[\gau,2)$, $z_s=z_g$. By using $r'(z)<0$ and Lemma \ref{z0=zg}, it is enough to verify $r(z_g)<r(z_0(\frac{159}{100}))<0$ for $m=1$ and $r(z_0(\frac{31}{20}))<0$ for $m=2$. The proofs of these two inequalities are presented in Proposition~\ref{prop:poly} \eqref{r(z0(1.59))<0} and Proposition~\ref{prop:poly} \eqref{r(z0(1.55))<0}, respectively. Hence, \eqref{Ineq: H'(m=2)} holds. This completes the proof of Step 1, and the existence of $\epsilon>0$ follows from a continuity argument.

$\underline{\text{Step 2}}:$ Let $\epsilon>0$ be given as in Step 1. Given that $ \Barrier (\hat k, V_8;m)<0$, we shall show $\Barrier' (\hat k, \xi;m) := \frac{\partial \Barrier(\hat k,\xi;m)}{\partial \xi}<0$. 
We treat $m=1$ and $m=2$ separately. When $m=1$, direct computation shows
\begin{align*}
	\Barrier' (\hat k,\xi;1)&= 3\ga \hat k\xi^2+2(1+2\ga +\ga z(2-\ga))\hat k \xi + (2+\ga+\ga z(3-\ga))\hat k\\
	&\leq \Big[2(- \xi^2+\xi+ 1) + \ga z(\ga-1)\Big]\hat k<0,
\end{align*} 
where we  have used  $\hat k<0$, $1<\ga< 2$ and $-1<V_8\leq \xi<0$, and $\frac{1-\sqrt{5}}{2}<\frac{-\sqrt{2}}{\sqrt{2}+\sqrt{\gamma}}\leq V_8\leq \xi<0$. 

In the case   $m=2$, we obtain
\begin{align*}
	\Barrier' (\hat k,\xi;2) 
	&=\Big[(4\ga-1)(1+\xi)^2+2(1-\ga)+2(\ga-1)\xi^2+2\ga z[2(2-\ga)\xi+3-\ga] \Big]\hat k+2\xi+1.
\end{align*}
We first claim that $\Barrier'(\hat k,\xi;2)$ is increasing in $\hat k$. Since it 
is linear in $\hat k$, we only need to check the positivity of the coefficient of $\hat k$. 
For any $\ga\in(1,2)$, $z\in[z_s,z_M]$ and $\xi\in[V_8,0)$, the coefficient satisfies 
\begin{align*}
	&(4\ga-1)(1+\xi)^2+2(1-\ga)+2(\ga-1)\xi^2+2\ga z[2(2-\ga)\xi+3-\ga \\
	&>(4\ga-1)C_8^2(z)+2(1-\ga)+2\ga z[2(2-\ga)V_8(z)+3-\ga]=:p(z). 
\end{align*}
Hence, it is enough to show $p(z)>0$. To this end, we will show that $p$ is concave on $(0,z_M]$, $p(0)>0$ and $p(z_M)>0$. 
By using $C_8'(z)=V_8'(z)<0$,  $V''_8<0$ and $C_8 C_8''(z)+(C'_8)^2<0$ by \cite[Lemma 6.1]{JLS23},  we deduce that 
\begin{align*}
	p''(z)=2(4\ga-1)[C_8 C_8''(z)+(C'_8)^2]+4\ga z(2-\ga)V_8''(z)+8\ga (2-\ga)V_8'(z)<0, 
\end{align*}
for any $z\in(0,z_M]$. 
Next, from \eqref{Id: VCzM},
\begin{align*}
	p(z_M) = (4\ga-1)C_8^2(z_M)+2(1-\ga)+2\ga z_M[2(2-\ga)V_8(z_M)+3-\ga] = \frac{4 + 4 \sqrt{2\ga}  - 5 \ga}{(\sqrt\ga+\sqrt 2)^2}>0
\end{align*}
for any $\ga\in(1,2)$. Moreover, by using $C_8(0)=1$ and $V_8(0)=0$, we obtain $p(0) = 2\ga+1>0$.
 Therefore, $p(z)>0$ for any $z\in (0,z_M]$ which implies that $\Barrier'(\hat k,\xi;2)$ is increasing in $\hat k$. Together with Lemma \ref{upper bound of k(ga,z)}, Lemma \ref{lower bound of zg} and \eqref{k'(z)<0}, we obtain an upper bound of $\Barrier'(\hat k,\xi;2)$: 
\begin{align*}
	\Barrier'(\hat k,\xi;2) < \frac{-9(2\ga-1)}{10} \xi^2+[2-\frac{3(4\ga-1+2(2-\ga)\ga z)}{5}]\xi-\frac{3(2\ga+1+2(3-\ga)\ga z)}{10}+1: = b(\xi).
\end{align*}
Now, in order to complete Step 2, it suffices to show $b(\xi)<0$ on $[V_8,0)$. We check the discriminant
\begin{align*}
\Delta
&=\frac{2}{25}\Big[18\ga^2(\ga-2)^2z^2+(-18\ga^2+33\ga-75)\ga z+18\ga^2-66\ga+53\Big]=:\frac{2}{25}q(z).
\end{align*}
We claim that $q(z)<0$. 
We observe that for $\ga\in (1,2)$, $z_0(2)< z_s < z_M <1$. 
As $q(z)$ is clearly convex in $z$, $q(z_0(2))<0$ is shown in Proposition~\ref{prop:poly} \eqref{L 3.7-1}, and $q(1)<0$ is shown in Proposition~\ref{prop:poly} \eqref{L 3.7-2}, the discriminant is negative, and hence $b(\xi)<0$. Consequently,
\begin{align*}
	\Barrier'(\hat k,\xi;2)<b(\xi)<0
\end{align*}
for any $\ga\in(1,2)$, $z\in[z_s,z_M]$ and $\xi\in[V_8,0)$. This concludes the proof.
\end{proof}

\subsubsection{Continuation of trajectories from $P_6$ case II: $z\in(z_g,z_s)$}
In this subsection, we consider $\ga\in(1,\gau)$ and $z\in(z_g,z_0)$. As discussed before,  due to the proximity of $P_9$ to the $V$-axis in this range of parameters, we must be 
careful in establishing suitable barriers for the flow after passing through the origin. 
We therefore 
seek a barrier in the fourth quadrant, 
rather than the third quadrant, that ensures that the flow passes below $C_9$ before approaching the sonic line. In order to achieve this, we employ the barrier function $B(C)=-C(1+C)$ in Lemma~\ref{m2 small ga and z}. However, to use this as a barrier function close to the origin, we require more precise control on the slope at which the solution trajectory passes through the origin. Our initial focus is therefore on estimating a quantitative lower bound on $\lim_{V\to 0^{-}}C'(V)$, which provides insight into the behavior of $V(C)$ around $P_0$. This lower bound will be demonstrated via an approximate integration argument of the ODE \eqref{ODE} for the solution trajectory in the second quadrant in Lemma~\ref{upper bound of sol}.

 We begin by establishing two useful quantitative properties.  
\begin{lemma}\label{phi(1)<phi(2)}
For any $\ga\in(1,\frac{8}{5})$ and $z\in(0,z_0(\ga))$,
\begin{align*}
	\Big(\frac{1+V_8+\ga z}{1+V_6+\ga z}\Big)^2<\Big(\frac{1+V_8+2\ga z}{1+V_6+2\ga z}\Big)^3.
\end{align*}
\end{lemma}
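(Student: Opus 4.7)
Set $a := 1+V_6$ and $b := 1+V_8$; by \eqref{ineq: V'6>0, V'8<0} we have $0 < a < b$. Taking logarithms, the claimed inequality is equivalent to
\begin{equation*}
3\bigl[\log(b+2\gamma z) - \log(a+2\gamma z)\bigr] - 2\bigl[\log(b+\gamma z) - \log(a+\gamma z)\bigr] > 0,
\end{equation*}
and applying $\log(b+c) - \log(a+c) = \int_a^b \frac{dt}{t+c}$ and combining the resulting integrands into a single fraction, this reduces to
\begin{equation*}
\int_a^b \frac{t - \gamma z}{(t+\gamma z)(t+2\gamma z)}\,dt > 0.
\end{equation*}
Since the denominator is positive throughout $[a,b]$ and $a<b$, it suffices to prove $a \geq \gamma z$.

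\textbf{The key step} is thus the bound $1+V_6 > \gamma z$, equivalently $V_6 > \gamma z - 1$. Using the explicit expression \eqref{P6} for $V_6$, this is equivalent to
\begin{equation*}
1 - (\gamma+2)z > w(z).
\end{equation*}
I would verify this in two elementary steps. First, expanding $w(z)^2$ via \eqref{w(z)} gives
\begin{equation*}
\bigl(1-(\gamma+2)z\bigr)^2 - w(z)^2 \;=\; 8\gamma z^2 \;>\; 0,
\end{equation*}
so $|1-(\gamma+2)z| > w(z)$. Second, the positivity $1-(\gamma+2)z > 0$ follows from $\gamma<8/5$ (so $\gamma+2 < 18/5$) together with $z < z_0(\gamma) < 17/125 < 5/18 < 1/(\gamma+2)$. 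Combining the two gives the required strict inequality, hence $a > \gamma z$, making the integrand strictly positive on $(a,b]$ and completing the argument.

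\textbf{Main obstacle} (or lack thereof). I do not anticipate a substantial technical obstacle: the proof compresses the original three-term algebraic inequality into the single sign condition $a > \gamma z$, which is a direct consequence of the explicit form of $V_6$. The parameter restrictions $\gamma < 8/5$ and $z < z_0(\gamma)$ serve only to guarantee $1-(\gamma+2)z > 0$ (and to place the lemma in the regime in which it is applied in Lemma~\ref{upper bound of sol}); the core algebraic identity $(1-(\gamma+2)z)^2 - w^2 = 8\gamma z^2$ indicates that the underlying inequality holds on a significantly larger parameter range, so there should be no delicate balancing required.
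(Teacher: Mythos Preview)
Your argument is correct, and it takes a genuinely different route from the paper. The paper proceeds by brute-force algebra: it expands the difference
\[
(1+V_8+\gamma z)^2(1+V_6+2\gamma z)^3-(1+V_6+\gamma z)^2(1+V_8+2\gamma z)^3
= \bigl[(5\gamma^2-4)z^2+(8-\gamma)z-4\bigr]\,w(z)\,\gamma^2 z^2,
\]
and then bounds the bracket using $z<z_M<\tfrac15$ to conclude negativity. Your approach instead passes to logarithms and rewrites the inequality as $\int_a^b \frac{t-\gamma z}{(t+\gamma z)(t+2\gamma z)}\,dt>0$, reducing everything to the single sign condition $a=1+V_6>\gamma z$, which you then verify via the identity $(1-(\gamma+2)z)^2-w^2=8\gamma z^2$ and the crude bound $z<z_0(\gamma)<\tfrac{17}{125}<\tfrac{1}{\gamma+2}$. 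Your method is more transparent about \emph{why} the inequality holds and makes it clear the result is robust (valid whenever $1+V_6>\gamma z$, a visibly larger range than the one stated); the paper's factorisation, on the other hand, exposes the exact algebraic dependence on $w(z)$, which is stylistically consistent with how the surrounding lemmas are handled. Either approach is fully adequate here.
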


\begin{proof}
By direct computation from~\eqref{P6}--\eqref{P8},
\begin{align*}
	&(1+V_8+\ga z)^2(1+V_6+2\ga z)^3-(1+V_6+\ga z)^2(1+V_8+2\ga z)^3\\
	 &= [(5\ga^2-4)z^2+(8-\ga)z-4]w(z)\ga^2 z^2<\Big(\frac{5\ga^2-4}{25}+\frac{8-\ga}{5}-4\Big)w(z)\ga^2 z^2<0,
\end{align*}
where we have used that $5\ga^2-4>0$, $8-\ga>0$ and $z_0<z_M<\frac{1}{5}$  for any $\ga\in(1,\frac{8}{5})$ in the first inequality.
\end{proof}

\begin{lemma}\label{upper bound CV8}
Consider $P_*=P_6$ and $m=1,2$. For any $\ga\in(1,\gau)$ and $z\in(z_g(\ga),z_0(\ga))$, the solution curve $C=C(V)$ given by Theorem \ref{collapsingthm} satisfies
\begin{align}\label{upper bound of C(V8)}
	C(V_8)<\frac{2C_6}{3}, 
\end{align}
in the second quadrant ($\{(V,C)\mid V<0, C>0\}$).
\end{lemma}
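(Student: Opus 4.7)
Proof proposal.

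My plan is to establish $C(V_8) < \tfrac{2}{3}C_6$ by an upper-barrier argument for the solution curve in the second quadrant on the interval $V \in [V_6, V_8]$. First, note that $V_6 < V_8 < 0$ (from \eqref{P6}--\eqref{P8} since $w(z)>0$), and along this arc the trajectory is monotone with $C$ decreasing as $V$ increases: after passing through $P_6$, the trajectory lies below the upper sonic line $C=1+V$, so by Lemma~\ref{L:FG=0} we have $F>0$ and $G<0$. Hence $C(V_8)<C_6$ is automatic, and the task is to strengthen this to the quantitative bound with the factor $\tfrac{2}{3}$.

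I would construct an explicit upper barrier $B(V)$ on $[V_6, V_8]$ satisfying $B(V_6) = C_6$ and, crucially, $B(V_8) \le \tfrac{2}{3}C_6$. The appearance of Lemma~\ref{phi(1)<phi(2)} immediately before this Lemma is a strong hint: one should engineer $B$ so that the ratio $B(V_8)/B(V_6)$ reduces exactly to the quantities $\bigl(\tfrac{1+V_8+\gamma z}{1+V_6+\gamma z}\bigr)^2$ and $\bigl(\tfrac{1+V_8+2\gamma z}{1+V_6+2\gamma z}\bigr)^3$, so that the desired endpoint inequality follows directly from Lemma~\ref{phi(1)<phi(2)}. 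A natural candidate, motivated by the structure of $f_2(V)$ and $g_2(V)$ in \eqref{g1g2f1f2}, is a product of powers of $(1+V+\gamma z)$ and $(1+V+2\gamma z)$ normalized at $V_6$, arising from an approximate integration of \eqref{ODE} in which one replaces $C^2$ by its dominant value on the trajectory.

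The remaining work is the verification of the barrier condition
\begin{equation*}
F(V,B(V)) - B'(V)\,G(V,B(V)) > 0 \qquad \text{on } (V_6,V_8),
\end{equation*}
keeping in mind that $G(V,B(V))<0$ throughout this interval. Since $F$ and $G$ both vanish at $P_6$, the inequality at $V = V_6$ has to be interpreted via a first-order analysis: I would compute the admissible slope of the solution curve emerging from the triple point $P_6$ from the eigenvalue/eigenvector structure of the linearization of \eqref{ODE}, and check that this slope is strictly more negative than $B'(V_6)$, thereby placing $B$ strictly above the trajectory on a right-neighborhood of $V_6$. Away from $V_6$, clearing denominators reduces the barrier condition to a polynomial sign inequality in $V$ depending on the parameters $\gamma$ and $z$, which can then be closed via the Fourier-Budan theorem together with explicit evaluations at $V_6$ and $V_8$, in the spirit of the other polynomial estimates in the paper.

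The main obstacle is designing the barrier $B$ so that it is simultaneously sharp enough to yield $B(V_8) \le \tfrac{2}{3}C_6$ through Lemma~\ref{phi(1)<phi(2)} and loose enough to remain above the trajectory across all of $[V_6, V_8]$. The restriction $z \in (z_g, z_0(\gamma))$, with $z_0(\gamma) = (22-5\gamma)/125$ from \eqref{z0}, is essential: outside this range the sign of the relevant polynomial fails, which is exactly why this regime is treated here and the complementary regime $z \ge z_s$ is handled by the fourth-quadrant barrier of Lemma~\ref{lower barrier when m=1,2}. The calibration of $z_0$ in \eqref{z0} should appear naturally when one tries to close the polynomial sign inequality at the endpoints of $[V_6, V_8]$.
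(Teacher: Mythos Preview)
Your instinct to bound $C(V_8)$ via an integral comparison on $[V_6,V_8]$ is correct, but several details of your proposal are off.

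First, a sign slip: on $(V_6,0)$ along the trajectory one has $G=C^2g_1-g_2>0$ (and hence $F<0$), not the other way round. Lemma~\ref{L:FG=0} does not apply here as it is stated only for $C\le 0$.

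More importantly, you misread Lemma~\ref{phi(1)<phi(2)}. It does \emph{not} yield the endpoint bound $B(V_8)\le\tfrac{2}{3}C_6$; it merely compares the $m=1$ and $m=2$ expressions so that it suffices to treat $m=2$. In the paper one does not design a barrier $B$ and then check a polynomial barrier inequality via Fourier--Budan. Instead, one compares the logarithmic derivative directly: writing $\frac{d\ln C}{dV}=\frac{C^2f_1-f_2}{C^2g_1-g_2}$ and subtracting the explicitly integrable $\frac{f_2}{g_2}$, the difference factors cleanly as a multiple of $(V-V_4)(V-V_6)(V-V_8)$, which is negative on $(V_6,V_8)$ precisely because $V_4<V_6$ for $z>z_g$. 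This one-line sign check replaces your proposed polynomial analysis, and the subsequent explicit integration of $f_2/g_2$ produces
\[
\Phi(\gamma,z;m)=C_6\,\frac{V_8}{V_6}\Big(\frac{1+V_8}{1+V_6}\Big)^{\frac{1-\gamma}{2}}\Big(\frac{1+m\gamma z+V_8}{1+m\gamma z+V_6}\Big)^{\frac{(m+1)(\gamma-1)}{2}}.
\]
Lemma~\ref{phi(1)<phi(2)} then gives $\Phi(\gamma,z;1)<\Phi(\gamma,z;2)$, and the bound $\Phi(\gamma,z;2)<\tfrac{2}{3}C_6$ follows from three further elementary inequalities (the ratios $\frac{V_8}{V_6}<\tfrac{4}{9}$, $\frac{(1+V_8)^2V_8^3}{(1+V_6)^2V_6^3}<1$, and $\frac{(1+V_6)(1+V_8+2\gamma z)}{(1+V_8)(1+V_6+2\gamma z)}<\tfrac{2}{3}$), combined as $(\tfrac{2}{3})^{(7-3\gamma)/2}<\tfrac{2}{3}$ for $\gamma<\tfrac{5}{3}$. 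Your first-order eigenvalue analysis at $P_6$ is therefore unnecessary, and the restriction $z<z_0(\gamma)$ enters only in verifying the last of these three ratio estimates, not in closing a barrier condition.
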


\begin{proof}
	Recalling Lemma \ref{intersect G=0}, we notice that for any $V\in(V_6,V_8)$,
	\begin{align*}
		\cfrac{C^2f_1-f_2}{C^2g_1-g_2}-\cfrac{f_2}{g_2} =-\frac{m[(m + 1)(\gamma- 1) + 2]}{2}\cfrac{(V-V_4)(V-V_6)(V-V_8)C^2}{(C^2g_1-g_2)g_2}<0,
	\end{align*}
where we used $V_4<V_6<V_8$ for any $\ga\in(1,\gau)$ and $z\in(z_g(\ga),z_0(\ga))$, and $g_2(V)<0$ and $C^2g_1-g_2>0$ along the solution curve for $V\in(V_6,0)$. Thus, for any $V\in(V_6,V_8)$,
	\begin{align*}
		\ln C(V_8) < \int_{V_6}^{V_8} \frac{f_2(V)}{g_2(V)} dV+\ln C_6 =:\ln\Phi(\ga,z;m).
	\end{align*}
	In order to show \eqref{upper bound of C(V8)} holds, it is therefore sufficient to show that $\Phi(\ga,z;m)<\frac{2C_6}{3}$. By direct computation from~\eqref{g1g2f1f2},
	\begin{align*}
		\Phi(\ga,z;m) = C_6\frac{V_8}{V_6} \Big(\frac{1+V_8}{1+V_6} \Big)^{\frac{1-\ga}{2}}\Big(\frac{1+m\ga z+V_8}{1+m\ga z+V_6} \Big)^{\frac{(m+1)(\ga-1)}{2}}.
	\end{align*}
	From Lemma \ref{phi(1)<phi(2)}, $\Phi(\ga,z;1)<\Phi(\ga,z;2)$ for any $\ga\in(1,\frac{8}{5})$ and $z\in (0,z_0)$. 
	It is therefore enough to check $\Phi(\ga,z;2)<\frac{2C_6}{3}$ for $\ga\in(1,\frac{8}{5})$  (as $\gau<\frac85$) and $z\in(0,z_0)$. We observe that
	\begin{align}
		\Phi(\ga,z;2) = C_6\Big(\frac{V_8}{V_6} \Big)^{\frac{5-3\ga}{2}}\Big(\frac{(1+V_8)^2V_8^3}{(1+V_6)^2V_6^3} \Big)^{\frac{\ga-1}{2}}\Big(\frac{1+V_6}{1+V_8} \frac{1+2\ga z+V_8}{1+2\ga z+V_6} \Big)^{\frac{3(\ga-1)}{2}}.
	\end{align}
	The validity of the following claims:
	\begin{align}
	\frac{(1+V_6)(1+V_8+2\ga z)}{(1+V_8)(1+V_6+2\ga z)}&<\frac{2}{3}, \  \ \ 
	\frac{(1+V_8)^2V_8^3}{(1+V_6)^2V_6^3}<1, \  \ \ 
	\frac{V_8}{V_6} <\frac{4}{9}, \label{3.9-3}
	\end{align}
	implies
	\begin{align*}
		\Phi(\ga,z;2) < C_6 (\frac{2}{3})^{\frac{7-3\ga}{2}}<\frac{2}{3}C_6
	\end{align*}
	for any $\ga\in(1,\frac{8}{5})$. Hence, our goal turns to verify the three inequalities in 
	\eqref{3.9-3}.
	To show the first one, 
	we check that for any $\ga\in(1,\frac{8}{5})$ and $z\in(0,z_0)$,
		\begin{align*}
		&3(1+V_6)(1+V_8+2\ga z)-2(1+V_8)(1+V_6+2\ga z) = [2+(\ga-2)z-5w(z)]\ga z\\ 
		&= \frac{\ga z[-24(\ga-2)^2z^2+(92+54\ga)z-21]}{2+(\ga-2)z+5w(z)}<\frac{\ga z[-24(\ga-2)^2z_0^2+(92+54\ga)z_0-21]}{2+(\ga-2)z+5w(z)}<0,
	\end{align*}
	where the negativity of the numerator is shown in Proposition~\ref{prop:poly} \eqref{L 3.13-1}. 	For the second inequality in \eqref{3.9-3}, 
	we compute
	\begin{align*}
	(1+V_8)^2V_8^3-(1+V_6)^2V_6^3 &= [(\ga-2)^4z^2-2(\ga-2)^2(\ga+1)z+\ga^2]w(z)z^2\\
	&>[\ga^2-2(\ga+1)/5]w(z)z^2>0,
	\end{align*}
		where we used that $z<z_M<\frac{1}{5}$ and $\ga\in(1,\frac{8}{5})$ in the   inequality. Finally, we check the last inequality in \eqref{3.9-3}. 
	We compute
	\begin{align*}
		8V_6-18V_8 
		= 5+5(2-\ga)z-13w(z) 
		 = \frac{-8(18(\ga-2)^2z^2-(36\ga+97)z+18)}{5+5(2-\ga)z+13w(z)}=:\frac{-8p(z)}{5+5(2-\ga)z+13w(z)}.
	\end{align*}
	It is clear that $p(z)$ achieves the global minimum at $z=\frac{36\ga+97}{36(\ga-2)^2}>1>z_0$. Hence, it is enough to check the positivity of $p(z_0)$, 	which is given by Proposition~\ref{prop:poly} \eqref{L 3.13-2}. Therefore, $8V_6-18V_8<0$ for each $\ga\in(1,\gau)$ and $z\in(z_g,z_0)$.
	 This completes the proof.
\end{proof}

Next, we engage in an approximate integration procedure for our solutions in the second quadrant to obtain better control of the behavior around $P_0$. In particular, we obtain an upper bound for the solution trajectory in the second quadrant, which implies a lower bound for the trajectory leaving the origin into the fourth quadrant.
\begin{lemma}\label{upper bound of sol}
Consider $P_*=P_6$ and $m=1,2$. For any $\ga\in (1,\gau)$ and $z\in (z_g(\ga),z_0(\ga))$, $B(V) =-V$ is an upper bound in the 2nd quadrant of the solution trajectories $C=C(V)$ given by Theorem \ref{collapsingthm} for any $V\in[V_8,0)$.
\end{lemma}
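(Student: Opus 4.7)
The approach is a barrier argument. I would show that $B(V)=-V$ is a strict upper barrier for the trajectory $C(V)$ on $[V_8,0)$, so that a standard first-crossing argument yields $C(V)<-V$ there. It suffices to verify (a) the initial inequality $C(V_8)<-V_8$, and (b) the differential inequality $\frac{F(V,-V;\gamma,z)}{G(V,-V;\gamma,z)}<-1$ for all $V\in[V_8,0)$, since any first touching of the barrier from below would force the trajectory's slope there to be at least $-1$.

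For (a), Lemma~\ref{upper bound CV8} gives $C(V_8)<\tfrac{2C_6}{3}=\tfrac{2(1+V_6)}{3}$, so it suffices to show $\tfrac{2(1+V_6)}{3}+V_8<0$. Substituting \eqref{P6}--\eqref{P8} gives
\begin{equation*}
\tfrac{2(1+V_6)}{3}+V_8=\tfrac{1}{6}\bigl[-1+5(\gamma-2)z+w(z)\bigr],
\end{equation*}
which is negative because $\gamma<2$, $z>0$, and $w(z)\in(0,1)$ by \eqref{ineq: V'6>0, V'8<0}. For (b), one first checks that $G(V,-V)=V\bigl[mV^2+(2mz-1-\lambda)V-\lambda\bigr]$ is strictly positive on $[V_8,0)$ by locating the (unique) negative root of the bracketed quadratic strictly below $V_8$, using $\lambda=1+m\gamma z$ together with $z<z_0<z_M$. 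Hence the desired slope inequality reduces to the polynomial inequality $F(V,-V;\gamma,z)+G(V,-V;\gamma,z)<0$ on $[V_8,0)$. Multiplying through by $(1+V)>0$ and expanding with \eqref{g1g2f1f2}, this becomes a polynomial in $V$ of degree at most $4$ with coefficients depending on $\gamma,z,m$. I would establish the sign by evaluating at $V=0$ and $V=V_8$ (the latter simplifying substantially because $F$ and $G$ both vanish at the triple point $(V_8,C_8)$ with $C_8=1+V_8$, so the residual expression is controlled by the gap $C_8-(-V_8)=1+2V_8$) and by analyzing the sign of the derivative via the Fourier--Budan theorem, as in Lemma~\ref{barrier P5}, splitting into $m=1$ and $m=2$ and invoking the polynomial sign bounds from Proposition~\ref{prop:poly}.

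The main obstacle is the polynomial sign verification in (b). The coefficients are genuinely nontrivial functions of $(\gamma,z,m)$, and the restriction $z<z_0(\gamma)$ from \eqref{z0} is essential; this is precisely why the range $z\in(z_g,z_0)$ requires a separate treatment from the range $z\in[z_s,z_M]$, where the quadratic barrier $V=\hat k C^2$ of Lemma~\ref{lower barrier when m=1,2} applies. The linear barrier $-V$ is delicate because it is tangent to the ODE's star point at the origin, and the solution curve is only marginally below it near $V=0$.
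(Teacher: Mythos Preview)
Your step (b) fails: the differential inequality $\frac{F(V,-V)}{G(V,-V)}<-1$ is \emph{false} near $V=0$. Indeed, expanding $F(V,-V)+G(V,-V)$ at $V=0$ using \eqref{G(V,C)}--\eqref{a_1234&z}, both the constant and linear terms vanish, and the quadratic coefficient is
\[
\tfrac{1}{2}\cdot m\bigl[(\gamma-1)-z(\gamma^2-\gamma-4)\bigr]>0
\]
for $\gamma\in(1,2)$ and $z>0$ (since $\gamma^2-\gamma-4<0$ there). Hence $F(V,-V)+G(V,-V)>0$ for $V$ close to $0$, and since $G(V,-V)>0$ you get $F/G>-1$, not $<-1$. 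Geometrically this is unavoidable: $P_0$ is a star node, so trajectories enter with every slope, including slopes strictly greater than $-1$; a pointwise barrier inequality along $C=-V$ would force \emph{all} trajectories below, which is impossible. Your own closing remark that the barrier is ``tangent'' at the origin is exactly the symptom, but the failure is on the wrong side.

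The paper therefore does \emph{not} use a barrier argument here. Instead it runs an approximate integration: from the sign identity
\[
\partial_C\Bigl(\tfrac{1}{C}\tfrac{F(V,C)}{G(V,C)}\Bigr)=\tfrac{2C(g_1f_2-g_2f_1)}{(C^2g_1-g_2)^2}>0\quad\text{for }V\in[V_8,0),\ C>0,
\]
and the standing hypothesis $0<C(V)<-V$ on $[V_8,V_b)$, one bounds $(\ln C)'=\frac{F}{CG}$ along the trajectory by its value at $C=-V$, integrates explicitly via partial fractions to obtain a closed-form majorant $U(V_b)$, and then shows $U(V_b)<-V_b$ using the quantitative bound $C(V_8)<\tfrac{2}{3}C_6$ from Lemma~\ref{upper bound CV8} together with several elementary estimates on the partial-fraction exponents. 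The point is that the integrated comparison absorbs the failure of the pointwise inequality near the origin; the factor $\tfrac{2}{3}$ supplied by Lemma~\ref{upper bound CV8} is what provides the slack. Your step (a) is correct (and the paper proves it by the even simpler observation $C(V_8)<C_6=1+V_6$, $V_8+C_6=(\gamma-2)z<0$), but (b) must be replaced by this integration argument.
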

\begin{proof}
We easily see that $C(V)<-V$ for $V\in[V_6,V_8]$ from the simple observation that the trajectory is monotone and 
\begin{align}\label{C6<-V8}
V_8 + C_6 = C_8+C_6-1 = (\ga-2)z<0,\quad\text{ so that } \quad C(V_8)<C_6<-V_8.
\end{align}
Thus, by continuity of the trajectory, there exists a maximal $V_b\in(V_8,0]$ such that $C(V)<-V$ on the interval $[V_8,V_b)$. To conclude the proof of the Lemma, it is sufficient to show that $V_b=0$. 

Suppose for a contradiction that $V_b<0$. Then, by continuity, we have 
\begin{align}\label{assumption}
	C(V_b)=-V_b.
\end{align}
We will show \eqref{assumption} holds only when $V_b=0$. 
We first observe from~\eqref{G(V,C)}--\eqref{F(V,C)} that
\begin{align*}
	\frac{\partial}{\partial C}\Big(\frac1C\frac{F(V,C)}{G(V,C)}\Big) = \frac{2C(g_1f_2-g_2f_1)}{(C^2g_1(V)-g_2(V))^2} =  \frac{m[(m + 1)(\gamma- 1) + 2]}{2}\frac{2C (V-V_4)(V-V_6)(V-V_8)}{(C^2g_1(V)-g_2(V))^2} >0
\end{align*}
for any $V\in[V_8,0)$, $C>0$, where we used $V_4<V_6<V_8$ for any $\ga\in(1,\gau)$ and $z\in(z_g(\ga),z_0(\ga))$.
Therefore,  as $0<C( V)<- V$ on $[V_8,V_b)$, we have
\begin{align*}
    \ln{C(V_b)} = \ln{C(V_8)} +\int_{V_8}^{V_b} \cfrac{C^2f_1(V )-f_2(V )}{C^2g_1(V )-g_2(V )} \,d{V} <  \ln{C(V_8)} +\int_{V_8}^{V_b} \cfrac{V^2f_1(V )-f_2(V )}{V^2g_1(V )-g_2(V )} \,dV=: \ln U(V_b).
\end{align*}
Recalling the identities \eqref{g1g2f1f2} and \eqref{a_1234&z}, we solve for $\ln U(V)$ using the partial fraction decomposition. A lengthy calculation shows that
\begin{align*}
     &\cfrac{V^2f_1(V)-f_2(V)}{V^2g_1(V)-g_2(V)}
         =\frac{1}{V}+\frac{b_1}{1+V}+\frac{b_2}{V-r_1}+\frac{b_3}{V-r_2},
\end{align*}
where
\begin{align}
\Delta &=m^2(\ga-2)^2z^2+4m[(m+1)\ga-2]z+4(m+1) > m^2(\sqrt 3+(\ga-1)z)^2,\label{Delta}\\
 r_1 &= \cfrac{-[m(2-\ga)z-2]+\sqrt{\Delta}}{2m}>\frac{2+\sqrt 3 m+m(2\ga-3)z}{2m}>0,\label{r1}\\
    r_2 &= \cfrac{-[m(2-\ga)z-2]-\sqrt{\Delta}}{2m},\label{r2}\\
    b_1 &= -\frac{mz}{m+1-2mz}<0,\label{B}\\
b_2&= \frac{1-\ga+(\ga-1)\ga z-4z+\frac{2(1+m\ga z)}{m}b_1+(-\ga-1-2b_1)r_1}{2(r_1-r_2)}<0,\label{Q}\\
b_3&=\frac{1-\ga+(\ga-1)\ga z-4z+\frac{2(1+m\ga z)}{m}b_1+(-\ga-1-2b_1)r_2}{2(r_2-r_1)},\label{W}
\end{align}
where the inequality \eqref{Delta} is obtained by directly verifying that 
\begin{align*}
	\Delta - m^2(\sqrt 3+(\ga-1)z)^2= - m^2(2\ga-3)z^2 +2m(4-\sqrt 3 m)(\ga-1)z + 4(m+1)-3m^2 +4m(m-1)\ga z>0
\end{align*}
holds for both $m=1,2$, $z\in(z_g,z_0)$ and $\ga\in(1,\gau)$. Inequalities \eqref{r1} and \eqref{Q} then follow as well. 
It is important to note that the validity of $r_2<V_8$ is necessary to ensure that $\ln U(V)$ is well-defined in the interval $V\in(V_8,0)$. By using $w'(z)<0$ (see \eqref{ineq: V'6>0, V'8<0}), we check that
\begin{align*}
V_8 - r_2 =&\,\frac{\sqrt\Delta-[2+m(1-w(z))]}{2m}>\frac{\sqrt\Delta-(2+\frac{3m}{5})}{2m} \\
&=\frac{m^2(\ga-2)^2z^2+4m[(m+1)\ga-2]z+\frac{8m}{5}-\frac{9m^2}{25}}{2m[\sqrt\Delta+(2+\frac{3m}{5})]}
>\frac{40-9m}{50[\sqrt\Delta+(2+\frac{3m}{5})]}>0,
\end{align*}
where we have used Lemma \ref{w(zs)>2/5} in the first inequality. Therefore, we have
\begin{align}\label{ineq:U(V)<-V}
    U(V_b) = -V_b\frac{C(V_8)}{-V_8}\Big(\frac{1+V_b}{1+V_8}\Big)^{b_1}\Big(\frac{r_1-V_b}{r_1-V_8}\Big)^{b_2}\Big(\frac{V_b-r_2}{V_8-r_2}\Big)^{b_3},
\end{align}
which is well-defined as $V_b\in(V_8,0)$. 

 From Lemma \ref{upper bound CV8} and \eqref{C6<-V8}, 
 $\frac{C(V_8)}{-V_8}<-\frac{2C_6}{3V_8}<\frac{2}{3}$. Additionally, since $b_1<0$, it follows that $(\frac{1+V_b}{1+V_8})^{b_1}<1$. It is then enough to show that, for any $V\in(V_8,0)$,
\begin{align*}
	\phi(V):=\frac{2(r_1-V)^{b_2}(V-r_2)^{b_3}}{3(r_1-V_8)^{b_2}(V_8-r_2)^{b_3}}<1,
\end{align*}
as this establishes $U(V_b)<-V_b$, a contradiction.
Considering the intricate nature of the function $\phi(V)$, which depends on $V$, $\gamma$, and $z$, we aim to establish segmented bounds for $\phi(V)$. We first decompose $\phi(V)$ as
\begin{align*}
	\phi(V) = \frac{2}{3} (\frac{r_1-V}{r_1-V_8})^{b_2+b_3} (\frac{r_1-V_8}{r_1-V}\frac{V-r_2}{V_8-r_2})^{b_3}
\end{align*}
and reduce the problem to showing
\begin{align}
	(\frac{r_1-V_8}{r_1-V}\frac{V-r_2}{V_8-r_2})^{b_3}&<1,\quad 
	(\frac{r_1-V}{r_1-V_8})^{b_2+b_3} <\frac{3}{2}\label{phi2}.
\end{align}
To prove the first inequality of \eqref{phi2}, we first show that $b_3<0$. Denote $p(z) = 2(r_2-r_1)b_3$. We compute
\begin{align*}
p'(z) = (\ga-1)\ga-4+2\ga b_1+(\frac{2(1+m\ga z)}{m}-2r_2)b_1'(z) - (\ga+1+2b_1)r_2'(z),
\end{align*}
where
\begin{align*}
	b_1'(z) &= -\frac{m(m+1)}{(m+1-2mz)^2}<-\frac{m}{m+1}<0,\\
	r_2'(z) 
	&=-\frac{2-\ga}{2}-\frac{m}{2}\sqrt{\frac{(\ga-2)^4z^2+\frac{4}{m}[(m+1)\ga-2](\ga-2)^2z+\frac{4}{m^2}[(m+1)\ga-2]^2}{m^2(\ga-2)^2z^2+4m[(m+1)\ga-2]z+4(m+1)}}\\&>-\frac{2-\ga}{2}-\frac{m}{2} = \frac{\ga-2-m}{2},
\end{align*}
for $m=1,2$, $z\in(z_g,z_0)$ and $\ga\in(1,\gau)$. It is clear that $-\ga-1-2b_1<0$ and $\frac{2(1+m\ga z)}{m}-2r_2>0$ as $r_2<V_8<0$. Therefore,
\begin{align*}
	p'(z)<(\ga-1)\ga -4+2\ga b_1+\frac{(m+2-\ga)(\ga+1+2b_1)}{2} = \frac{\ga^2+(m-1)\ga+m-6}{2} + (2+m+\ga)b_1<0
\end{align*}
for any $\ga\in(1,\gau)$. Hence
\begin{align}\label{p(z)>p(zs)>0}
	p(z)\geq p(z_0(\ga))> p(z_0(1)) >0,
\end{align}
where the last inequality is shown in Lemma \ref{p(zs)>0}. Therefore, by using $r_2-r_1<0$, we obtain
\begin{align}\label{W<0}
	b_3 <0.
\end{align}
Combining the inequalities \eqref{W<0} and $V_8<r_2<0$ with the simple fact that the function $f(V)=\frac{V-r_2}{r_1-V}$  is  increasing for $V\in(r_2,0)$, we have derived the inequality. 

It remains to prove the second inequality of \eqref{phi2}. Since $b_2+b_3 = -\frac{1}{2}(\ga+1+2b_1)$, we see easily from~\eqref{B} and the inequality $z<\frac15$, that
\begin{align*}
	0>b_2+b_3 >-\frac{\ga+1}{2} > -\frac{3}{2}.
\end{align*}
As the function $ \frac{r_1-V}{r_1-V_8}$ is decreasing in $V$, we have
\begin{align}\label{3.10-2}
	(\frac{r_1-V}{r_1-V_8})^{b_2+b_3} < (\frac{r_1}{r_1-V_8})^{-\frac{3}{2}}.
\end{align}
We claim that $\frac{r_1-V_8}{r_1}<\frac{9}{7}$. By using \eqref{r1} and $\frac{d V_8}{dz}<0$ (see \eqref{ineq: V'6>0, V'8<0}), we have
\begin{align*}\label{3.10-3}
	-\frac{2r_1}{7}-V_8 <  \frac{-4+m[(20-11\ga)z+7-2\sqrt 3-7w(z)]}{14m}
	\end{align*}
We will show $q(z;m)<0$. If $(20-11\ga)z+7-2\sqrt 3-7w(z)<0$, then we are done. If $(20-11\ga)z+7-2\sqrt 3-7w(z)>0$, then $q(z;1)<q(z;2)$. Hence, it is enough to check $q(z;2)<0$. Since $w'(z)<0$ and $20-11\ga>0$ for any $\ga\in(1,\gau)$, $q'(z;2)>0$. It is therefore sufficient to check $q(z_0;2)<0$, and this is shown in Proposition~\ref{prop:poly} \eqref{q(z0)<0}.
We obtain
\begin{align*}
	(\frac{r_1-V}{r_1-V_8})^{b_2+b_3} < (\frac{7}{9})^{-\frac{3}{2}}<\frac{3}{2}.
\end{align*}
This completes the proof of \eqref{phi2}. Thus, \eqref{ineq:U(V)<-V} holds for  $V_b\in(V_8,0)$ which contradicts \eqref{assumption}. It follows that $V_b=0$. This completes the proof.
\end{proof}

The next two lemmas provide useful technical properties that will enable us to conclude the existence of a suitable lower barrier for the solution in the fourth quadrant in the case $\ga\in(1,\gau)$, $z\in(z_g,z_0)$.

\begin{lemma}[Lower bound of $z_g$]\label{func lower bound of zg}
Let $m=1, 2$. For any $\ga\in(1,\gau)$, $\frac{2(\ga-1)}{11}<z_g(\ga)$.
\end{lemma}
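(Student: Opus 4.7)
The plan is to reduce the inequality $z_g(\ga) > \frac{2(\ga-1)}{11}$ directly to an explicit polynomial inequality in $\ga$ for each of $m=1,2$, and then to verify that inequality by checking it at the single value $\ga = \tfrac85$ (using $\gau < \tfrac85$) together with a simple monotonicity argument on $(1,\tfrac85)$.

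\emph{Case $m=1$.} From \eqref{zg}, $z_g(\ga) > \frac{2(\ga-1)}{11}$ is equivalent, after clearing the positive denominator $\ga(\ga-1)$, to
\begin{equation*}
11\sqrt{\ga^2+(\ga-1)^2} > 11\ga + 2\ga(\ga-1)^2.
\end{equation*}
Both sides are positive for $\ga>1$, so I would square, cancel the common $121\ga^2$, and divide by $(\ga-1)^2 > 0$ to reduce the claim to
\begin{equation*}
p_1(\ga) := 4\ga^2\bigl[11+(\ga-1)^2\bigr] < 121.
\end{equation*}
A direct computation gives $p_1'(\ga) = 88\ga + 8\ga(\ga-1)(2\ga-1) > 0$ for $\ga > 1$, so $p_1$ is strictly increasing on $(1,\tfrac85)$. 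Since $\gau < \tfrac85$, it suffices to evaluate $p_1(\tfrac85) = \tfrac{256}{25}\cdot\tfrac{284}{25} = \tfrac{72704}{625} < 121$.

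\emph{Case $m=2$.} Write $A(\ga) = 2\ga^2-\ga+1$ and $B(\ga) = 4\ga(\ga-1)+\tfrac83$. Then \eqref{zg} gives
\begin{equation*}
z_g(\ga) = \frac{\sqrt{A^2 + 2\ga(\ga-1) B} - A}{\ga B},
\end{equation*}
and an identical reduction (both sides positive, squaring, cancelling $A^2$ and dividing by the positive factor $2\ga(\ga-1)B$) transforms $z_g > \frac{2(\ga-1)}{11}$ into
\begin{equation*}
p_2(\ga) := 22\,A(\ga) + 2\ga(\ga-1)B(\ga) < 121.
\end{equation*}
Expanding, $p_2(\ga) = 44\ga^2 - 22\ga + 22 + 8\ga^2(\ga-1)^2 + \tfrac{16}{3}\ga(\ga-1)$. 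I would then check that $p_2'(\ga) = 88\ga - 22 + 16\ga(\ga-1)(2\ga-1) + \tfrac{16}{3}(2\ga-1) > 0$ for $\ga > 1$, and verify by explicit arithmetic that $p_2(\tfrac85) < 121$; the numerical value is $\approx 111.93$. Monotonicity plus the endpoint bound and $\gau < \tfrac85$ then conclude the case.

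The only real obstacle is bookkeeping with the $m=2$ coefficients: the algebraic manipulations are routine but the intermediate polynomials are long, so care is needed in the expansion of $p_2$ and in certifying $p_2'>0$ on $(1,\tfrac85)$ term by term. If at any point the straightforward monotonicity of $p_2'$ failed, a fallback would be to appeal to the Fourier--Budan style verification used elsewhere in the paper (cf.\ the sign analyses invoked via Proposition~\ref{prop:poly}) to pin down the sign of $121 - p_2(\ga)$ on $(1,\tfrac85)$ directly.
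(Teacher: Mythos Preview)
Your proof is correct and is essentially the same argument as the paper's: the paper rationalizes $z_g$ as $\frac{2(\ga-1)}{f(\ga)}$ (respectively $\frac{\ga-1}{g(\ga)}$ for $m=1$) and shows $f$ is increasing with $f(\gau)<11$, which is algebraically equivalent to your squared inequalities $p_2<121$ and $p_1<121$ together with monotonicity. The only cosmetic difference is that you evaluate at the concrete upper bound $\tfrac85>\gau$, whereas the paper evaluates at $\gau$ itself.
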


\begin{proof}
Recalling \eqref{zg}, for any $\ga\in(1,\gau)$, when $m=1$, we have
\begin{align*}
	z_g(\ga) = \frac{\gamma-1}{\gamma(\sqrt{\gamma^2+(\gamma-1)^2}+\gamma)} >  \frac{\gamma-1}{\gau(\sqrt{\gau^2+(\gau-1)^2}+\gau)} >\frac{2(\ga-1)}{11}.
\end{align*}
When $m=2$, since 
\begin{align*}
	z_g(\ga) = \frac{2(\ga-1)}{\sqrt{(2\ga^2-\ga+1)^2+2\ga(\ga-1)[4\ga(\ga-1)+\frac{8}{3}]}+2\ga^2-\ga+1}=:\frac{2(\ga-1)}{f(\ga)},
\end{align*}
it is enough to show that $f(\ga)<11$ for any $\ga\in(1,\gau)$. We compute
\begin{align*}
	f'(\ga) = \frac{(72\ga^2-90\ga+44)\ga+11(\ga-1)}{\sqrt{3(1-3\ga)^2(4\ga^2-4\ga+3)}}+4\ga-1>0.
\end{align*}
Hence, for any $\ga\in(1,\gau)$,
\begin{align*}
	f(\ga)< f(\gau)<11.
\end{align*}
This completes the proof.
\end{proof}

\begin{lemma}[Upper bound of $C_8$]\label{Upper bound of C8}
Let $m=1,2$. For any $\ga\in(1,\gau)$ and $z\in(z_g,z_0)$, $C_8\leq 1+2(\ga-2)z$.
\end{lemma}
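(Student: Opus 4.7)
The plan is to reduce the inequality $C_8\le 1+2(\ga-2)z$ to a direct algebraic statement about $w(z)$, then verify it by squaring. Recalling from \eqref{P8} that $V_8=\tfrac12(-1+(\ga-2)z+w(z))$ and $C_8=1+V_8$, I would compute
\begin{equation*}
C_8=\frac{1+(\ga-2)z+w(z)}{2},
\end{equation*}
so the desired inequality is equivalent to
\begin{equation*}
w(z)\le 1+3(\ga-2)z.
\end{equation*}

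Before squaring, I need to check the right-hand side is positive. Since $\ga<\gau<2$ and $z<z_0(\ga)\le z_M<\tfrac15$, we have $3(\ga-2)z>-\tfrac35$, so $1+3(\ga-2)z>\tfrac25>0$. Now I can square both sides safely: using the explicit form $w(z)^2=1-2(\ga+2)z+(\ga-2)^2z^2$ from \eqref{w(z)}, the squared inequality becomes, after expansion and cancellation,
\begin{equation*}
0\le 8z\bigl[(\ga-1)+(\ga-2)^2 z\bigr],
\end{equation*}
which is manifestly true since $\ga>1$ and $z>0$. This chain is reversible (given the positivity check above), so the original inequality follows.

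The proof is short and mechanical, and I would not expect any obstacle. The only mild subtlety is the positivity check on $1+3(\ga-2)z$ needed to justify squaring, which is handled by the explicit bound $z<\tfrac15$ available from $z\le z_M$.
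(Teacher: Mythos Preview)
Your proof is correct and essentially identical to the paper's: both reduce to the same algebraic identity $w(z)^2-(1+3(\ga-2)z)^2=-8z[(\ga-1)+(\ga-2)^2z]$, with the paper writing it as a rationalization (conjugate trick) and you as a squaring step after the same positivity check on $1+3(\ga-2)z$.
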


\begin{proof}
Direct computation gives
\begin{align*}
	C_8-1-2(\ga-2)z = \frac{-1-3(\ga-2)z+w(z)}{2} =- \frac{4[(\ga-2)^2z^2+(\ga-1)z]}{(1+3(\ga-2)z+w(z))} <0, 
\end{align*}
for any $\ga\in(1,\gau)$ and $z\in(z_g,z_0)$. Hence, $C_8\leq 1+2(\ga-2)z$.
\end{proof}

The next lemma provides the final barrier argument we require to conclude that, in the remaining case, $\ga\in(1,\ga_u)$, $z\in(z_g,z_0)$, the solution trajectory passes below $C_g$ before meeting the lower sonic line.
\begin{lemma}\label{m2 small ga and z}
Consider $P_*=P_6$ and $m=1,2$. For any $\ga\in (1,\gau)$ and $z\in (z_g(\ga),z_0(\ga))$, $B(C)=-C(1+C)$ is a lower barrier of solution trajectories $V=V(C)$ provided by Lemma \ref{pass collapse} for $C\in(C_9,0)$ in the 4th quadrant.
\end{lemma}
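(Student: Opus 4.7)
The plan is a barrier argument in the $(V,C)$-plane to show that $h(C) := V(C) - B(C) > 0$ for all $C \in (C_9, 0)$, where $B(C) = -C(1+C)$. First, I verify the initial condition near $C = 0$. By Lemma \ref{upper bound of sol}, the upper bound $C(V) < -V$ in the second quadrant, combined with the analytic passage through the origin (Lemma \ref{pass collapse} and Theorem \ref{analytic}), gives $v_1 = \lim_{C \to 0^-} V'(C) \leq -1$. Expanding $V(C) = v_1 C + v_2 C^2 + O(C^3)$ yields $h(C) = (v_1+1)C + (v_2+1)C^2 + O(C^3)$, which is positive for $C<0$ close to $0$: either $v_1 < -1$ gives positivity at linear order, or in the degenerate case $v_1=-1$ the formula \eqref{v2} can be used to verify $v_2+1>0$ on the parameter region.

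The core of the proof is to establish the barrier inequality
\begin{align*}
\Barrier(C) := G(B(C),C) - B'(C)\,F(B(C),C) < 0 \qquad \text{for all } C \in (C_9, 0).
\end{align*}
Noting that $F>0$ in $S_U$ by Lemma \ref{L:FG=0}(i) and that the barrier lies in $S_U$ (since $1+B(C) > 1 > -C$ whenever $C>-1$), this is equivalent to the slope comparison $V'|_{V=B(C)} = G/F < B'(C)$. Substituting $V = -C(1+C)$, $1+V = 1-C-C^2$, and $B'(C) = -(1+2C)$ into \eqref{G(V,C)}--\eqref{g1g2f1f2} and clearing the denominator in $f_1$ reduces $\Barrier$ to a polynomial in $C,\ga,z$. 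A Taylor expansion near $C=0$, using $\la = 1 + m\ga z$, gives
\begin{align*}
\Barrier(C) = \Big(-1 + \tfrac{m}{2}\big[(\ga-1) + z(4-\ga-\ga^2)\big]\Big) C^2 + O(C^3),
\end{align*}
whose leading coefficient is negative throughout the parameter region $\ga\in(1,\gau)$, $z\in(z_g,z_0)$. Given this inequality and the initial condition, the barrier argument closes by the usual contradiction: any first touching point $C^* \in (C_9, 0)$ with $h(C^*) = 0$ and $h>0$ on $(C^*, 0)$ would satisfy $h'(C^*) \geq 0$, hence $\Barrier(C^*) \geq 0$, contradicting $\Barrier < 0$.

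The main obstacle is the verification of $\Barrier(C) < 0$ across the full range $C \in (C_9, 0)$, beyond the expansion near $C = 0$. Following the template of the other barrier lemmas in this section, I expect this to require splitting the range of $C$ according to the sign of $B'(C)$ (which vanishes at $C=-1/2$) and by the cases $m=1,2$, together with monotonicity or convexity arguments in $C$, applications of the Fourier-Budan Theorem (Theorem \ref{Fourier-Budan Theorem}) to control polynomial roots, and appeals to the auxiliary estimates Lemmas \ref{func lower bound of zg}, \ref{Upper bound of C8} and the polynomial sign estimates in Proposition \ref{prop:poly}. Extra care is needed at the endpoint $C = C_9$, where the triple point $P_9 = (V_8, C_9)$ makes both $F$ and $G$ vanish, so one must verify the limiting behavior of $\Barrier$ there by factoring out the appropriate vanishing factor before checking the sign.
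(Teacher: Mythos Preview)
Your setup is correct and aligns with the paper's approach: the initial condition near $C=0$ follows from Lemma~\ref{upper bound of sol} (your deduction $v_1 \leq -1$ is the right one; the paper's printed ``$V'(0)>-1$'' is evidently a sign slip), and the barrier inequality is precisely $G(B(C),C) + (1+2C)F(B(C),C) < 0$. After multiplying by the positive factor $1+B(C)$ and extracting $C^2/2$, this becomes the explicit sixth-degree polynomial $\Barrier(C;m)$ of~\eqref{6th poly}, whose constant term is your leading coefficient.

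However, two of your anticipated difficulties are off, and you are missing the actual mechanism for the global sign. First, no splitting at $C=-\tfrac12$ is used: the paper applies Fourier--Budan directly to $\Barrier(C;m)$ on $[-1,0]$ (Lemma~\ref{one zero-1}) to show there is exactly one root there; since $\Barrier(0;m)<0$ and $\Barrier(-1;m)>0$, negativity on all of $(C_9,0)$ reduces to checking $\Barrier(C_9;m)<0$. Second, your worry about $F,G$ vanishing at the endpoint is misplaced: the barrier point $(B(C_9),C_9)$ satisfies $B(C_9)-V_8 = 1-C_9^2 > 0$, so it is \emph{not} the triple point $P_9$, and nothing degenerates there. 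The genuine difficulty at $C_9$ is that $C_9$ depends implicitly on $(\ga,z)$; the paper circumvents this via Lemma~\ref{Upper bound of C8}, which gives the explicit bound $C_9 \geq -1-2(\ga-2)z$, and then verifies $\Barrier(-1-2(\ga-2)z;m)<0$ by reducing to a polynomial $p(z)$, showing $p'(z)<0$, and evaluating at the lower bound $z=\tfrac{2(\ga-1)}{11}$ supplied by Lemma~\ref{func lower bound of zg} together with Proposition~\ref{prop:poly}\eqref{p(2(ga-1)/11)<0}.
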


\begin{proof}
Clearly, the curve $V=B(C)$ is tangential to the curve $V=-C$ at $C=0$, while, from Lemma \ref{upper bound of sol}, the solution trajectory $V(C)$ satisfies $V'(0)>-1$. Consequently, there exists $\epsilon>0$ such that the solution $V(C)>B(C)$ for $C\in(-\epsilon,0)$. By applying the barrier argument, to conclude the proof, it is sufficient to show that 
\begin{align}
	\frac{G(-C(1+C),C;\ga,z)}{F(-C(1+C),C;\ga,z)}+(1+2C) <0 
\end{align}
for any $\ga\in(1,\gau)$, $z\in(z_g,z_0)$ and $C\in(C_9,0)$. As $F(-C(1+C),C;\ga,z)>0$ by Lemma \ref{L:FG=0} and $1-C(1+C)>0$ for $C\in[C_9,0]$, it is equivalent to show that
\begin{align}
	\frac{C^2}{2}\Barrier(C;m):=\Big[G(-C(1+C),C;\ga,z)+(1+2C)F(-C(1+C),C;\ga,z) \Big]\Big(1-C(1+C)\Big)<0 .
\end{align}
A direct computation shows
\begin{align}\label{6th poly}
	\Barrier(C;m)=&\,2[m(\ga-1)+1]C^6+[7m(\ga-1)+6]C^5+[5m\ga-3m-2+2m(\ga-2)\ga z]C^4\notag\\&+[m(5\ga-9)(\ga z-1)-12]C^3+[5m(1-\ga)+2+2m\ga^2 z]C^2\notag\\&+ [m(\ga-3)+6-m(2\ga^2-6\ga+2)z]C+m(\ga-1)-2-m(\ga^2+\ga-4)z.
	\end{align}
We shall show that $\Barrier(C;m)<0$ for any $m=1,2$, $\ga\in(1,\gau)$, $z\in(z_g,z_0)$ and $C\in[C_9,0)$ by proceeding in two steps, which, when combined, show $\Barrier(C;m)<0$ for $C\in[C_9,0)$:
\begin{enumerate}
\item $\Barrier(C;m)$ has only  one zero in the interval $C\in(-1,0)$ for any $\ga\in(1,\gau)$ and $z\in(z_g,z_0)$.
\item $\Barrier(0;m)<0$, $\Barrier(C_9;m)<0$.
\end{enumerate}
The first step follows from the Fourier-Budan Theorem (see Theorem \ref{Fourier-Budan Theorem}) and is proven in Lemma \ref{one zero-1}. For the second step, it is clear that $\Barrier(0;m) = m(\ga-1)-2-m(\ga^2+\ga-4)z<0$ and $\Barrier(-1;m)=2m(3-\ga)z>0$. From Lemma \ref{Upper bound of C8}, therefore, to show that $\Barrier(C_9;m)<0$, it is sufficient to verify that $\Barrier(-1-2(\ga-2)z;m)<0$. We compute
\begin{align*}
	\Barrier(-1-2(\ga-2)z;m)
	=2z[mA(\ga,z)+B(\ga,z)],
\end{align*}
where
\begin{align*}
	A(\ga,z) &= 64(\ga-2)^6(\ga-1)z^5+16(\ga-2)^5(6\ga-5)z^4+4(\ga-2)^3(3\ga^2-3\ga-8)z^3\\&\quad-2(\ga-2)^2(11\ga^2-41\ga+36)z^2-(\ga-2)(\ga^2-5\ga-2)z+(\ga-3)^2,\\
	B(\ga,z)&=64(\ga-2)^6z^5+96(\ga-2)^5z^4-16(\ga-2)^4z^3-64(\ga-2)^3z^2-8(\ga-2)^2z+8(\ga-2).
\end{align*}
We first claim that $A(\ga,z)>0$. We check
\begin{align*}
\frac{dA(\ga,z)}{dz}	&=64(\ga-2)^5z^3[5(\ga-2)(\ga-1)z+6\ga-5]+12(\ga-2)^3(3\ga^2-3\ga-8)z^2\\&\quad-4(\ga-2)^2(11\ga^2-41\ga+36)z
	-(\ga-2)(\ga^2-5\ga-2)\\
	&<64(\ga-2)^5(\ga^2+3\ga-3)z^3+\frac{12}{25}(\ga-2)^3(3\ga^2-3\ga-8)+\frac{8}{5}(\ga-2)^2-(\ga-2)(\ga^2-5\ga-2)\\
	&<\frac{\ga-2}{25}(36\ga^4-180\ga^3+167\ga^2+405\ga-414)<0,
\end{align*}
where we have used $z<z_0<z_M<\frac{1}{5}$, the fact that $5(\ga-2)(\ga-1)z+6\ga-5$ is decreasing as a function of $z$, and that $11\ga^2-41\ga+36>-2$ for any $\ga$ in the first inequality, and the negativity of the last quantity is shown in Proposition~\ref{prop:poly} \eqref{L 3.17 p1}. Since $\frac{dA(\ga,z)}{dz}<0$ is valid for any $z<\frac{1}{5}$, we have $A(\ga,z)>A(\ga,\frac{1}{5}) >0$ 
for any $\ga\in(1,\gau)$ and $z\in(z_g,z_0)$, where
$A(\ga,\frac{1}{5})>0$ is shown in Proposition~\ref{prop:poly} \eqref{L 3.17 p2}. Consequently, $\Barrier(-1-2(\ga-2)z;1)< \Barrier(-1-2(\ga-2)z;2)$.
It is then sufficient to check $\Barrier(-1-2(\ga-2)z;2)<0$. We denote $p(z)=\frac{\Barrier(-1-2(\ga-2)z;2)}{4z}$, and a direct computation gives
\begin{align}\label{p(z)}
	p(z)=&\,32(2\ga-1)(\ga-2)^6z^5+32(3\ga-1)(\ga-2)^5z^4+4(3\ga^2-5\ga-4)(\ga-2)^3z^3\notag\\
	&-2(11\ga^2-25\ga+4)(\ga-2)^2z^2-(\ga^2-\ga-10)(\ga-2)z+(\ga-1)^2.
\end{align}
Our objective now is to show that $p(z)<0$. First,
\begin{align*}
	p'(z)
	= 4(\ga-2)(f(z)+g(z))
\end{align*}
where 
\begin{align*}
	f(z)&=40(2\ga-1)(\ga-2)^5z^4+32(3\ga-1)(\ga-2)^4z^3,\\
	g(z)&=3(3\ga^2-5\ga-4)(\ga-2)^2z^2-(11\ga^2-25\ga+4)(\ga-2)z-\frac{\ga^2-\ga-10}{4}.
\end{align*}
We claim that both $f(z)$ and $g(z)$ are positive for any $\ga\in(1,\gau)$ and $z\in(z_g,z_0)$. We check $f(z)>0$ first. Since $(2\ga-1)(\ga-2)<0$ for any $\ga\in(1,\gau)$, and $z<z_M<\frac{1}{5}$, 
\begin{align*}
	f(z) = 8(\ga-2)^4z^3[ 5(2\ga-1)(\ga-2)z+4(3\ga-1)] &> 8(\ga-2)^4z^3[ (2\ga-1)(\ga-2)+4(3\ga-1)] >0. 
\end{align*}
For $g(z)$, we notice that $3(3\ga^2-5\ga-4)(\ga-2)^2<0$, $-(11\ga^2-25\ga+4)(\ga-2)<0$ and $z_0(\ga) = \frac{22-5\ga}{125}<\frac{3}{20}$ for any $\ga\in(1,\gau)$. Hence, $g(z)>g(\frac{3}{20})>0$  
where the positivity of
$g(\frac{3}{20})$ is proven in Proposition~\ref{prop:poly} \eqref{L 3.13 p3}. Therefore, we have $p'(z)<0$ since $\ga-2<0$. 
It is then sufficient to check $p(\frac{2(\ga-1)}{11})<0$ by Lemma \ref{func lower bound of zg}. 
The negativity of $p(\frac{2(\ga-1)}{11})$ is proven in Proposition~\ref{prop:poly} \eqref{p(2(ga-1)/11)<0}. 

In conclusion, we have established that $p(z)<0$  for any $\gamma\in(1,\gau)$ and $z\in(z_g,z_0)$. Consequently, $\Barrier(-1+2(2-\gamma)z)<0$, implying $\Barrier(C)\neq 0$ for $C\in(-1+2(2-\gamma)z,0)$. Therefore, $\Barrier(C)<0$ for any $C\in(C_9,0)$. This completes the proof.
\end{proof}

We now have all the necessary ingredients to prove the main result of this section.

\begin{proposition}\label{approach C=-1-V}
Consider $P_*=P_6, P_8$, $m=1,2$. For any $\ga\in\Gamma(P_*)$ and $z\in\mathring{\mathcal{Z}}(\ga;P_*)$, 
the solution trajectories provided by Lemma \ref{pass collapse} approach the lower sonic line $C=-1-V$ at $(-C_s-1,-C_s)$ with $C_s<\Cg$.
\end{proposition}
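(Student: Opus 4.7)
The plan is to assemble Proposition~\ref{approach C=-1-V} as a direct corollary of the lemmas proved throughout Section~\ref{sec:maximal}, organised as a case analysis on whether $P_* = P_6$ or $P_* = P_8$ and on the sub-interval of $z$. The key observation is that the ``analytic'' content of the proof has already been carried out in the preceding barrier and approximate-integration arguments; what remains is to verify that the union of their hypotheses exhausts the admissible parameter region, and that the combined bounds imply $C_s<\Cg=\min(C_5,C_9)$.

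First, for any $\ga\in\Gamma(P_*)$ and $z\in\mathring{\mathcal{Z}}(\ga;P_*)$, Lemma~\ref{intersect G=0} furnishes a well-defined intersection point $(V_s,C_s)$ of the maximal smooth trajectory with the sonic line $C=-1-V$. To upgrade to $C_s<\Cg$ when $P_*=P_8$, I would appeal to Lemma~\ref{barrier P8} to obtain $C_s<C_9$ for all $\ga\in\Gamma(P_8)=(\gaSix,3]$ and $z\in\mathring{\mathcal{Z}}(\ga;P_8)$; and then invoke Lemma~\ref{barrier P5} to additionally get $C_s<C_5$ for the sub-range $\ga\in[\ga_g,3]$ with $z\in[z_g,z_M]$, where the point $P_5$ can actually be the more negative of $P_5,P_9$. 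Together these give $C_s<\min(C_5,C_9)=\Cg$ whenever $\Cg=C_5$ (by the definition \eqref{Pg}) and $C_s<C_9=\Cg$ otherwise.

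For $P_*=P_6$ (so $\ga\in(1,2)$), the trajectory control is split according to the value of $z$. For $z\in[z_s(\ga),z_M(\ga)]$, Lemma~\ref{lower barrier when m=1,2} provides the parabolic lower barrier $V(C)=\hat{k}C^2$ on $[C_9,0)$; evaluating at $C=C_9=-C_8$ gives $V(C_9)>\hat{k}C_8^2>V_8=V_9=-(1+C_9)$, so the trajectory lies strictly above the sonic line at $C=C_9$, forcing $C_s<C_9$. For the complementary sub-interval $\ga\in(1,\gau)$ and $z\in(z_g,z_0)$, Lemma~\ref{m2 small ga and z} supplies the barrier $V(C)>-C(1+C)$ on $(C_9,0)$; since $C_9\in(-1,0)$ we have $-C_9(1+C_9)>-(1+C_9)$, so again the trajectory is above the sonic line at $C=C_9$ and $C_s<C_9$. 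These two sub-intervals together exhaust $\mathring{\mathcal{Z}}(\ga;P_6)=(z_g,z_M]$. Since for the $P_6$ regime $\ga<\ga_g$, the comparison of the explicit expressions for $C_5$ and $C_9$ yields $\Cg=C_9$, the bound $C_s<C_9$ is exactly the desired $C_s<\Cg$.

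The actual writing is therefore a short bookkeeping argument enumerating the sub-cases; the single non-trivial verification is making sure that, in each $(\ga,z)$ sub-region, whichever of $C_5,C_9$ defines $\Cg$ is already bounded by $C_s$ by one of the established lemmas. The ``main obstacle'' is not an analytic one at this point — it is precisely this case-matching, most delicately in the $P_*=P_8$, $\ga\in(\gaSix,\ga_g)$ regime where only the bound $C_s<C_9$ is known directly, so one must confirm that in this range we are in the regime $\Cg=C_9$ (as given by \eqref{Pg}), which I would verify by a direct monotonicity computation using the closed forms in \eqref{P6}--\eqref{P8} and \eqref{P4}, together with the identities in \eqref{Id: VCzM}.
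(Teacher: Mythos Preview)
Your proposal is correct and follows essentially the same approach as the paper: the proof is a bookkeeping argument that assembles the conclusion from Lemmas~\ref{barrier P8}, \ref{barrier P5}, \ref{lower barrier when m=1,2}, and \ref{m2 small ga and z}, exactly as the paper does in its one-line proof. You are in fact more explicit than the paper, which simply cites the lemmas without spelling out why the sub-cases exhaust the parameter range or why the bounds $C_s<C_9$ (resp.\ $C_s<C_5$) match the definition of $\Cg$; your identification of the need to verify $\Cg=C_9$ outside the $\ga\in[\ga_g,3]$, $z\in[z_g,z_M]$ window is a point the paper leaves implicit (it is asserted in the paragraph preceding Lemma~\ref{C4<2/3}, but not separately proved). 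One small technical remark: in your $P_6$, $z\in[z_s,z_M]$ argument you should take $\hat k$ strictly greater than $k(\ga,z)$ so that $\hat k C_8^2>V_8$ is genuinely strict, since at $\hat k=k$ the barrier inequality degenerates at the endpoint $C=C_9$.
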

\begin{proof}
This follows directly from Lemmas \ref{barrier P8} and \ref{barrier P5} for $P_*=P_8$ and from Lemma \ref{lower barrier when m=1,2}, Lemma \ref{m2 small ga and z}, and the uniqueness of the ODE solutions when $P_*=P_6$.
\end{proof}

\section{Jump Locus and existence of $P_H$}\label{sec:jump}

Our remaining goal is to connect the solution curve a unique solution curve from $P_\infty$, defined in \eqref{Pinfty}.
We first show that there exists a unique solution curve emanating from $P_\infty$. 
\begin{lemma}[The solution from $P_\infty$]\label{P8-P_infity}
Let $m=1,2$, $\ga\in(1,3]$ and $z\in (0,z_M(\ga)]$. 
There exists $C_0<0$ small enough and a unique solution curve $V_\infty(C)$ to \eqref{ODE} with
\begin{align}
	\lim_{C\to \Cg^-}V_\infty(\Cg)=\Vg,\quad \lim_{C\to -\infty} V_\infty(C) = \overline V_\infty.
\end{align}
Moreover, $V_\infty'(C)<0$ for $C\in(-\infty, \Cg)$.
\end{lemma}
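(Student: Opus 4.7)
The plan is to establish the existence and uniqueness of $V_\infty$ in three stages. First, I would construct a local invariant manifold emanating from $P_\infty$ by desingularising the ODE near the critical point at infinity; second, I would extend this local solution monotonically using the sign information from Lemma~\ref{L:FG=0}; finally, I would identify the terminal point as $\Pg$ as $C\to\Cg^-$.

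For the local analysis at $P_\infty$, introduce $\sigma=-1/C>0$ and reparametrise the autonomous flow $\dot V=-G$, $\dot C=-F$ by $s$ with $ds/dt=1/\sigma^2$, so that the system becomes
\begin{align*}
\frac{dV}{ds}=-g_1(V)+\sigma^2g_2(V),\qquad \frac{d\sigma}{ds}=\sigma f_1(V)-\sigma^3f_2(V).
\end{align*}
This system is smooth at the fixed point $(V,\sigma)=(\overline V_\infty,0)$, and since $g_1'(\overline V_\infty)=m+1$ and $f_1(\overline V_\infty)>0$ in the parameter range considered, the linearisation has eigenvalues $-(m+1)<0$ and $f_1(\overline V_\infty)>0$. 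Hence $P_\infty$ corresponds to a hyperbolic saddle, and the stable/unstable manifold theorem yields a unique trajectory entering $\{\sigma>0\}$ tangent to the eigendirection $(0,1)$. Translating back, this produces a unique $V_\infty(C)$ with asymptotic expansion
\begin{align*}
V_\infty(C)=\overline V_\infty+\frac{g_2(\overline V_\infty)}{(2f_1(\overline V_\infty)+m+1)\,C^2}+O(C^{-4})\quad\text{as }C\to-\infty.
\end{align*}
Since $g_2(\overline V_\infty)=\overline V_\infty(1+\overline V_\infty)(\la+\overline V_\infty)<0$, we have $V_\infty(C)<\overline V_\infty$ for $C$ sufficiently negative.

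For the global extension and monotonicity, I would exploit Lemma~\ref{L:FG=0}. Comparing the above asymptotics with the analogous expansion $V_G(C)=\overline V_\infty+g_2(\overline V_\infty)/[(m+1)C^2]+O(C^{-4})$ shows that $V_G(C)<V_\infty(C)<\overline V_\infty$, so that $G(V_\infty(C),C)>0$, and, for $C<-\sqrt{(1+m\ga z)/(1+mz)}$ (where $V_F^+$ does not yet exist), $F(V_\infty(C),C)<0$; hence $V_\infty'(C)=G/F<0$. The solution extends smoothly so long as the trajectory stays in the interior of the region bounded above by $V=V_G(C)$ and, once $C$ enters the domain of $V_F^+$, bounded below by $V=V_F^+(C)$. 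A standard barrier argument confirms that the trajectory cannot cross either boundary: a crossing of $V=V_G$ would force $V_\infty'(C_0)=0$ at a point where $V_G$ itself is strictly decreasing, giving the contradiction $V_\infty'(C_0)\le V_G'(C_0)<0$; a crossing of $V=V_F^+$ would force $V_\infty'=\pm\infty$ at a non-critical point.

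The main obstacle will be the endpoint analysis at $\Pg$. Monotonicity and the trapping of the trajectory ensure that $V^\ast:=\lim_{C\to\Cg^-}V_\infty(C)$ exists. One must then show $V^\ast=\Vg$. When $\Pg=P_5$, this is a degenerate critical point (double root of $F=G=0$), and a normal-form or blow-up analysis is required to conclude that the trapped monotone trajectory terminates precisely there; when $\Pg=P_9$, the sonic denominator $D$ vanishes simultaneously with $F$ and $G$ at the triple point on the lower sonic line, and one must further exclude the trajectory hitting the sonic line at any other point before reaching $P_9$. Both cases rely on careful tracking of the intersection of the trajectory with the zero-loci of $F$, $G$, and $D$ in a neighbourhood of $\Pg$, together with the phase-portrait information around $P_4, P_5, P_8, P_9$ already established in~\cite{JLS23}.
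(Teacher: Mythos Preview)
Your approach largely parallels the paper's, with one genuine methodological difference and one unnecessary worry.

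For the local construction near $P_\infty$, you compactify via $\sigma=-1/C$ and invoke the stable/unstable manifold theorem at the resulting hyperbolic saddle; the paper instead writes $V=\overline V_\infty+\tilde V$, derives a linear-plus-perturbation ODE for $\tilde V$, passes to an integral equation via an integrating factor, and closes by contraction mapping with the explicit bound $|\tilde V|\lesssim C^{-2}$. Both routes are standard and yield the same unique local trajectory; yours is more geometric, the paper's gives concrete decay directly. The trapping argument between the nullclines $V_G$ and $V_F^+$ is then essentially identical in both proofs (though note you have inadvertently swapped ``above'' and ``below'' in describing the trapping region; the correct ordering, which you state earlier, is $V_G<V_\infty<V_F^+$).

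Where you diverge significantly is the endpoint at $\Pg$, which you flag as the main obstacle and propose to attack via normal forms or blow-up at a degenerate critical point. The paper's argument here is far simpler: by the very definition of $\Pg$, the two bounding curves $V_G(C)$ and $V_F^+(C)$ themselves meet at $\Pg$ as $C\to\Cg^-$, so once the trajectory is shown never to touch either nullcline for $C<\Cg$, it is pinched to $\Vg$ by a plain squeeze. No local analysis at $P_5$ or $P_9$ is needed; the convergence follows purely from the geometry of the nullclines recorded in Lemma~\ref{L:FG=0}. Your heavier machinery would work, but it is unnecessary.
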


\begin{proof}
	We decompose 
	\begin{align*}
		V(C) = \overline V_\infty +\tilde{V}(C)
	\end{align*}
	where $\lim_{C\to -\infty} \tilde{V}(C)=0$. 
Denote $\alpha = 1+\frac{mz}{1+\overline V_\infty}$. Then, we see that $\tilde{V}(C)$ satisfies
	\begin{align*}
	\tilde V'(C)-\frac{(m+1)\tilde V(C)}{\alpha C}
	&=\frac{m+1}{C} K(\tilde V, C)
	\end{align*} 
	where 
	\[
	K(\tilde V, C):=  \frac{-\frac{V(V+1)(V+1+m\ga z)}{(m+1)C^2}+\frac{a_1(1+V)^2-a_2(1+V)+a_3}{\alpha C^2}\tilde V+\frac{mz}{\alpha (1+ \overline V_\infty)(1+V)}\tilde V^2}{1+\frac{mz}{1+V}-\frac{a_1(1+V)^2-a_2(1+V)+a_3}{C^2}}. 
	\]
Therefore,
\begin{align}\label{integrated ode}
	(\frac{\tilde V}{(-C)^{(m+1)\alpha ^{-1}}})' 
	& =- \frac{m+1}{(-C)^{1+(m+1)\alpha ^{-1}}}K(\tilde V, C), 
\end{align}
and so we solve
\begin{align}\label{tildeV}
	\tilde V(C)=-(-C)^{(m+1)\alpha ^{-1}}\int_{-\infty}^{C} 
	\frac{(m+1)}{(-\hat{C})^{1+(m+1)\alpha ^{-1}}} K (\tilde V, \hat C)
	d\hat{C}.
\end{align}
By  the contraction mapping theorem, there exists a unique solution $\tilde V(C)$ which solves \eqref{tildeV} such that
\begin{align}\label{bound of tildeV}
	|\tilde V(C)| \lesssim \frac{1}{C^2},\quad \text{ for } |C|>|C_0|
\end{align}
for a sufficiently large  $|C_0|$. Now, let $V_\infty(C)$ be a solution to \eqref{integrated ode} on $(-\infty,C_0]$ with $\lim_{C\to -\infty} V_\infty(C) = \overline V_\infty$. We integrate \eqref{integrated ode}. The integral converges at $-\infty$ from the priori bound $|V|\lesssim 1$, hence $\tilde V$ satisfies \eqref{tildeV} and the a priori bound \eqref{bound of tildeV}, and hence the solution is unique. Next, we extend the unique solution to $C\in(-\infty,\Cg)$.  We claim that $V_G(C)< V_\infty(C)< V_F^+(C)$ for $C<\Cg$.
From Lemma \ref{L:FG=0}, we have
\begin{align}
	D(V,C)<0, \quad F(V,C)<0, \quad G(V,C)>0,
\end{align}
for any $(V,C)\in \{(V,C): V_G(C)< V< V_F^+(C), \ C\in(-\infty,\Cg)\}$. 

We first show that $V_G(C)<V_\infty(C)<V_F^+(C)$ for $C\in(-\infty,C_0)$ for some $|C_0|$ sufficiently large. The inequality $V_G(C)<V_\infty(C)$ holds as $V'_\infty(C)>0$ when $V_G(C)>V_\infty(C)$, and thus if there exists $\tilde C\in(0,C_0)$ such that $V_\infty(C)<V_G(C)$, we cannot have $V(C)\to\overline{V_\infty}$ as $C\to-\infty$. 
In addition, $V(C)<V_F^+(C)$ holds for large $|C|$ as $V_\infty(C) \to \overline{V}_\infty$ and $V_F^+(C)\to \infty$ as $C\to-\infty$  from Lemma \ref{L:FG=0}.

 By the definition of $\Pg=(\Vg,\Cg)$ (see \eqref{Pg}), 
the two curves $V_F^+(C)$ and $V_G(C)$ intersect at $\Pg$. 
In order to show that $\lim_{C\to \Cg^-} V_\infty (C) = \Vg$, it is therefore sufficient to show that $V_\infty(C)$ does not intersect $V_G$ or $V_F^+$ for $C<\Cg$.

\underline{Case I:} Suppose that $V_\infty(C)$ first intersects $V=V_G(C)$ at $C_r\in(-\infty,\Cg)$. Then we have $V_\infty(C)>V_G(C)$ for any $C<C_r$, and $V_\infty'(C_r)=0$.
As $V_G'(C)<0$ by Lemma \ref{L:FG=0}, it follows that $V_G'(C)<V_\infty'(C)$ and $V_G(C)>V_\infty(C)$ for $C\in(C_r-\epsilon, C_r)$, where $\epsilon$ is sufficiently small. This is a contradiction to $V_G(C)<V_\infty(C)$ for $C<C_r$.

\underline{Case II:} Suppose that $V_\infty(C)$ first intersects  $V=V_F^+(C)$ at $C_r\in(-\infty,\Cg)$. Then 
$V_\infty(C)<V_F^+(C)$ for any $C<C_r$, and $V_\infty'(C_r)=-\infty.$ 
This is again a contradiction since $(V_F^+)'(C)<0$ by Lemma \ref{L:FG=0} and $V_\infty(C)<V_F^+(C)$ for any $C<C_r$. 

Therefore, $V_\infty(C)$ can only exit the region at $\Pg$. Since there is no other accumulation point in this region, the solution curve must approach $\Pg$ as $C\uparrow \Cg$. 
\end{proof}

\begin{remark}
The solution from $P_\infty$, considered as a pair of functions $(V_\infty(x),C_\infty(x))$ via the ODE~\eqref{ODE system} is easily seen to exist on a half-infinite interval of $x$, $[\mathring{x},\infty)$ with $\lim_{x\to\infty}(V_\infty(x),C_\infty(x))=P_\infty$ via standard ODE arguments, see, for example, \cite{Jenssen18,Lazarus81} for details. Moreover, one finds $V_\infty(x)=\overline{V}_\infty+O(x^{-2\sigma})$, $C_\infty(x)=\overline{C}_\infty x^{\sigma}+O(x^{-\sigma})$, for $\sigma=\frac{1}{\la}\big(1+\frac{mz}{1+\overline{V}_\infty}\big)$. In particular, the trajectory gives rise to a solution in the $(t,r)$-plane which covers the whole domain $r>0$.
\end{remark}

By Proposition \ref{approach C=-1-V}, the solution trajectories provided by Lemma \ref{pass collapse} must approach $C = -1-V$ at a point $(-C_s - 1,C_s)$ which lies strictly below the triple points $P_7$ and $P_9$. To construct a global solution, we consider the Rankine-Hugoniot conditions \eqref{jump condition} for the solution curves to cross $C = -1-V$. The next lemma presents important properties of the jump conditions.

\begin{figure}
\centering
	\begin{minipage}[b]{.4\textwidth}
	\centering
		\includegraphics[scale = 0.3]{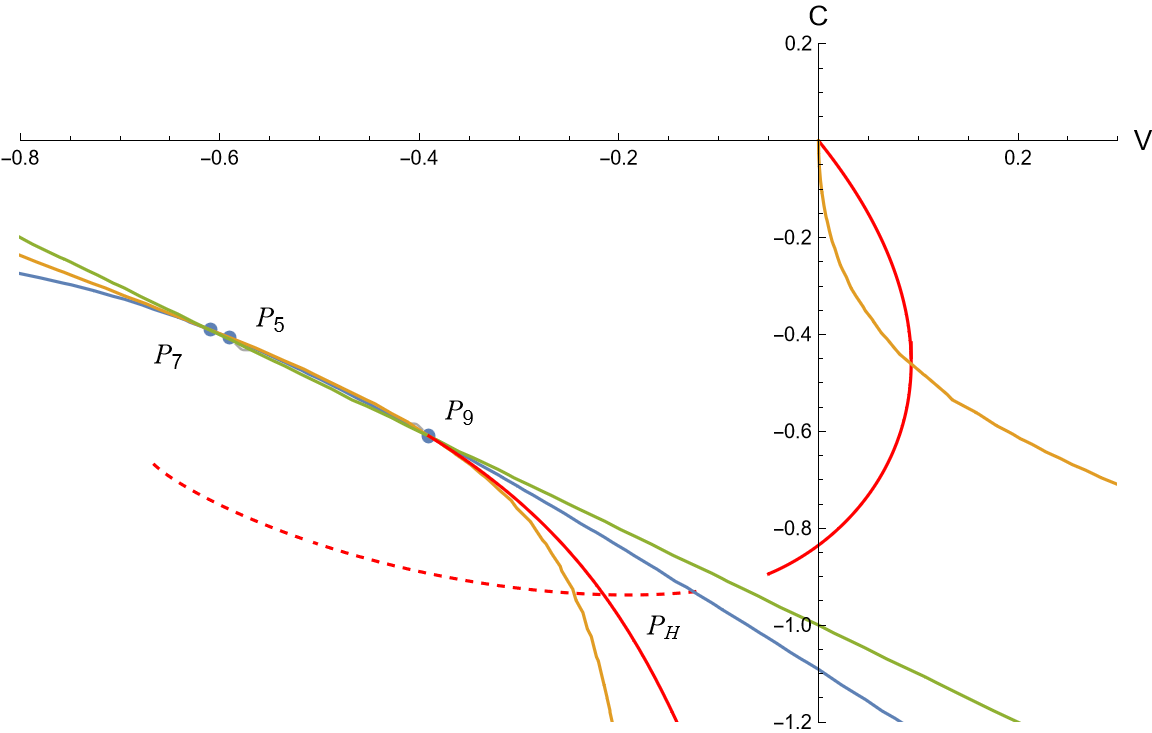}
		\caption{$\ga=2$, $z=0.119$, $m=2$}
	\end{minipage}
	\qquad
	\begin{minipage}[b]{.4\textwidth}
	\centering
		\includegraphics[scale = 0.3]{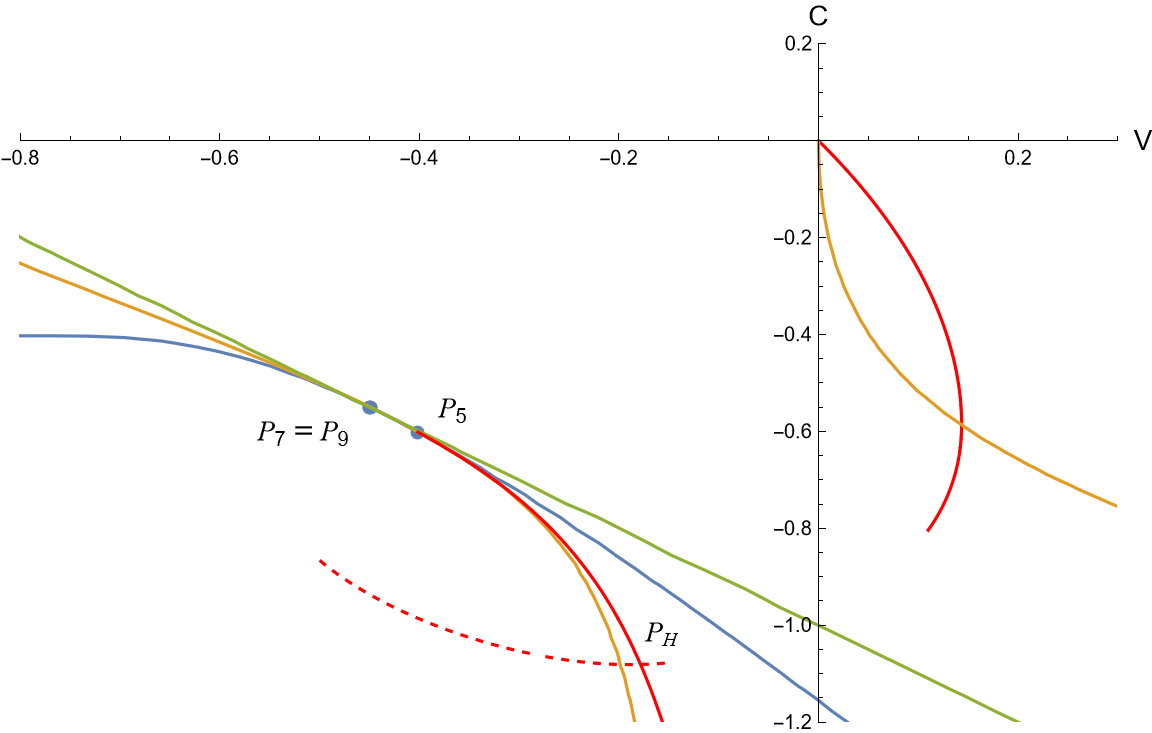}
		\caption{$\ga=3$, $z=z_M(3)$, $m=2$}
	\end{minipage}
	\vspace{-5mm}
\end{figure}
\begin{lemma}\label{1-1 jump point}
Let $m=1,2$, $\ga\in(1,3]$ and $z\in (0,z_M]$. The Rankine-Hugoniot conditions \eqref{jump condition} establish a one-to-one correspondence between the points $(V_-,C_-)\in S_U$ and $(V_+,C_+)\in S_L.$ In other words, $\Jp(V,C)$, as defined in \eqref{Id: Jp}, is well-defined and bijective from $S_U$ to $S_L$. Moreover, 
$\Jp(V,C)$ satisfies
	\begin{enumerate}
		\item[(i)] For $\kappa\in[0,1)$, the  jump locus of the line $C(V) = -\kappa (1+V)$ is the line $C(V) = -\mathcal I(\kappa)(1+V)$, where with $\mathcal I(\kappa) =  \sqrt{\frac{2\ga-(\ga-1)\kappa^2}{\ga-1+2\kappa^2}} \in (1, \sqrt{\frac{2\ga}{\ga-1}}]$;		
		\item[(ii)] $\Jp(0,0) = (\frac{-2}{\ga+1},-\frac{\sqrt{2\ga(\ga-1)}}{\ga+1}) = (V_1, -C_1)=:\tilde P_1$;
		\item[(iii)] For any $(V,C)\in S_U$, $V>\Jp_1(V,C)$ and $C>\Jp_2(V,C)$.
	\end{enumerate}
\end{lemma}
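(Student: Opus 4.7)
The plan is to reduce the claim to a one-parameter problem by passing to coordinates adapted to the pencil of lines through $(-1,0)$. For $(V,C)\in S_U$, set $\kappa:=-C/(1+V)\in[0,1)$ and $\rho:=1+V>0$; for $(V_+,C_+)\in S_L$, set $\kappa_+:=-C_+/(1+V_+)\in(1,\sqrt{2\gamma/(\gamma-1)}]$ and $\rho_+:=1+V_+>0$. In either case $(V,C)\leftrightarrow(\kappa,\rho)$ is a bijection onto its parameter rectangle, and the rays $C=-\kappa(1+V)$ become horizontal lines in $(\kappa,\rho)$-coordinates.

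The first and main step is to prove (i), which forms the algebraic backbone of the whole argument. Substituting $C=-\kappa\rho$ into the first identity of~\eqref{Id: Jp} gives
\begin{equation*}
\rho_+=\rho\cdot\frac{\gamma-1+2\kappa^2}{\gamma+1}.
\end{equation*}
Inserting this into $\mathcal{J}_2^2=C^2+\tfrac{\gamma-1}{2}(\rho^2-\rho_+^2)$ and dividing by $\rho_+^2$, both sides become rational expressions in $\kappa^2$ that can be matched by a direct expansion, yielding
\begin{equation*}
\kappa_+^2=\frac{\mathcal{J}_2(V,C)^2}{\rho_+^2}=\frac{2\gamma-(\gamma-1)\kappa^2}{\gamma-1+2\kappa^2}=\mathcal{I}(\kappa)^2.
\end{equation*}
Thus the jump map decouples in these coordinates as $(\kappa,\rho)\mapsto(\mathcal{I}(\kappa),\tfrac{\gamma-1+2\kappa^2}{\gamma+1}\rho)$, which establishes (i).

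Bijectivity then follows from (i) together with monotonicity of $\mathcal{I}$. Differentiating,
\begin{equation*}
\frac{d}{d(\kappa^2)}\mathcal{I}(\kappa)^2=-\frac{(\gamma+1)^2}{(\gamma-1+2\kappa^2)^2}<0,
\end{equation*}
and combined with $\mathcal{I}(0)=\sqrt{2\gamma/(\gamma-1)}$ and $\lim_{\kappa\to 1^-}\mathcal{I}(\kappa)=1$, this shows $\mathcal{I}$ is a strictly decreasing bijection $[0,1)\to(1,\sqrt{2\gamma/(\gamma-1)}]$. For each fixed $\kappa$, the scaling $\rho\mapsto\tfrac{\gamma-1+2\kappa^2}{\gamma+1}\rho$ is a bijection of $(0,\infty)$, so the composed map is a bijection of the parameter rectangles. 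Undoing the polar change of variables gives that $\mathcal{J}:S_U\to S_L$ is well-defined and bijective.

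Property (ii) reduces to the substitutions $\mathcal{J}_1(0,0)=\tfrac{\gamma-1}{\gamma+1}-1=-\tfrac{2}{\gamma+1}=V_1$ and $\mathcal{J}_2(0,0)=-\sqrt{\tfrac{\gamma-1}{2}(1-(\tfrac{\gamma-1}{\gamma+1})^2)}=-\tfrac{\sqrt{2\gamma(\gamma-1)}}{\gamma+1}=-C_1$. Property (iii) is read off directly from Step 1: since $\kappa\in[0,1)$,
\begin{equation*}
V-\mathcal{J}_1(V,C)=\rho-\rho_+=\rho\cdot\frac{2(1-\kappa^2)}{\gamma+1}>0,
\end{equation*}
and
\begin{equation*}
\mathcal{J}_2^2-C^2=\tfrac{\gamma-1}{2}(\rho^2-\rho_+^2)=\tfrac{\gamma-1}{2}\rho^2\cdot\frac{4(1-\kappa^2)(\gamma+\kappa^2)}{(\gamma+1)^2}>0,
\end{equation*}
which, together with $C\le 0$ and $\mathcal{J}_2\le 0$, yields $C>\mathcal{J}_2(V,C)$. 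The only computationally nontrivial step is the polynomial identity in Step 1 identifying $\mathcal{I}(\kappa)$; everything else reduces to one-variable monotonicity and positive rescalings.
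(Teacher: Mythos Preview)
Your proof is correct. The key computational identity $\kappa_+^2=\mathcal{I}(\kappa)^2$ checks out, and the decoupling $(\kappa,\rho)\mapsto(\mathcal{I}(\kappa),\tfrac{\gamma-1+2\kappa^2}{\gamma+1}\rho)$ cleanly yields well-definedness and bijectivity in one stroke.

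Your route differs from the paper's. The paper argues in the original $(V,C)$ coordinates: it first verifies well-definedness by checking the two inequalities $\mathcal{J}_2^2-(1+\mathcal{J}_1)^2>0$ and $\tfrac{\gamma-1}{2\gamma}\mathcal{J}_2^2-(1+\mathcal{J}_1)^2\le 0$ directly, then proves injectivity by assuming two preimages and deriving a linear relation forcing them to coincide, and only afterwards remarks that (i) and (ii) follow by direct computation and give surjectivity. Your approach inverts this order: by passing to the polar-type variables $(\kappa,\rho)$ adapted to the pencil of rays through $(-1,0)$, you make (i) the backbone and obtain bijectivity from the monotonicity of a single scalar function $\mathcal{I}$. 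This is more structural and avoids the paper's separate injectivity computation; the paper's approach is slightly more hands-on but requires no coordinate change. Both rely on the same algebraic content, but your presentation exposes why the map is a bijection rather than just verifying it.
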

\begin{proof}
We first show that $\Jp$ is well-defined from $S_U$ to $S_L$. Let $(V,C)\in S_U$. By using \eqref{jump condition}, we obtain
\begin{align*}
    \Jp_2^2(V,C)-(1+\Jp_1(V,C))^2 & = C^2 + \frac{\gamma-1}{2}[(1+V)^2-(1+\Jp_1(V,C))^2]  -(1+\Jp_1(V,C))^2\\
    & = \cfrac{3-\gamma}{1+\gamma}C^2+\cfrac{\gamma-1}{\gamma+1}(1+V)^2-\cfrac{2C^4}{(\gamma+1)(1+V)^2}\\
    &> \cfrac{3-\gamma}{1+\gamma}C^2+\cfrac{\gamma-1}{\gamma+1}(1+V)^2-\cfrac{2}{(\gamma+1)}C^2 = \cfrac{\gamma-1}{\gamma+1}[(1+V)^2-C^2]>0.
\end{align*}
Also, it is easy to check that 
\begin{align*}
    \frac{\ga-1}{2\ga}\Jp_2^2(V,C)-(1+\Jp_1(V,C))^2    & =-\frac{\ga-1}{2\ga}C^2-\cfrac{C^4}{\ga(1+V)^2}\leq 0
\end{align*}
where the equality holds when $C=0$. Hence, $\Jp$ is a well-defined map from $S_U$ to $S_L$.
Next, we show that $\Jp$ is injective. Suppose that there are $(V,C)$ and $(\tilde V,\tilde C)$ such that the corresponding jump points  $\Jp(V,C)$ and $\Jp(\tilde V,\tilde C)$ are same. By using \eqref{jump condition}, we obtain
\begin{align*}
	(1+\Jp_1(V,C))(V-\tilde V) = \frac{2}{\ga+1}\Big[\frac{\ga-1}{2}[(1+V)^2-((1+\tilde V)^2)]+C^2-\tilde C^2\Big]=0.
\end{align*}
Since $\Jp_1(V,C)\neq -1$, it follows that $(V,C)=(\tilde V,\tilde C)$. 
$(i)$ and $(ii)$ follow by direct computation and imply that $\Jp$ is also surjective, hence $\Jp$ is bijective from $S_U$ to $S_L$. For $(iii)$, by using \eqref{jump condition}, we obtain
	\begin{align*}
		\Jp_1(V,C)- V = (1+\Jp_1(V,C))- (1+V) = \frac{2(1+V)}{\ga+1} [\frac{C^2}{(1+V)^2}-1]<0
	\end{align*}
	where we used $(V,C)\in S_U$ in the last inequality. Together with \eqref{jump condition}, this also implies
	\begin{align*}
		\Jp_2^2(V,C)-C^2 = \frac{\ga-1}{2}[(1+V)^2-(1+\Jp_1(V,C))^2]>0. 
	\end{align*}\end{proof}

Before we show that the jump locus of the solution trajectory through the origin connects to $V_\infty(C)$, we first require an understanding of the location of $P_5$ relative to the jump locus. This is the content of the next lemma.
\begin{lemma}\label{pre jump of P_5}
	For $\ga\in[\ga_g,3]$ and $z\in(z_g,z_M]$, $P_5=(V_4,C_5)$ is above the jump locus $\Jp( -\frac{2}{3}C^2,C)$ of the barrier function $B(C) = -\frac{2}{3}C^2$ in the $C$ coordinates.
\end{lemma}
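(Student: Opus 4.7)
The plan is to reformulate the claim as an analytic inequality for the squared $C$-coordinate of the jump locus, then handle the endpoint values, and finally control the interior by polynomial methods.

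The claim that $P_5 = (V_4, C_5)$ lies above the jump locus in the $C$-direction is equivalent to $\Jp_2(-\tfrac{2}{3}C_-^2, C_-) < C_5$ for every $C_- \in [C_5, 0)$, since this would place the entire jump locus beneath the horizontal level $C = C_5$. Setting $u = C_-^2$ and $y = 1 - \tfrac{2u}{3}$, a direct computation from the Rankine--Hugoniot relations \eqref{jump condition} yields
\[
\phi(u) \;:=\; \Jp_2^2(-\tfrac{2u}{3},\sqrt{u}) \;=\; u \;+\; \frac{2(\gamma-1)(y^2 - u)(\gamma y^2 + u)}{(\gamma+1)^2 y^2},
\]
so (since both $\Jp_2$ and $C_5$ are negative) the task reduces to proving $\phi(u) > C_4^2$ on $[0, C_4^2]$.

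I would first verify the two endpoint values. At $u = 0$, one has $\phi(0) = \tfrac{2\gamma(\gamma-1)}{(\gamma+1)^2} = C_1^2$, and the elementary bound $C_1 \geq \tfrac{\sqrt{2\gamma_g(\gamma_g - 1)}}{\gamma_g+1} > 2/3 > C_4$ (the last inequality from Lemma~\ref{C4<2/3}) gives $\phi(0) > C_4^2$. At $u = C_4^2$, one needs $y^2 > C_4^2$, which follows from $C_4 < 2/3 < \tfrac{\sqrt{33}-3}{4}$; then $\phi(C_4^2) - C_4^2 = \tfrac{2(\gamma-1)(y^2 - C_4^2)(\gamma y^2 + C_4^2)}{(\gamma+1)^2 y^2} > 0$ follows immediately from the formula.

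The hard part is the interior bound, because $\phi$ is neither monotone nor of constant concavity as $\gamma$ varies over $[\gamma_g,3]$ -- indeed one computes that the sign of $\phi'(0)$ tracks $-11\gamma^2 + 26\gamma - 3$, which changes sign near $\gamma \approx 2.4$. My strategy would be to clear denominators and set $P(u;\gamma,z) := (\gamma+1)^2 y^2 [\phi(u) - C_4^2]$, a polynomial of degree four in $u$, and then apply the Fourier--Budan theorem (used repeatedly in the paper, e.g.\ in Lemma~\ref{one zero-1}) to count the roots of $P$ in $(0, C_4^2)$; combined with the positive endpoint values $P(0), P(C_4^2) > 0$ established above, this precludes a sign change on the interval and gives $\phi(u) > C_4^2$. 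The delicate point is the uniformity in $(\gamma,z)$, since $C_4^2 = V_4(1+V_4)(\lambda + V_4)/[(m+1)V_4 + 2mz]$ depends on both parameters through $V_4(z)$; I would handle this by exploiting the uniform bound $C_4 < 2/3$ from Lemma~\ref{C4<2/3} together with the monotonicity of $V_4$ in $z$ established in Section~\ref{sec:maximal}, which should reduce the root-count to a finite boundary check.
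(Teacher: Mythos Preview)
Your reduction to the inequality $\phi(u)>C_4^2$ on $[0,C_4^2]$ and your two endpoint checks are correct and match the paper's setup. The place where your plan diverges from the paper, and where it goes wrong, is the interior step.

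Your claim that $\phi$ fails to be monotone rests on the zero of $-11\gamma^2+26\gamma-3$, which lies near $\gamma\approx 2.24$. But the hypothesis is $\gamma\in[\gamma_g,3]$ with $\gamma_g\in(\tfrac52,3)$, so $11\gamma^2-26\gamma+3>0$ throughout the relevant range and $\phi'(0)<0$ always. The paper pushes this further: parametrising by $V$ (so $u=-\tfrac32 V$) and computing $\frac{d}{dV}\Jp_2^2(V,-\sqrt{-\tfrac32 V})$ explicitly, a short elementary estimate gives
\[
\frac{d\Jp_2^2}{dV}\;>\;\frac{(11\gamma^2-26\gamma+3)(1+V)-3(\gamma^2-5)V}{(\gamma+1)^2}\;>\;0
\]
on the whole interval $V\in(\tfrac{\sqrt{33}-7}{4},0)$, since both $11\gamma^2-26\gamma+3$ and $\gamma^2-5$ are positive for $\gamma\ge\gamma_g>\tfrac52$ while $V<0$. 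Hence $\phi$ is strictly decreasing in $u$, and your own endpoint computation $\phi(C_4^2)>C_4^2$ already closes the argument; no root counting is required. (The paper in fact evaluates at the sonic endpoint, obtaining $\Jp_2^2>(\tfrac{\sqrt{33}-3}{4})^2>\tfrac49>C_5^2$ via Lemma~\ref{C4<2/3}.)

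So the Fourier--Budan step is both unnecessary and, as you acknowledge, left only at the level of a sketch---the uniformity in $(\gamma,z)$ that you flag as ``delicate'' would be a genuine technical burden, whereas the monotonicity observation dissolves it entirely. The fix is simply to note that $\gamma_g>\tfrac52$ places you past the sign-change threshold, establish the one-line derivative bound above, and conclude from your right-endpoint check.
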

\begin{proof}
	We first note 
	that the barrier  $V=B(C) = -\frac{2}{3}C^2$ can also be viewed as $C =-\sqrt{ -\frac{3}{2}V}$. Since $C^2=-\frac{3}{2}V$ and $C = -(1+V)$ intersect at the point $(\frac{\sqrt{33}-7}{4},\frac{3-\sqrt{33}}{4})$, in order to prove the statement, it is sufficient to show that $C_5^2<\Jp_2^2(V,-\sqrt{-\frac{3}{2}V})$ in the interval $V \in (\frac{\sqrt{33}-7}{4},0)$. 	 Recalling \eqref{Id: Jp},
	 \begin{align*}
	 	\Jp_2^2(V,-\sqrt{-\frac{3}{2}V}) = \frac{3(\ga^2-6\ga+1)V}{2(\ga+1)^2} +\frac{2\ga(\ga-1)(1+V)^2}{(\ga+1)^2}-\frac{9(\ga-1)V^2}{2(\ga+1)^2(1+V)^2}.
	 \end{align*}
	 We check
	 \begin{align*}
	 \frac{d \Jp_2^2(V,-\sqrt{-\frac{3}{2}V})}{dV} &= \frac{3(\ga^2-6\ga+1)}{2(\ga+1)^2} +\frac{4\ga(\ga-1)(1+V)}{(\ga+1)^2}-\frac{9(\ga-1)V}{(\ga+1)^2(1+V)^3}\\
	 &>\frac{(11\ga^2-26\ga+3)(1+V)-3(\ga^2-5)V}{(\ga+1)^2}>0. 
	 \end{align*}
	 It follows that $ \Jp_2^2(V,-\sqrt{-\frac{3}{2}V})$ is increasing in the interval $V \in (\frac{\sqrt{33}-7}{4},0)$. Hence, $ \Jp_2^2(V,-\sqrt{-\frac{3}{2}V})>(\frac{3-\sqrt{33}}{4})^2>\frac{4}{9}>C_5^2$ by using Lemma \ref{C4<2/3}.
	 \end{proof}
In the following, we use $V=V_{\text{sol}}(C)$ to denote the solution curve initiated from $P_*$ and propagated by \eqref{ODE}, and recall $P_s=(V_s,C_s)$ is the intersection point of $V = V_{\text{sol}}(C)$ and $C = -1-V$.

\begin{lemma}[Existence of $P_H$]\label{PH}
Let $P_*$ be either $P_6$ and $P_8$, and $m=1,2$. Let $\ga\in\Gamma(P_*)$ and $z\in \mathring{\mathcal{Z}}(\ga;P_*)$. There exists at least one $P_H\in S_L\cap \{V>V_8\}$ such that the jump locus of the solution trajectory, $\{\Jp(V_{\text{sol}}(C),C)\mid C\in(C_s,0)\}$, intersects  $V=V_\infty(C)$ at $P_H$.
\end{lemma}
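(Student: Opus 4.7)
The plan is to apply an intermediate value argument to the jump locus, viewed as a continuous curve in $S_L$. Parameterize by the pre-jump $C$-coordinate:
$$\Gamma_J(C) := \mathcal{J}(V_{\text{sol}}(C), C) = (v(C), c(C)), \qquad C \in [C_s, 0],$$
which is continuous by continuity of $V_{\text{sol}}$ and of $\mathcal J$. By Lemma~\ref{1-1 jump point}(ii), $\Gamma_J(0) = \tilde P_1$. At the other endpoint, $(V_s, C_s)$ lies on the sonic line $V=-1-C$, which is the case $\kappa=1$ of Lemma~\ref{1-1 jump point}(i) with $\mathcal I(1)=1$, so $\mathcal J$ fixes $(V_s, C_s)$ and $\Gamma_J(C_s) = (V_s, C_s)$. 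By Proposition~\ref{approach C=-1-V} we have $c(C_s) = C_s < \mathring C$, so the endpoint of the jump locus lies in the half-plane where $V_\infty$ is defined.

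The next step is to define
$$\phi(C) := v(C) - V_\infty(c(C))$$
on the set $\{C\in[C_s,0] : c(C) < \mathring C\}$ and to exhibit two parameter values $C_a, C_b$ with $\phi(C_a)\phi(C_b)<0$; the IVT then produces $P_H$. At the endpoint $C=C_s$, $\phi(C_s) = (-1-C_s) - V_\infty(C_s)$, so the sign of $\phi(C_s)$ is exactly the sign of the difference between the sonic line and $V_\infty$ at $C=C_s$. One then needs to find a second $C$ at which $\phi$ has the opposite sign. Natural candidates are either $C=0$ itself (if $-C_1<\mathring C$, so that $V_\infty(-C_1)$ is defined and lies, as I would verify, above $V_1$), or a parameter value just below the first crossing of $c(C)$ with $\mathring C$ if $-C_1\geq \mathring C$. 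In the latter case, one uses the fact that as $c(C)\downarrow \mathring C$, continuity of $V_\infty$ at $\mathring C$ forces $\phi$ to agree (up to sign) with $v(C) - \mathring V$, whose sign can be read off from where $\Gamma_J$ first meets the vertical line $C=\mathring C$ in $S_L$.

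The sign analysis splits naturally by the choice of $\mathring P$. When $\mathring P = P_9$, both $\mathring P$ and $(V_s, C_s)$ lie on the sonic line; a linearization of \eqref{ODE} at the triple point $P_9$ (using $F=G=D=0$ there, together with the constraints $V_G(C)<V_\infty(C)<V_F^+(C)$ from Lemma~\ref{P8-P_infty}) pins down the slope of $V_\infty$ at $\mathring C$ and hence whether the sonic line sits above or below $V_\infty$ immediately below $\mathring C$; this gives the sign of $\phi(C_s)$. When $\mathring P = P_5$, this point is strictly above the sonic line and I would instead invoke Lemma~\ref{pre jump of P_5}, which tells us that $P_5$ lies above the jump locus of the barrier $B(C) = -\tfrac{2}{3}C^2$; combined with the fact that $V_{\text{sol}}(C)$ lies above this barrier (Lemma~\ref{barrier P5}), a monotonicity of $\mathcal{J}_1$ in $V$ on the relevant subregion of $S_U$ transfers the inequality to $\Gamma_J$ itself, showing $P_5$ lies above $\Gamma_J$ near $C=\mathring C$, so that $V_\infty$, entering $\mathring P = P_5$ from below, is forced to cross $\Gamma_J$.

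The main obstacle is the sign determination of $\phi(C_s)$ in the $P_9$ case, where $V_\infty$ and the sonic line meet tangentially at a triple point; this requires a linearization of \eqref{ODE} at $P_9$ and an identification of the eigendirection corresponding to $V_\infty$. The rest is a straightforward IVT once the two signs are in hand, with monotonicity-in-$V$ of $\mathcal{J}_1$ (derivable from the explicit form in~\eqref{Id: Jp}) being used to transfer barrier inequalities for $V_{\text{sol}}$ to inequalities for $\Gamma_J$.
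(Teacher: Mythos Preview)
Your intermediate-value strategy is correct in outline, and the paper also runs a continuity argument on the jump locus between its endpoints $\tilde P_1=\mathcal J(0,0)$ and $P_s=\mathcal J(V_s,C_s)=(V_s,C_s)$. The key difference is in the choice of separating object. You work directly with $\phi(C)=v(C)-V_\infty(c(C))$, which forces you to handle the fact that $V_\infty$ is only defined for $C<\mathring C$, hence your case split on $-C_1$ versus $\mathring C$ and your plan to linearize at $P_9$. The paper instead reads off the signs of $G$ and $F$ at the two endpoints: $G(\tilde P_1)<0$, $F(\tilde P_1)<0$ (by reflection of $P_1$), while $G(P_s)>0$, $F(P_s)>0$ (since on the lower sonic line with $C<C_9$ one computes $(1+V)^2f_1-f_2<0$, hence $F>0$, and similarly for $G$). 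Because $V_\infty$ is trapped between the globally defined zero sets $V_G$ and $V_F^+$ for all $C<\mathring C$ (Lemma~\ref{P8-P_infity}), a continuous curve passing from $\{G<0,\,F<0\}$ to $\{G>0,\,F>0\}$ is forced across $V_\infty$ once one checks the crossings of $V_G$ and $V_F^+$ happen at $C$-values below $\mathring C$. This removes the domain obstruction entirely and avoids any local analysis at $\mathring P$.

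Your linearization at $P_9$ has a gap as stated: it can only tell you the relative position of $V_\infty$ and the sonic line \emph{immediately below} $\mathring C$, not at $C_s$, which need not be close to $C_9$. What you actually need is that the sonic line and $V_\infty$ do not cross anywhere in $(C_s,C_9)$; this is precisely what the elementary sign computation $F>0$ along the sonic line below $C_9$ delivers (placing the sonic line to the right of $V_F^+$, hence of $V_\infty$, for every such $C$). So the linearization is both harder and insufficient; the $F$-sign argument replaces it.

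In the $\mathring P=P_5$ case you have the right idea of transferring the barrier inequality from $-\tfrac23 C^2$ to $V_{\text{sol}}$ via monotonicity of the jump map, but it is $\mathcal J_2$, not $\mathcal J_1$, that is monotone in $V$ on $S_U$: one computes $\partial_V(\mathcal J_2^2)=\frac{4(\gamma-1)}{(\gamma+1)^2}\big(\gamma(1+V)+\tfrac{C^4}{(1+V)^3}\big)>0$, whereas $\partial_V\mathcal J_1=\frac{\gamma-1}{\gamma+1}-\frac{2C^2}{(\gamma+1)(1+V)^2}$ changes sign in $S_U$. The paper uses exactly this monotonicity of $\mathcal J_2$ (implicitly) together with Lemma~\ref{pre jump of P_5} and Lemma~\ref{1-1 jump point}(iii) to keep the jump locus entirely below $C_5$.
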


\begin{proof}
From Proposition \ref{approach C=-1-V}, $V_{\text{sol}}(C)$ has a unique intersection with $C = -1-V$ at $P_s=(V_s,C_s)$ with $C_s< \Cg$. 
Since $\Jp(V_s,C_s)=(V_s,C_s)$ and $\Jp(0,0) = \tilde P_1$, the jump locus $\Jp(V_{\text{sol}}(C),C)$ connects $\tilde{P}_1$ and $P_s$.
As $\tilde{P}_1$ is the reflection of $P_1$ about the $V$-axis, we have $G(V_1,-C_1)<0$ and $F(V_1,-C_1)<0$. Moreover, the fact that $C_s<\Cg$ and $P_s$ lies on $C = -1-V$ implies $G(V_s,C_s)>0$ and $F(V_s,C_s)>0$ by Lemma \ref{L:FG=0}.  Hence, $\Jp(V_{\text{sol}}(C),C)$ intersects both the curves $V_G$ and $V_F^+$. From Lemma \ref{P8-P_infity}, $V_\infty$ is in between $V_G$ and $V_F^+$ in the interval $C\in(-\infty, \Cg)$. Given the definition of $\Pg$ in \eqref{Pg} and together with Lemma \ref{1-1 jump point}, it suffices to show that $\Jp(V_{\text{sol}}(C),C)$ intersects both $V_G$ and $V_F^+$ at points with $C$ values less than $\Cg$. We will discuss the cases separately for $\Pg=P_9$ and $\Pg=P_5$.

When $\Pg=P_9$, the three curves $C = -(1+V)$, $V_F^+$ and $V_G$ intersect at $P_9$. It follows that $\Jp(V_{\text{sol}}(C),C)$ intersects $V_\infty$ at some points $P_H$ with $C_H<C_9$ by Lemma \ref{1-1 jump point}. 

In the case $\Pg = P_5$, by Lemma \ref{pre jump of P_5}, $P_5$ is above $\Jp(V_{\text{sol}}(C),C)$ in the $C$ coordinate in the interval $C\in(C_5,0)$. For  $C\in(C_s,C_5)$, we apply Lemma \ref{1-1 jump point}\textit{(iii)} to conclude that the part of $\Jp(V_{\text{sol}}(C),C)$ in the interval $C\in(C_s,C_5)$ cannot cross the line $C=C_5$. Hence,  $\Jp(V_{\text{sol}}(C),C)$ intersects $V_\infty$ at some point $P_H$ with $C_H<C_5$.
\end{proof}

\begin{proposition}\label{xs}
	Let $P_*$ be either $P_6$ and $P_8$, and $m=1,2$. Let $\ga\in\Gamma(P_*)$ and $z\in \mathring{\mathcal{Z}}(\ga;P_*)$. There exists $x_H\in(0,\infty)$ such that at $x=x_H$, the solutions $(V(x),C(x))$ to \eqref{ODE system}, provided by Theorem \ref{approach C=-1-V}, jump to $(V_\infty(\Jp_2(V(x_H),C(x_H))),\Jp_2(V(x_H),C(x_H)))$ by \eqref{jump condition}. 
\end{proposition}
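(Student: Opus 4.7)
The plan is to translate the existence of the jump point $P_H$ from Lemma~\ref{PH}, which is stated in the $(V,C)$-plane, into the $x$-parametrization given by the ODE system \eqref{ODE system}. First I would set $P_{\rm pre} := \Jp^{-1}(P_H) = (V^\ast, C^\ast)$ on the smooth trajectory $V_{\text{sol}}$, with $C^\ast \in (C_s, 0)$; this is well-defined by Lemmas~\ref{PH} and~\ref{1-1 jump point}. Since the Lax condition \eqref{supsoundspeed} is strict, $P_{\rm pre}$ lies in the interior of $S_U$, so $D(P_{\rm pre}) > 0$.

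The key observation is that $x \mapsto C(x)$ is a strictly monotone decreasing bijection from some interval $(0, x_s)$ onto $(C_s, 0)$, where $x_s$ denotes the first $x$-value at which the trajectory reaches the sonic line. Indeed, the barrier analysis in Section~\ref{sec:maximal}, together with the $C$-monotone structure of the trajectory in the two regions $\{G > 0\}$ and $\{G < 0\}$, shows that $V \geq V_8$ is maintained throughout the smooth portion: $V$ starts at $0$, increases monotonically as $C$ decreases until the trajectory crosses the curve $\{V = V_G^+(C)\}$, and then decreases monotonically to $V_s > V_8$. Hence Lemma~\ref{L:FG=0} yields $F > 0$ along the trajectory, and combined with $D > 0$ off the sonic line we obtain
\begin{equation*}
\frac{dC}{dx} = -\frac{F(V(x),C(x))}{\la\, x\, D(V(x),C(x))} < 0
\end{equation*}
for $x \in (0, x_s)$. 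The $C^1$-extension of $(V(x), C(x))$ through $x=0$ from Lemma~\ref{L: finite slope in x} guarantees $C(x) \to 0^-$ as $x \to 0^+$. I then define $x_H$ as the unique $x$-value with $C(x_H) = C^\ast$, yielding $(V(x_H), C(x_H)) = P_{\rm pre}$ and hence $\Jp(V(x_H), C(x_H)) = P_H = (V_\infty(C_H), C_H)$ with $C_H = \Jp_2(V(x_H), C(x_H))$, as required.

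To confirm $x_H \in (0, \infty)$, I would separate variables. For any reference $x_0 \in (0, x_s)$ with $C_0 := C(x_0)$, rewriting the ODE as $\frac{dx}{x} = -\la \frac{D}{F} dC$ and integrating gives
\begin{equation*}
\ln\frac{x_H}{x_0} = -\la\int_{C_0}^{C^\ast}\frac{D(V_{\text{sol}}(C'), C')}{F(V_{\text{sol}}(C'), C')}\, dC',
\end{equation*}
and the integrand is continuous and bounded on $[\min(C_0, C^\ast), \max(C_0, C^\ast)]$ since both $D$ and $F$ are nonzero there. Hence $x_H$ is finite and positive.

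The bulk of the technical content has already been established in Lemma~\ref{PH}, and the present proposition reduces essentially to a monotonicity plus change-of-variable argument. The only delicate point I anticipate is the verification that the trajectory remains in $\{V \geq V_8\}$ (and therefore $F > 0$) throughout $(0, x_s)$, which follows by combining the lower barrier bounds of Section~\ref{sec:maximal} with the $C$-monotone structure established via Lemma~\ref{L:FG=0}.
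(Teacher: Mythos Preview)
Your proposal is correct and follows essentially the same approach as the paper: the paper's proof simply cites Lemma~\ref{PH} for the existence of $P_H$ and then invokes ``standard ODE theory'' for the finiteness of $x_H$. What you have written is precisely that standard argument spelled out in detail---the monotonicity of $C(x)$ via $F>0$, $D>0$ on $S_U\cap\{V\ge V_8\}$, and the change of variables $\frac{d(\ln x)}{dC}=-\lambda\frac{D}{F}$---which the paper in fact reproduces almost verbatim later, in the proof of Theorem~\ref{main thm}(i).
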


\begin{proof}
  This follows directly from Lemma \ref{PH}. The finiteness of $x_H$ follows from  standard ODE theory.
\end{proof}

Next, we show that $P_H$ is unique when $\ga\in(1,\gaSix]$ and $z\in (z_g(\ga),z_M(\ga)]$. We first derive some properties of the solution curve $V_{\text{sol}}(C)$ and the jump locus of the solution curve $\Jp(V_{\text{sol}}(C),C)$. 

\begin{lemma}\label{1 intersection}
	Consider $P_*=P_6$, and $m=1,2$. Let $\ga\in(1,\gaSix]$ and $z\in (z_g(\ga),z_M(\ga)]$. For each $\kappa\in(0,1)$, the line $C=-\kappa(1+V)$ intersects $\{(V_{\text{sol}}(C),C)\,|\,C\in(C_s,0)\}$ at a unique point. Similarly, the line $C=-\mathcal I(\kappa)(1+V)$, as defined in Lemma \ref{1-1 jump point},  intersects $\{\Jp(V_{\text{sol}}(C),C)\,|\,C\in(C_s,0)\}$ at a unique point.
\end{lemma}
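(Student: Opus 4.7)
My first step is to reduce part (ii) to part (i) using the bijection $\mathcal{J}$ from Lemma~\ref{1-1 jump point}. By part (i) of that Lemma, $\mathcal{J}$ sends the segment $\{C = -\kappa(1+V)\} \cap S_U$ bijectively onto $\{C = -\mathcal{I}(\kappa)(1+V)\} \cap S_L$, and it sends the solution trajectory (which lies in $S_U$) bijectively onto its jump locus (in $S_L$). Intersection points are therefore preserved, and uniqueness in (ii) follows from uniqueness in (i).

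For part (i), I would parametrize the trajectory by $C \in (C_s,0)$ as $(V_{\text{sol}}(C), C)$ and introduce the angular parameter
\[
\kappa(C) := -\frac{C}{1+V_{\text{sol}}(C)},
\]
which records the slope of the ray from $(-1,0)$ through the trajectory point. Using Lemma~\ref{L: finite slope in x} and Proposition~\ref{approach C=-1-V}, $\kappa$ is continuous on $(C_s,0)$ with $\lim_{C\to 0^-}\kappa(C)=0$ and $\lim_{C\to C_s^+}\kappa(C)=-C_s/(1+V_s)=1$ (using $1+V_s=-C_s$). Uniqueness of the intersection with each $L_\kappa$ is thus equivalent to $\kappa:(C_s,0)\to(0,1)$ being a bijection, and, together with the boundary values, this reduces to showing the strict monotonicity $\kappa'(C)<0$.

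A direct computation, using $V_{\text{sol}}'(C)=G/F$ along the trajectory and $F>0$ on $S_U$ (Lemma~\ref{L:FG=0}(i)), gives
\[
\kappa'(C)=\frac{CG-(1+V)F}{F(1+V)^2},
\]
so the task becomes $CG-(1+V)F<0$ along the trajectory. By Lemma~\ref{intersect G=0}, the trajectory crosses the nullcline $V_G^+(C)$ exactly once, partitioning it into a first phase with $V>V_G^+(C)$ and $G<0$, and a second phase with $V<V_G^+(C)$ and $G>0$. In the second phase, $V_{\text{sol}}'(C)=G/F>0$ while $C<0$ and $1+V>0$ (as $V>V_s>-1$), so $CV_{\text{sol}}'(C)<0<1+V$ and the inequality is automatic.

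The substantive work is in the first phase. A direct expansion yields
\[
CG-(1+V)F=C\bigl[C^{2}(m(V+z)-1)+(1+V)h(V)\bigr],\quad h(V):=\tfrac{m(\ga-1)}{2}V^{2}+\bigl(1+\tfrac{m(\ga-1)(1-\ga z)}{2}\bigr)V+(1+m\ga z).
\]
Since $\ga\in(1,\gaSix]$ and $\ga z\leq\ga z_M<1$, every coefficient of $h$ is strictly positive, so $(1+V)h(V)>0$ for $V\geq 0$. As $C<0$, the desired inequality is equivalent to $C^{2}(m(V+z)-1)+(1+V)h(V)>0$, which holds automatically whenever $m(V+z)\geq 1$. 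In the complementary regime $m(V+z)<1$, which is the main technical obstacle, I would invoke the upper bounds on $|C|$ along the first-phase trajectory implicit in the barriers of Section~\ref{sec:maximal} (Lemmas~\ref{lower barrier when m=1,2} and~\ref{m2 small ga and z}, which jointly cover $\ga\in(1,\gaSix]$, $z\in(z_g,z_M]$), combined with the positivity of $h(V)$, to reduce the estimate to a polynomial sign condition in $(V,C)$ amenable to the Fourier--Budan technique used repeatedly in Section~\ref{sec:maximal}.
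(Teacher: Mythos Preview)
Your reduction of (ii) to (i) via the bijection $\mathcal J$ and your reformulation of (i) as strict monotonicity of $\kappa(C)=-C/(1+V_{\text{sol}}(C))$ match the paper exactly. Your phase analysis is also correct (indeed, near the origin the trajectory has $V>V_G^+(C)$ and $G<0$, so the ``easy'' phase where $G>0$ is the one near $C_s$, not the one near $0$; the paper's labeling of the two intervals is reversed, but its hard-case argument in fact covers the whole range). Your treatment of the second phase and of the sub-case $m(V+z)\ge 1$ is fine.

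The gap is in your handling of the remaining sub-case $m(V+z)<1$ with $V>0$. You propose to use ``the upper bounds on $|C|$ implicit in the barriers of Section~\ref{sec:maximal}''. But Lemma~\ref{lower barrier when m=1,2}, which is the \emph{only} barrier available for $z\in[z_s,z_M]$ (and in particular for all $z\in(z_g,z_M]$ when $\ga\in[\ga_u,\gaSix]$), asserts $V>\hat k C^2$ with $\hat k<0$. For $V>0$ this is vacuous and gives no control on $|C|$ whatsoever, so the Fourier--Budan reduction you sketch cannot get started on that parameter range. The barrier of Lemma~\ref{m2 small ga and z} does give a nontrivial bound, but it only applies for $\ga\in(1,\ga_u)$ and $z\in(z_g,z_0)$, and you do not verify that the resulting bound is strong enough.

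The paper avoids barriers entirely here. Since the trajectory lies in $S_U$, one always has $C^2<(1+V)^2$; with $m(V+z)-1<0$ this yields $p(V,C)>p(V,-(1+V))$, and a short computation gives the exact factorization
\[
p\bigl(V,-(1+V)\bigr)=m(\ga+1)(1+V)(V-V_6)(V-V_8).
\]
Positivity then follows from the single geometric fact $V_{\text{sol}}(C)>V_8$ on $(C_s,0)$, which is a consequence of Proposition~\ref{approach C=-1-V} (the trajectory has its $V$-minimum at the endpoints, and $V_s=-1-C_s>-1-\Cg\ge V_8$). This replaces your proposed barrier/Fourier--Budan argument by a two-line algebraic identity and closes the gap uniformly over the whole parameter range.
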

\begin{proof}
	The second part of the statement follows from the first part and the bijectivity of the jump map $\mathcal{J}$ between $S_U$ and $S_L$ as shown in Lemma \ref{1-1 jump point}. Therefore, we only need to show the first part of the statement.
	
	Since $V_{\text{sol}}(0)=0$, 
	and $V_{\text{sol}}(C_s)=V_s$ lies on $C=-(1+V)$, $V_{\text{sol}}(C)$ intersects $C=-\kappa(1+V)$ at least once for each $\kappa\in(0,1)$. In order to show the intersection is unique, it is equivalent to show that the function $\frac{C}{1+V_{\text{sol}}(C)}$ is monotone increasing with respect to $C$. We compute
	\begin{align*}
		\frac{d}{dC} (\frac{C}{1+V_{\text{sol}}(C)}) = \frac{(1+V_{\text{sol}}(C))F(V_{\text{sol}}(C),C;\ga,z)-CG(V_{\text{sol}}(C),C;\ga,z)}{F(V_{\text{sol}}(C),C;\ga,z)(1+V_{\text{sol}}(C))^2}.
	\end{align*}
	Since $F(V_{\text{sol}}(C),C;\ga,z)>0$ for any $C\in(C_s,0)$ and $G(V_{\text{sol}}(C),C;\ga,z)\geq 0$ in the interval $C\in[C_g,0)$ by Lemma \ref{L:FG=0}, 
	it is clear that $\frac{d}{dC} (\frac{C}{1+V_{\text{sol}}(C)})>0$ in the interval $C\in[C_g,0)$. In the case  $C\in(C_s,C_g)$, $G(V_{\text{sol}}(C),C;\ga,z)< 0$, recalling \eqref{G(V,C)} and \eqref{F(V,C)}, so we have
	\begin{align*}
		\frac{d}{dC} (\frac{C}{1+V_{\text{sol}}(C)})F(V_{\text{sol}}(C),C;\ga,z)(1+V_{\text{sol}}(C))^2=:-\frac{C}{2} p(V_{\text{sol}}(C),C),
	\end{align*}
	where
	\begin{align}
		 p(V,C) = 2(mV+mz-1)C^2 + (1+V)[m(\ga-1)V^2+(2+m(\ga-1)(1-\ga z))V+2+2m\ga z].
	\end{align}
	Since $z\leq z_M = \frac{1}{2+\ga+2\sqrt{2\ga}}$, $1-\ga z\geq 1-\ga z_M = \frac{2+2\sqrt{2\ga}}{2+\ga+2\sqrt{2\ga}}>0$. It is then clear that $p(V,C)>0$ when $V_{\text{sol}}\geq \frac{1-mz}{m}$.
	In the case  $V_{\text{sol}}< \frac{1-mz}{m}$, given that $C^2 < (1+V_{\text{sol}})^2$, we obtain
	\begin{align*}
		p(V_{\text{sol}},C) > p(V_{\text{sol}},-(1+V_{\text{sol}}))
		=m(1+V_{\text{sol}})(\ga+1)[V_{\text{sol}}^2+(1+(2-\ga)z)V_{\text{sol}}+2z].
	\end{align*}
	We claim $p(V_{\text{sol}},-(1+V_{\text{sol}}))>0$. First, the polynomial $q(V) = V^2+(1+(2-\ga)z)V+2z$ is increasing when $V>-\frac{1+(2-\ga)z}{2}$. Moreover, from Proposition \ref{approach C=-1-V}, we have $V_{\text{sol}}>V_s>V_8 = \frac{-1+(\ga-2)z+w}{2}>-\frac{1+(2-\ga)z}{2}$. Hence,  together with $q(V_8)=0$, we derive
	\begin{align}\label{monotone to lines}
		\frac{d}{dC} (\frac{C}{1+V_{\text{sol}}(C)}) >0 , 
	\end{align}
	for any $C\in(C_s,0)$. This completes the proof.
\end{proof}

We are now ready to prove that $P_H$ is unique. 
\begin{lemma}[Uniqueness of $P_H$]\label{uniq PH}
Consider $P_*=P_6$, and $m=1,2$. Let $\ga\in(1,\gaSix]$ and $z\in (z_g(\ga),z_M(\ga)]$.  Then there exists a unique $P_H\in S_L\cap \{V>V_8\}$ such that $\Jp(V_{\text{sol}}(C),C)$ intersects $V_\infty$ at $P_H$.
\end{lemma}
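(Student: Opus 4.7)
\textit{Proof plan.} Existence of $P_H$ is given by Lemma~\ref{PH}; the task is uniqueness. The plan is to parametrize both the jump locus and the trajectory $V_\infty$ by the line parameter $\eta:=-C/(1+V)$ (the parameter associated with the radial lines through $(-1,0)$ used in Lemma~\ref{1 intersection} and Lemma~\ref{1-1 jump point}(i)), and then conclude uniqueness by comparing slopes at any candidate intersection.

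By Lemma~\ref{1-1 jump point}(i) the map $\Jp$ sends the line $\{C=-\kappa(1+V)\}$ in $S_U$ to the line $\{C=-\mathcal I(\kappa)(1+V)\}$ in $S_L$, where $\mathcal I$ is a decreasing bijection $[0,1)\to(1,\sqrt{2\gamma/(\gamma-1)}]$. Combined with Lemma~\ref{1 intersection}, this yields a bijective, monotone parametrization of the jump locus by $\eta\in[1,\sqrt{2\gamma/(\gamma-1)}]$, which I denote $V_{jl}(\eta)$; its endpoints are $P_s$ (at $\eta=1$) and $\tilde P_1$ (at $\eta=\sqrt{2\gamma/(\gamma-1)}$). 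Next I would establish the analogous monotonicity along $V_\infty$. Writing $V_\infty'(C)=F/G$ and differentiating gives
\begin{equation*}
\frac{d}{dC}\!\left(\frac{-C}{1+V_\infty}\right)
=\frac{C\,F(V_\infty,C)-(1+V_\infty)\,G(V_\infty,C)}{(1+V_\infty)^2\,G(V_\infty,C)}.
\end{equation*}
By Lemma~\ref{P8-P_infity} the trajectory lies in the strip $\{V_G(C)<V<V_F^+(C)\}$, and by Lemma~\ref{L:FG=0} we have $G>0$, $F<0$ and $1+V_\infty>0$ there, so the sign of the derivative reduces to the sign of the polynomial $N(V,C):=CF-(1+V)G$. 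I expect $N$ to have a definite sign on the whole strip, so that $\eta$ is strictly monotone along $V_\infty$, giving a single-valued representation $V_\infty=V_2(\eta)$ for $\eta\in(\eta_\circ,\infty)$ with $\eta_\circ:=-\Cg/(1+\Vg)$. One computes $\eta_\circ=1$ when $\Pg=P_9$, and $\eta_\circ=C_4/(1+V_4)<1$ when $\Pg=P_5$ (the inequality following from $P_5\in S_L$), so the $\eta$-ranges overlap on $(1,\sqrt{2\gamma/(\gamma-1)}]$.

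Intersections of the two curves correspond to zeros of $\Phi(\eta):=V_{jl}(\eta)-V_2(\eta)$ on this overlap; Lemma~\ref{PH} gives at least one. To conclude uniqueness I would show $\Phi'(\eta)$ is of fixed sign at every zero: $V_{jl}'(\eta)$ is obtained by pulling back through $\Jp^{-1}$ onto $V_{\text{sol}}$ and applying the chain rule through $\mathcal I$, while $V_2'(\eta)$ comes from the ODE for $V_\infty$. Both reduce to expressions in $F,G$ evaluated at the intersection (respectively at its $\Jp^{-1}$-preimage), and a careful comparison gives the required one-sidedness of $\Phi'$. As a consistency check, the endpoint values satisfy $V_{jl}(1)=V_s>V_8=V_2(1)$ (when $\Pg=P_9$, using $C_s<\Cg$ from Proposition~\ref{approach C=-1-V}), while $V_{jl}(\sqrt{2\gamma/(\gamma-1)})=V_1=-\tfrac{2}{\gamma+1}$, constraining the sign of $\Phi$ near each endpoint of the overlap.

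The main obstacle I anticipate is the sign analysis of $N(V,C)$ along $V_\infty$. Zeros of $N$ correspond to tangencies of ODE trajectories with radial lines through $(-1,0)$; ruling them out along the non-compact range $C\in(-\infty,\Cg)$ is delicate. My approach is to verify the sign of $N$ at the endpoints --- using the asymptotic $V_\infty(C)=\overline V_\infty+O(C^{-2})$ from the remark following Lemma~\ref{P8-P_infity} as $C\to-\infty$, and the explicit value at $\Pg$ --- and then control the intermediate region by an invariance/barrier argument combined with a polynomial sign check, potentially via the Fourier--Budan machinery used elsewhere in the paper.
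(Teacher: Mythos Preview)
Your plan diverges from the paper's argument and carries a genuine gap in addition to a computational slip. The slip: since \eqref{ODE} reads $dC/dV=F/G$, one has $V_\infty'(C)=G/F$, not $F/G$; the correct identity is
\[
\frac{d}{dC}\Big(\frac{-C}{1+V_\infty}\Big)=\frac{C\,G(V_\infty,C)-(1+V_\infty)\,F(V_\infty,C)}{(1+V_\infty)^2\,F(V_\infty,C)},
\]
with $F<0$ in the denominator. On the strip $\{V_G<V<V_F^+\}$ the numerator is a difference of two negative quantities, so its sign is not forced by Lemma~\ref{L:FG=0}, and neither the asymptotics at $P_\infty$ nor an endpoint value at $\Pg$ controls the interior. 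You correctly flag this as the main obstacle, but the step after it is equally undeveloped: even if both curves are single-valued in $\eta$, that alone places no bound on the number of crossings, and your proposed slope comparison (``pull back through $\Jp^{-1}$ and reduce to $F,G$ at the intersection and its preimage'') is not actually carried out.

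The paper takes a different route that avoids $\eta$-monotonicity along $V_\infty$ in the hard case. It argues by contradiction, splitting on the $C$-ordering of a putative second intersection. One case is disposed of via the single-intersection property of the jump locus with radial lines (Lemma~\ref{1 intersection}). All the quantitative work sits in the other case, where the decisive estimate is on the \emph{jump-locus side}: the paper shows that $C\mapsto \Jp_1(V_{\text{sol}}(C),C)$ is strictly decreasing on the whole of $(C_s,0)$. Combined with $V_\infty'(C)<0$, this forces incompatible $V$-orderings on the two supposed intersections. The monotonicity of $\Jp_1$ is obtained by first using the already-established inequality \eqref{monotone to lines}, and then reducing the remaining region (near $\{G=0\}$, where $dV_{\text{sol}}/dC>0$) to the negativity of an explicit polynomial $\mathcal Q(V,C)$; since $\mathcal Q$ is quadratic in $C^2$, it suffices to check its sign on the boundary curves, which are exactly the barriers from Section~\ref{sec:maximal}. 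In short, the proof's hard input is monotonicity of the first jump component in its preimage parameter $C$, not any monotonicity along $V_\infty$; your plan shifts the burden to $V_\infty$, where the needed sign condition looks at least as delicate, and then leaves the closing argument as a sketch.
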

\begin{proof}
We argue by contradiction. Suppose $\Jp(V_{\text{sol}}(C),C)$ intersects $V_\infty$ at two different points $P_{H_1}=(V_{H_1},C_{H_1})$ and $P_{H_2}=(V_{H_2},C_{H_2})$. By Lemma \ref{P8-P_infity},  we have 
 $V_\infty'(C)<0$ within the interval $C\in(-\infty,\Cg)$. Consequently, we deduce that $V_{H_1}\neq V_{H_2}$ and $C_{H_1}\neq C_{H_2}$. Without loss of generality, we consider the corresponding $C$ value of the point $\Jp_2^{-1}(V_{H_2},C_{H_2})$ 
 to be less than the corresponding $C$ value of $\Jp_2^{-1}(V_{H_1},C_{H_1})$. 
In other words, if we trace the path of $\Jp(V_{\text{sol}}(C),C)$ starting from $\tilde{P}_1$, then $P_{H_1}$ is the first intersection point between $\Jp(V_{\text{sol}}(C),C)$ and $V_\infty$. Now we have two possible cases.

\underline{Case I: $C_{H_1}> C_{H_2}$.}  According to Lemma \ref{1-1 jump point} $(i)$, there exists $\kappa_1\in (1, \sqrt{\frac{2\ga}{\ga-1}})$ 
such that $C=-\kappa_1(1+V)$ intersects $\Jp(V_{\text{sol}}(C),C)$ at $P_{H_1}$.
Given the fact that $V_\infty'(C)<0$ and $V_\infty$ connects $\mathring{P}$ and $P_\infty$, $P_{H_2}$ lies below $C=-\kappa_1(1+V)$ on the $C$-coordinates. This implies $P_{H_2}$ lies on a line $C=-\kappa_2(1+V)$ with $\kappa_1<\kappa_2<\sqrt{\frac{2\ga}{\ga-1}}$. Since $\tilde{P}_1$ lies on $C = -\sqrt{\frac{2\gamma}{\gamma-1}}(1+V)$, and  $\Jp(V_\text{sol}(C),C)$ is continuous, there must exist another intersection point, which is distinct from $P_{H_2}$, between $\Jp(V_{\text{sol}}(C),C)$ and $C=-\kappa_2(1+V)$. This contradicts Lemma \ref{1 intersection}.

\underline{Case II: $C_{H_1}< C_{H_2}$.} Given that $V_\infty'(C)<0$ for $C\in (-\infty,\Cg)$, $V_{H_1}>V_{H_2}$. Therefore, there exists a $C_0\in(C_s,0)$ such that  $\frac{d \Jp_1(V_{\text{sol}},C)}{d C}|_{C=C_0}>0$. We will show that $\frac{d \Jp_1(V_{\text{sol}},C)}{d C}<0$ for any $C\in(C_s,0)$ to derive a contradiction.
By using \eqref{monotone to lines}, we obtain
\begin{align*}
	\frac{d}{d C}\Big(\frac{1+\Jp_1(V_{\text{sol}},C)}{ (1+V_{\text{sol}}(C))}\Big) = \frac{d}{d C} \Big(\frac{\ga-1}{\ga+1}+\frac{2}{\ga+1}\frac{C^2}{(1+V_{\text{sol}}(C))^2}\Big) = \frac{2}{\ga+1}\frac{C}{1+V_{\text{sol}}(C)}\frac{d}{d C}\Big(\frac{C}{1+V_{\text{sol}}(C)}\Big)<0.
\end{align*}
On the other hand, we have
\begin{align*}
	\frac{d}{d C}\Big(\frac{1+\Jp_1(V_{\text{sol}},C)}{ (1+V_{\text{sol}}(C))}\Big) =\frac{1}{1+V_{\text{sol}}(C)} \frac{d \Jp_1(V_{\text{sol}},C)}{d C} - \frac{1+\Jp_1(V_{\text{sol}},C)}{(1+V_{\text{sol}}(C))^2} \frac{d V_{\text{sol}}(C)}{d C}. 
\end{align*}
From Lemma \ref{L:FG=0}, $\frac{d V_{\text{sol}}(C)}{d C}\leq 0$ in the interval $C\in[C_g,0)$. Hence, the above identity implies $\frac{d \Jp_1(V_{\text{sol}},C)}{d C}<0$ for $C\in[C_g,0)$. For $C\in(C_s,C_g)$, we compute
\begin{align}\label{Jp_1'}
	\frac{d \Jp_1(V_{\text{sol}},C)}{d C} = \Big(\-\frac{\ga-1}{\ga+1}-\frac{2}{\ga+1}\frac{C^2}{(1+V_{\text{sol}}(C))^2}\Big) \frac{d V_{\text{sol}}(C)}{d C} + \frac{4C}{(\ga+1)(1+V_{\text{sol}}(C))}.
\end{align}
It is clear that $\frac{d \Jp_1(V_{\text{sol}},C)}{d C} <0$ when $\frac{C^2}{(1+V_{\text{sol}}(C))^2} \geq \frac{\ga-1}{2}$, so
 we reduce to studying the part of $V_{\text{sol}}(C)$ for which $(V_{\text{sol}}(C),C)$ lies in the region
\begin{align}
	\mathcal H = \mathcal H_1 \cup \mathcal H_2,
\end{align}
where
\begin{align}
	\mathcal{H}_1&=	\{(V,C) :V\geq 0 \text{ and }\frac{g_2(V)}{g_1(V)}<C^2 < \frac{\ga-1}{2}(1+V)^2\},\\
	\mathcal{H}_2&= \{ (V,C): V_0<V<0 \text{ and } \frac{V}{k(\ga,z)}<C^2<\frac{\ga-1}{2}(1+V)^2\}.
\end{align}
Here $V>k(\ga,z)C^2$ is given by Lemma \ref{lower barrier when m=1,2},  $g_1(V)$ and $g_2(V)$ are defined in \eqref{G(V,C)}, and $V_0$ is the $V$ value such that $\frac{V_0}{k(\ga,z)}=\frac{\ga-1}{2}(1+V_0)^2$.
We shall show that $\frac{d \Jp_1(V_{\text{sol}},C)}{d C}<0$ for $(V_{\text{sol}}(C),C)\in\mathcal H$. From \eqref{Jp_1'} and using $\frac{d V_{\text{sol}}(C)}{d C} >0$ for $(V_{\text{sol}}(C),C)\in\mathcal H$,  we have 
\begin{align*}
	\frac{d \Jp_1(V_{\text{sol}},C)}{d C}&< \frac{\ga-1}{\ga+1}\frac{d V_{\text{sol}}(C)}{d C} +  \frac{4C}{(\ga+1)(1+V_{\text{sol}}(C))}   \\ 	&=\frac{(\ga-1)(1+V_{\text{sol}}(C))^2G(V_{\text{sol}}(C),C)+4C(1+V_{\text{sol}}(C))F(V_{\text{sol}}(C),C)}{(\ga+1)(1+V_{\text{sol}}(C))^2F(V_{\text{sol}}(C),C)}.
\end{align*}
We denote
\begin{align*}
	\mathcal Q(V,C) :=(\ga-1)(1+V)^2G(V,C)+4C(1+V)F(V,C).
\end{align*}
We claim $\mathcal Q(V,C) <0$ for any $(V,C)\in\mathcal H$. Direct computation shows
\begin{align*}
	\mathcal Q(V,C) = &-(1+V)\Big[(m(\ga-1)+5-\ga)V^2+[m(\ga-1-2(\ga^2-2\ga-1)z)+9-\ga]V+2m(\ga+1)z+4\Big]C^2\\
	&+4(1+mz +V)C^4-(\ga-1)(1+V)^3(1+V+m\ga z)V.
\end{align*}
As  $\mathcal Q(V,C)$ is a quadratic function of $C^2$ with positive leading order coefficient, it is sufficient to verify the sign at the end-points of the $C^2$ intervals defining $\mathcal H_1$ and $\mathcal H_2$. For convenience, due to the complexity of $k(\ga,z) = \frac{V_8(\ga,z)}{C_8^2(\ga,z)}$, we first enlarge $\mathcal H_2$ by observing
\begin{align*}
	\frac{V}{k(\ga,z)} \geq \frac{V}{k(\ga,z_M)} = -\frac{\ga}{2+\sqrt{2\ga}}V > -\frac{\ga V}{4}.
\end{align*}
To show $\mathcal Q(V,C)<0$ in $\mathcal H$, it is therefore sufficient to prove $\mathcal Q(V,-\sqrt{\frac{\ga-1}{2}}(1+V))<0$ for $V\geq  V_0$, $\mathcal Q(V,\sqrt{\frac{V(1+V)(1+m\ga z+V)}{(m+1)V+2mz}})\leq 0$ for $V\geq 0$, and $\mathcal Q(V,-\sqrt{-\frac{\ga V}{4}})<0$ for $V\in(V_0,0)$. As $V_0$ is not a convenient lower bound to work with, we observe that the intersection of the curves $\{C = -\sqrt{\frac{\ga-1}{2}}(1+V)\}$ and $\{C= -\sqrt{-\frac{\ga V}{4} }\}$ with $V\in( -1,0)$ occurs at  $\hat V<V_0$ with $\hat V=\frac{4-5\ga+\sqrt{(9\ga-8)\ga}}{4(\ga-1)}= \frac{4(\ga-1)}{4-5\ga-\sqrt{(9\ga-8)\ga}}>-\frac{2(\ga-1)}{3\ga-2}$.

To show 
$\mathcal Q(V,-\sqrt{-\frac{\ga V}{4}})<0$ with $V\in(V_0,0)$, we write $\mathcal Q(V,-\sqrt{-\frac{\ga V}{4}})=\frac{V}{4} p(V) $ where $p(V)$ is a fourth order polynomial as defined in Lemma \ref{mathcal Q<0}.  The positivity of $p(V)$ in the interval $V\in( \hat V,0)$ is shown in Lemma \ref{mathcal Q<0}.

Next, as  $G(V,\sqrt{\frac{V(1+V)(1+m\ga z+V)}{(m+1)V+2mz}})=0$ and $F(V,C)>0$ on $\mathcal H_1$, we see $\mathcal Q(V,\sqrt{\frac{V(1+V)(1+m\ga z+V)}{(m+1)V+2mz}})\leq 0$ for $V\geq 0$. 

For $\mathcal Q(V,-\sqrt{\frac{\ga-1}{2}}(1+V))$, we compute
\begin{align*}
	\mathcal Q(V,-\sqrt{\frac{\ga-1}{2}}(1+V)) = \frac{\ga-1}{2}(1+V)^3&\Big[[3\ga-9-m(\ga-1)]V^2+\Big[5\ga-15-m[\ga-1+(-2\ga^2+4\ga+4)z]\Big]V\\
	& \ +2\ga-6-4mz\Big] =:\frac{\ga-1}{2}(1+V)^3 q(V).
\end{align*}
We note that $q(V)$ is a quadratic function in $V$ with a negative leading coefficient. We have $q(-1) = 2m\ga(2-\ga)z>0$, and 
\begin{align*}
	q(-\frac{2(\ga-1)}{3\ga-2}) =  \frac{2\ga}{(2-3\ga)^2} \Big[\ga-3+m(\ga-1)^2-2m(3\ga-2)(\ga^2-3\ga+3)z\Big]<0, 
\end{align*}
where we used $\ga-3+m(\ga-1)^2\le \ga-3+2(\ga-1)^2 <0$ for any $\ga\in(1,\gaSix]$ and $\ga^2-3\ga+3>0$ in the last inequality. It follows that $q(V)<0$ for any $V>-\frac{2(\ga-1)}{3\ga-2}$, and hence for $V\geq V_0$. Hence, $\mathcal Q(V,-\sqrt{\frac{\ga-1}{2}}(1+V))<0$  for any $(V,C)\in\mathcal H$.

This completes the proof.
\end{proof}

\section{Proof of the Main Theorem \ref{main thm}}

$(i):$ The real analyticity locally around the origin (i.e. $x=0$) follows from Theorem \ref{analytic} and Lemma \ref{L: finite slope in x}. To see the  existence of the maximal extension of the solution onto $(0,x_{s}]$, we argue as follows. From Proposition~\ref{approach C=-1-V}, the solution trajectories in the $(V,C)$ plane extend from the origin until they intersect the lower sonic line at a point $(-C_s-1,C_s)$ with finite $C_s$. The maximal value $x_s$ is then determined by the relation $C(x_s)=C_s$. To see that $x_s$ is finite, we introduce the variable transformation $y = \ln x$. The monotonicity of $C(x)$ allows us to compute the derivative of $y$ with respect to $C$. This is given by $\frac{dy}{dC} =-\la \frac{D(V(C),C)}{F(V(C),C)}$. Since $F(V(C),C)<0$ and $D(V,C)$ is bounded in $\mathcal S_U\cap \{V\geq V_8\}$, integrating $\frac{dy}{dC}$ backwards from $C_0\in(C_s,0)$ to $C_s$ determines $y_{s}$ to be finite, and hence $x_{s}$ is also finite. To see that $x_s$ gives the maximal \textit{smooth} extension of the flow, observe that, as a function of $x$, trajectories can only pass smoothly through the sonic line with $C<0$ at the triple points $P_7$ and $P_8$. As we have established $C_s<\min\{C_7,C_9\}$, the flow cannot be extended smoothly past $x_s$.
The existence of $C_H\in(-\infty, \Cg)$ is given by Lemma \ref{PH} and the jump location $x_H\in(0,x_s)$ is determined via $\Jp(V(x_H),C(x_H))=(V_H,C_H)$.

\noindent $(ii):$ The existence and finiteness of $x_s$ follows as in part \textit{(i)}. The uniqueness of the intersection point $P_H$ is shown in Lemma \ref{uniq PH}.

\

{\bf Acknowledgements.} JJ and JL are supported in part by the NSF grants DMS-2009458 and DMS-2306910. MS is supported by the EPSRC Post-doctoral Research Fellowship EP/W001888/1.

\begin{appendices}
\section{Proof of Lemma \ref{L:FG=0}} \label{Ap: FG=0}

\textit{{{Proof of (i):}}} We observe first that, for any $(V,C)\in\mathcal S_U$, $C\in(-(1+V),0)$ and $V>-1$. Hence, recalling~\eqref{F(V,C)}, as $f_1(V)>0$,
\begin{align}\label{Ieq: f2(V8)>0}
	\frac{F(V,C)}{C} <\frac{F(V,C)}{C}\Big|_{C=-(1+V)}= \frac{m(\ga-1)}{2}\Big[ -(1+V)^2+[1+(\ga-2)z](1+V)-\ga z\Big].
\end{align}
Given that $F(V_8,1+V_8)=F(V_8,C_8)=0$, and $V_6$ and $V_8$ are two roots of the polynomial $-(1+V)^2+[1+(\ga-2)z](1+V)-\ga z$ with $V_6\leq V_8$, $\frac{F(V,C)}{C}\Big|_{C=-(1+V)} <0$ for any $V\in \mathcal S_U$ with $V\geq V_8$. Hence, $F(V,C)>0$ in $\mathcal S_U\cap\{V\geq V_8\}$.

In the region $\mathcal S_L\cap \{V\geq V_8\}$, we consider the root branch in $C(V)$ form, explicitly given by $C_F(V) = -\sqrt\frac{f_2(V)}{f_1(V)}$. Since $V\in(V_8,0)$, $f_1(V)> 0$ and $f_2(V)>0$ by checking that 
\begin{equation}\label{f2>0}
\begin{aligned}
	&(1+V_8)^2f_1(V_8)-f_2(V_8)=0,\text{ so that } f_2(V_8)>0,\\	
	&f_2'(V) = 2a_1(1+V)-a_2\geq 2a_1[1+V_8(z_M)]-a_2 >\frac{4\sqrt \ga + m(\ga-1)(\sqrt \ga -\sqrt 2)}{2(\sqrt 2+\sqrt \ga)}>0
\end{aligned}
\end{equation}
for any $m=1,2$, $V\in[V_8,0)$, $z\in(0,z_M]$ and $\ga\in(1,3]$. Hence, $f_2(V)>0$ for any $V\in[V_8,0)$. This implies that $C_F(V) = -\sqrt\frac{f_2(V)}{f_1(V)}$ is well-defined in the interval $V\in[V_8,0)$. Now, we shall show that $C_F'(V)<0$. Since $C_F(V)< 0$ for $V\in[V_8,0)$, by using \eqref{f2>0}, we have
\begin{align*}
	C_F'(V) = \frac{1}{2C_F(V)f_1^2(V)}[f_2'(V)f_1(V)+\frac{mz f_2(V)}{(1+V)^2}]<0.
\end{align*}
Hence, by the Inverse Function Theorem, we  conclude that $V_F^+(C)$ satisfies the claimed properties.

\textit{{{Proof of (ii):}}} 
Recalling \eqref{G(V,C)}, $G(V,C)=0$ is given by
\begin{align}\label{G=0}
	&C^2[(m+1)V+2mz]-V(V+1)(V+1+m\ga z)=0.
\end{align}
As $C\to -\infty$, it is straightforward to see that there are exactly three root branches satisfying $\lim_{C\to -\infty}V_G^-(C) = -\infty$, $\lim_{C\to -\infty}V_G(C) = -\frac{2mz}{m+1}$, and $\lim_{C\to -\infty}V_G^+(C) = \infty$. At $C=0$, we have $V_G^-(0) = -1-m\ga z$, $V_G(0)=-1$ and $V_G^+(0)=0$. We are only interested in $V_G(C)$ and $V_G^+(C)$. Moreover, it is clear that there is no corresponding $C$ value for $V\in(-1-m\ga z,-1)\cup[-\frac{2mz}{m+1},0) $. Therefore, the equation $G(V,C)=0$ has exactly three distinct roots branches $V_G^-(C)<V_G(C)<V_G^+(C)$. Now, we check that $V_G'(C)<0$ and $(V_G^+)'(C)<0$ for $C< 0$. By taking the derivative $\frac{d}{dC}$ on both sides of \eqref{G=0}, we obtain
\begin{align*}
	2C[(m+1)V_G(C)+2mz]+((m+1)C^2-3V_G(C)^2-2(2+m\ga z)V_G(C)-1-m\ga z)V_G'(C)=0,
	\end{align*}
	and hence
\begin{align*} V_G'(C) = \frac{-2C[(m+1)V_G(C)+2mz]}{(m+1)C^2-3V_G(C)^2-2(2+m\ga z)V_G(C)-1-m\ga z}=\frac{-2V_G(C)C[(m+1)V_G(C)+2mz]}{-2mz C^2-2V_G^3(C)-(2+m\ga z)V_G^2(C)}<0
\end{align*}
where we used $V_G(C)\in(-1,-\frac{2mz}{m+1})$ in the last equality and last inequality. Hence, $V_G'(C)<0$ for $C< 0$. A similar argument also shows that $(V_G^+)'(C)<0$ for $C< 0$. It only remains to show that $(V_G^+(C),C)\in \mathcal S_U$. This can be verified by examining the asymptotic behavior of $V_G^+(C)$ as $C$ approaches infinity. This behavior  resembles that of a linear function, specifically, $V(C)=-C$.

\section{Quantitative Properties}

\begin{lemma}\label{lower bound of zg}
Let $m=1,2$. For any $\ga\in[\gau,3]$, $z_g(\ga)\geq\frac{1}{10}$.
\end{lemma}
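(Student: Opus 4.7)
Plan: I would prove the lemma by rationalizing the expression for $z_g$ given in \eqref{zg}, reducing $z_g(\ga) \geq 1/10$ to a polynomial inequality in $\ga$, and verifying this inequality on $[\gau, 3]$. The arguments for $m=1$ and $m=2$ are parallel but produce different polynomials, and both rely on the preliminary bound $\gau > 3/2$.

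To establish $\gau > 3/2$, I would compute directly: $z_0(3/2) = 29/250$; for $m=1$, $z_g(3/2) = \tfrac{2}{3}(\sqrt{10}-3)$; for $m=2$, $z_g(3/2) = (7\sqrt{2}-8)/17$. The inequality $z_0(3/2) > z_g(3/2)$ reduces in the two cases to $1587^2 > 2{,}500{,}000$ and $2493^2 > 2\cdot 1750^2$, both immediate. Since $z_0$ is linear with slope $-1/25$ and $z_g$ is strictly increasing on a neighbourhood of $[1, 3/2]$ (starting from $z_g(1)=0 < z_0(1) = 17/125$, which is obvious from the formulas), the difference $z_0-z_g$ is strictly decreasing there and hence $\gau > 3/2$.

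For $m=2$, rationalization yields $z_g(\ga) = \frac{2(\ga-1)}{(2\ga^2-\ga+1) + \sqrt{(2\ga^2-\ga+1)^2 + 2\ga(\ga-1)(4\ga(\ga-1)+8/3)}}$, so $z_g \geq 1/10$ is equivalent to $-2\ga^2 + 21\ga - 21 \geq \sqrt{\cdots}$. The LHS takes the value $6$ at $\ga = 3/2$ and is positive on $[3/2, 3]$, so squaring and simplifying (after dividing out the common factor $2(\ga-1)>0$) reduces the inequality to $(\ga-3)(3\ga^2+36\ga-55) \leq 0$. On $[\gau, 3]$ the first factor is $\leq 0$, and the second is $\geq 23/4 > 0$ (it is increasing on $\ga>-6$ and equals $23/4$ at $\ga = 3/2$), proving the bound, with equality at $\ga = 3$ (where $z_g = 1/10$ exactly for $m=2$).

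For $m=1$, rationalization gives $z_g(\ga) = \frac{\ga-1}{\ga(\ga + \sqrt{2\ga^2-2\ga+1})}$, and the same procedure reduces $z_g \geq 1/10$ to $P_1(\ga) := \ga^3 + 19\ga^2 - 100\ga + 100 \leq 0$ (after extracting the factor $(\ga-1)$ from the quartic produced by squaring). Since $P_1'(\ga) = 3\ga^2 + 38\ga - 100$ has its unique positive root at $\ga^\ast = (-19+\sqrt{661})/3 \approx 2.237$, $P_1$ is decreasing on $[3/2, \ga^\ast]$ and increasing on $[\ga^\ast, 3]$. Hence $\max_{[\gau, 3]} P_1 \leq \max(P_1(3/2), P_1(3)) = \max(-31/8, -2) = -2 < 0$, completing the proof. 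The main obstacle is keeping the polynomial manipulations clean through the squaring step; the key simplification in the $m=2$ case is the factorization $(\ga-3)(3\ga^2+36\ga-55)$, which is natural precisely because the bound is sharp at $\ga = 3$, whereas for $m=1$ one relies instead on a routine derivative-based shape analysis.
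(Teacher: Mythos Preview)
Your approach matches the paper's: rationalize $z_g$ and reduce $z_g\ge 1/10$ to a polynomial sign condition in $\ga$. For $m=2$ you arrive at exactly the same factorization $(\ga-3)(3\ga^2+36\ga-55)$ that the paper obtains. For $m=1$ the paper simply invokes Sturm's theorem (Proposition~\ref{prop:poly}\eqref{A-1}) for $P_1(\ga)=\ga^3+19\ga^2-100\ga+100<0$, whereas you give the more elementary endpoint-plus-derivative check on $[3/2,3]$; that is a genuine simplification.

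One small gap: to place $[\gau,3]\subset[3/2,3]$ you assert that $z_g$ is strictly increasing on $[1,3/2]$, calling it ``obvious from the formulas,'' but this monotonicity is not immediate (the derivative computation is messy). You can bypass it entirely. The defining discussion after \eqref{z0} records that $z_0>z_g$ exactly for $\ga\in(1,\gau)$ and $z_0<z_g$ for $\ga\in(\gau,2)$; this is established in Lemma~\ref{z0=zg}, whose proof does not rely on the present lemma, so there is no circularity. Your directly verified inequality $z_0(3/2)>z_g(3/2)$ then already forces $3/2<\gau$, and the rest of your argument goes through unchanged.
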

\begin{proof}
	When $m=1$, we check
	\begin{align*}
	(z_g(\ga)-\frac{1}{10})\ga(\ga-1) = \sqrt{\gamma^2+(\gamma-1)^2}-\gamma-\frac{\ga(\ga-1)}{10}
	=\frac{-(\ga-1)(\ga^3+19\ga^2-100\ga+100)}{100( \sqrt{\gamma^2+(\gamma-1)^2}+\gamma+\frac{\ga(\ga-1)}{10})}>0,
	
\end{align*}
where we used $\ga^3+19\ga^2-100\ga+100<0$ for $\ga\in[\gau,3]$, which is shown in Proposition~\ref{prop:poly} \eqref{A-1}, to conclude the inequality. 

	When $m=2$, we compute
	\begin{align*}
		(z_g(\ga)-\frac{1}{10})\gamma[4\gamma(\gamma-1)+\frac{8}{3}]
		&= \frac{-6\ga^3-24\ga^2+11\ga-15}{15} + \sqrt{(2\gamma^2-\gamma+1)^2 +2\gamma(\gamma-1)[4\gamma(\gamma-1)+\frac{8}{3}]}.
	\end{align*}
We claim that this expression is positive. Obviously, $-6\ga^3-24\ga^2+11\ga-15<0$. Therefore, it is sufficient to check the negativity of
	\begin{align*}
		q(\ga) &= ( \frac{-6\ga^3-24\ga^2+11\ga-15}{15}  )^2-(2\gamma^2-\gamma+1)^2 -2\gamma(\gamma-1)[4\gamma(\gamma-1)-\frac{8}{3}\\
		& = \frac{4\ga}{225}(\ga-3)[3\ga(\ga-1)+2](3\ga^2+36\ga-55).
	\end{align*}
Since $\gau$ is a root of $3\ga^2+36\ga-55$, and $3\ga^2+36\ga-55\geq 0$ for each $\ga\in[\gau,3]$, we conclude the desired positivity.\end{proof}

\begin{lemma}\label{z0=zg}
Let $m=1,2$. There exists a unique $\ga\in(1,2]$ such that $z_0(\ga)=z_g(\ga)$. Moreover, $z_g<z_0$ when $\ga\in(1,\gau)$ and $z_0\leq z_g $ when $\ga\in[\gau,2]$. Moreover, $\gau\in(\frac{79}{50},\frac{159}{100})$ when $m=1$, and  $\gau\in(\frac{77}{50},\frac{31}{20})$ when $m=2$.
\end{lemma}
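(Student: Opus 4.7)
The plan is to analyze the continuous difference $h(\gamma):=z_0(\gamma)-z_g(\gamma)$ on $[1,2]$, where $z_0(\gamma)=(22-5\gamma)/125$ is linear and $z_g$ is given by \eqref{zg}. Rationalizing the numerator of $z_g$ shows that $z_g$ contains an overall factor $\gamma-1$, so $z_g(1)=0$ and $h(1)=17/125>0$ in both cases $m=1,2$. At $\gamma=2$, one computes $z_g(2)=(\sqrt 5-2)/2$ for $m=1$ and $z_g(2)=3(\sqrt{275/3}-7)/64$ for $m=2$, and the inequality $h(2)<0$ reduces in each case to a single comparison of the form $\sqrt N>M/K$ with $N,M,K\in\mathbb Q_+$, dispatched by one squaring step (for instance, when $m=1$ one needs $\sqrt 5>274/125$, which follows from $5\cdot 125^2=78125>75076=274^2$). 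The intermediate value theorem then gives at least one crossing $\gau\in(1,2)$.

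For uniqueness, the idea is to reduce the equation $h(\gamma)=0$ to a polynomial equation and count roots by the Fourier--Budan theorem, in direct parallel with the polynomial sign checks used elsewhere in the paper. For $m=1$, rearranging $z_0(\gamma)=z_g(\gamma)$ gives $L(\gamma):=\gamma(-5\gamma^2+27\gamma+103)/125=\sqrt{\gamma^2+(\gamma-1)^2}$, with $L>0$ on $[1,2]$, so that squaring introduces no extraneous roots in $(1,2]$. One obtains the sextic
\[
P(\gamma):=\gamma^2(-5\gamma^2+27\gamma+103)^2-15625\bigl(2\gamma^2-2\gamma+1\bigr),
\]
which contains the spurious factor $(\gamma-1)$ coming from the degenerate behaviour at $\gamma=1$ (where multiplying through by $\gamma-1$ loses information); dividing gives a quintic $Q$ whose roots in $(1,2]$ are precisely those of $h$. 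A Fourier--Budan count on $(1,2]$ then shows $Q$ has exactly one sign change there, yielding uniqueness of $\gau$. The case $m=2$ reduces analogously, after one further squaring to remove the nested radical appearing in $z_g$, to a higher-degree polynomial sign-change count on $(1,2]$.

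The sign pattern $z_g<z_0$ on $(1,\gau)$ and $z_0\leq z_g$ on $[\gau,2]$ is then immediate from $h(1)>0$, $h(2)<0$ and the uniqueness of the zero. Finally, to obtain the refined bounds on $\gau$, it suffices to verify
\[
h(79/50)>0>h(159/100) \quad (m=1),\qquad h(77/50)>0>h(31/20) \quad (m=2);
\]
by the uniqueness already proved, these four sign checks locate $\gau$ in the claimed subintervals. Each reduces, after clearing denominators in $z_0-z_g$, to a single one-step square-root inequality at an explicit rational value of $\gamma$, verified by rational arithmetic after squaring.

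The main obstacle is the Fourier--Budan count for the quintic $Q$ (and its $m=2$ counterpart) on $(1,2]$: this is tedious but fully algorithmic in the style of the polynomial inequalities collected in Proposition \ref{prop:poly}, and requires some care to track and divide out the extraneous root at $\gamma=1$ introduced by the squaring step.
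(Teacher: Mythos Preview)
Your approach is essentially the same as the paper's: reduce $z_0(\gamma)=z_g(\gamma)$ to a polynomial equation and count roots on $(1,2]$, then use the unique root together with endpoint sign checks. The paper obtains the quintic (degree seven for $m=2$) by directly rationalising the expression $z_0-z_g$ rather than squaring, which automatically cancels the $(\gamma-1)$ factor and preserves the sign, and it invokes Sturm's theorem (exact root count) rather than Fourier--Budan; these are cosmetic differences only.
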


\begin{proof}
When $m=1$, we check
\begin{align*}
	z_0(\ga)-z_g(\ga) &=\frac{22-5\ga}{125} - \frac{\sqrt{\ga^2+(\ga-1)^2}-\ga}{\ga(\ga-1)} = \frac{\ga(-5\ga^2+27\ga+103)-125\sqrt{\ga^2+(\ga-1)^2}}{125\ga(\ga-1)}\\
	&=\frac{25\ga^5-245\ga^4-546\ga^3+5016\ga^2-15625\ga+15625}{125\ga[\ga(-5\ga^2+27\ga+103)+125\sqrt{\ga^2+(\ga-1)^2}]}.
\end{align*}
By Proposition~\ref{prop:poly} \eqref{L A.2 -1}, the polynomial numerator only has one root in the interval $\ga\in(1,2]$. We denote it by $\gau$ and $\gau\in(\frac{79}{50},\frac{159}{100})$ by Theorem  \ref{Sturm's theorem}.

When $m=2$, we compute
\begin{align*}
&z_0(\ga)-z_g(\ga)\\ &=\frac{22-5\ga}{125} -\cfrac{\sqrt{(2\gamma^2-\gamma+1)^2 +2\gamma(\gamma-1)[4\gamma(\gamma-1)+\frac{8}{3}]}-(2\gamma^2-\gamma+1)}{\gamma[4\gamma(\gamma-1)+\frac{8}{3}]}\\
&=\frac{-60\ga^4+324\ga^3+446\ga^2-199\ga+375-125\sqrt{3}\sqrt{(3\ga-1)^2(4\ga^2-4\ga+3)}}{375\gamma[4\gamma(\gamma-1)+\frac{8}{3}]}\\
&=\frac{8(450\ga^7 -4860\ga^6 +6432\ga^5 +39111\ga^4-207817\ga^3+359749\ga^2-275503\ga+110250)}{375[4\gamma(\gamma-1)+\frac{8}{3}][-60\ga^4+324\ga^3+446\ga^2-199\ga+375+125\sqrt{3}\sqrt{(3\ga-1)^2(4\ga^2-4\ga+3)}]}.
\end{align*}
By Proposition~\ref{prop:poly} \eqref{L A.2 -2}, the polynomial only has one root in the interval $\ga\in(1,2]$. We denote it by $\gau$ and $\gau\in(\frac{77}{50},\frac{31}{20})$ by Theorem  \ref{Sturm's theorem}.
\end{proof}

\begin{lemma}\label{p(z)<0}
For any $\ga\in(1,2]$ and $z\in[z_s,z_M]$, 
\begin{align*}
	p(z)=(\ga-2)^3(\ga-1)z^3+2(2-\ga)(\ga^2+\ga+1)z^2+(\ga^2-4\ga-6)z+1<0.
\end{align*}
\end{lemma}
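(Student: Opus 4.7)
The aim is to show $p(z) < 0$ on $[z_s, z_M]$ by first proving that $p$ is strictly decreasing on $[0, z_M]$, and then verifying $p(z_s) < 0$ at the left endpoint. Differentiating gives
\[
p'(z) = 3(\gamma-2)^3(\gamma-1)z^2 + 4(2-\gamma)(\gamma^2+\gamma+1)z + (\gamma^2-4\gamma-6).
\]
For $\gamma\in(1,2]$ the $z^2$ coefficient is nonpositive, so combined with $z < z_M < 1/5$ one obtains
\[
p'(z) < \frac{4(2-\gamma)(\gamma^2+\gamma+1) + 5(\gamma^2-4\gamma-6)}{5} = \frac{-4\gamma^3 + 9\gamma^2 - 16\gamma - 22}{5}.
\]
The cubic in the numerator is strictly negative on $[1,2]$: its derivative $-12\gamma^2 + 18\gamma - 16$ has negative discriminant and is therefore negative throughout, so the cubic is monotone decreasing, with value $-33$ at $\gamma=1$.

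Since $p' < 0$, we have $p(z) \leq p(z_s)$ on $[z_s, z_M]$, and the problem reduces to verifying $p(z_s) < 0$. This splits according to the definition of $z_s$. When $\gamma \in (1, \gamma_u)$, $z_s = z_0 = (22-5\gamma)/125$ is rational in $\gamma$, so $p(z_0)$ is itself a polynomial in $\gamma$, and its negativity on $(1, \gamma_u)$ should be verifiable via a Sturm-theorem computation (to be added to Proposition~\ref{prop:poly}). When $\gamma \in [\gamma_u, 2]$, $z_s = z_g(\gamma)$ is irrational in $\gamma$ but satisfies an explicit quadratic relation --- for $m=1$, squaring the defining identity yields $\gamma^2(\gamma-1)z_g^2 + 2\gamma^2 z_g - (\gamma-1) = 0$, with an analogous relation for $m=2$ --- so $p(z_g)$ reduces modulo this quadratic to a polynomial of degree at most one in $z_g$ with coefficients polynomial in $\gamma$, and the sign check becomes a finite computation that can again be routed through Proposition~\ref{prop:poly}.

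The main obstacle is the second case. A naive substitution of the crude lower bound $z_g \geq 1/10$ from Lemma~\ref{lower bound of zg} into the monotonicity estimate fails: a direct expansion gives $1000\cdot p(1/10) = (\gamma-2)(\gamma^3 - 25\gamma^2 + 88\gamma - 224)$, which vanishes at $\gamma = 2$ and is in fact nonnegative on $[1,2]$. Consequently the sharper structural information encoded in the quadratic defining $z_g$ is essential; alternatively, one could produce a tighter affine lower bound on $z_g(\gamma)$ on $[\gamma_u, 2]$ and apply the monotonicity argument with that bound. Either route makes the entire proof amount to a finite sequence of explicit polynomial sign verifications in $\gamma$ alone.
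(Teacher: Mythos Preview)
Your monotonicity argument and the treatment of the case $\gamma\in(1,\gamma_u)$ are exactly what the paper does (down to the same algebraic manipulation of $p'$ and the appeal to a Sturm-type check on $p(z_0)$). The divergence is only in the second case $\gamma\in[\gamma_u,2]$.

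Here the paper does \emph{not} reduce $p(z_g)$ modulo the quadratic defining $z_g$. Instead it uses the much cheaper observation that, by Lemma~\ref{z0=zg}, $z_g(\gamma)\geq z_0(\gamma)$ on this range, so $p(z_g)\leq p(z_0(\gamma))$ by the monotonicity you already established. One more monotone step replaces $z_0(\gamma)$ by the fixed number $z_0(8/5)=14/125$, after which $p(z_0(8/5))$ is a degree-four polynomial in $\gamma$ alone whose negativity is shipped to Proposition~\ref{prop:poly}. This is precisely the ``tighter affine lower bound'' alternative you mention in passing at the end; the paper simply commits to it rather than to the algebraic reduction. Your reduction-modulo-the-quadratic plan would also work, but it is heavier: after reduction you get $A(\gamma)z_g+B(\gamma)$, and substituting the explicit radical for $z_g$ leaves an expression $P(\gamma)+Q(\gamma)\sqrt{R(\gamma)}$ whose sign requires a further squaring argument, done separately for $m=1$ and $m=2$.

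One minor correction: your appeal to Lemma~\ref{lower bound of zg} for the bound $z_g\geq 1/10$ is out of range---that lemma is stated only for $\gamma\in[\gamma_g,3]$ with $\gamma_g>5/2$, not for $\gamma\in[\gamma_u,2]$. Your computation that $p(1/10)\geq 0$ on $[1,2]$ is nonetheless correct and makes the intended point that a crude constant lower bound is insufficient.
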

\begin{proof}
	By using $z<z_M<\frac{1}{5}$, we have
	\begin{align*}
		p'(z) &= 3(\ga-2)^3(\ga-1)z^2+4(2-\ga)(\ga^2+\ga+1)z+\ga^2-4\ga-6 \\
		&< \frac{4}{5}(2-\ga)(\ga^2+\ga+1)+\ga^2-4\ga-6 =-\frac{4\ga^3-9\ga+16\ga+22}{5}<0
	\end{align*}
	for any $\ga\in(1,2]$. Hence, for any $\ga\in(1,\gau)$,
	\begin{align*}
		p(z)\leq p(z_0) = \frac{-125\ga^7+2525\ga^6-27310\ga^5+154918\ga^4-487091\ga^3+1084814\ga^2-1166290\ga+217809}{1953125}
	\end{align*}
	which is negative by  Proposition~\ref{prop:poly} \eqref{L A.3-1}.
	
	When $\ga\in[\gau,2]$, by using Lemma \ref{z0=zg} and $z_0(\ga)$ is decreasing in $\ga$, we have $z_g(\ga)\leq z_0(\ga)<z_0(\frac{8}{5})$, therefore:
	\begin{align*}
	p(z)\leq p(z_g)\leq p(z_0)<p(z_0(\frac{8}{5})) = \frac{2744\ga^4-68208\ga^3+317142\ga^2-880880\ga+760577}{1953125}<0,
	\end{align*}
	where the negativity of the numerator is proven in Proposition~\ref{prop:poly} \eqref{L A.3-2}.
\end{proof}
\begin{lemma}\label{w(zs)<1/2}
For any $\ga\in(1,2]$ and $z\in[z_s(\ga),z_M(\ga)]$, $w(z)<\frac{1}{2}$.
\end{lemma}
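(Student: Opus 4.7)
Since $w$ is strictly decreasing on $(0, z_M]$ by \eqref{ineq: V'6>0, V'8<0}, the plan is first to reduce the claim to a single, $\ga$-indexed bound at the lowest admissible $z$, and then to verify that bound by an explicit polynomial inequality. By monotonicity, $w(z) \leq w(z_s(\ga))$ for every $z \in [z_s(\ga), z_M(\ga)]$. For $\ga \in (1, \gau)$, $z_s(\ga) = z_0(\ga)$ by definition, and for $\ga \in [\gau, 2]$, $z_s(\ga) = z_g(\ga) \geq z_0(\ga)$ by Lemma~\ref{z0=zg}. Hence in either case $z_s(\ga) \geq z_0(\ga)$, and another application of monotonicity of $w$ gives $w(z_s(\ga)) \leq w(z_0(\ga))$. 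The problem is therefore reduced to showing
\begin{align*}
w(z_0(\ga))^2 < \tfrac14, \qquad \ga \in (1, 2].
\end{align*}

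Substituting $z_0(\ga) = (22 - 5\ga)/125$ into the explicit formula \eqref{w(z)} and clearing the common denominator $15625$, a direct expansion yields
\begin{align*}
15625 \, w(z_0(\ga))^2 = 25\ga^4 - 320\ga^3 + 2714\ga^2 - 5816\ga + 6561,
\end{align*}
so the desired inequality is equivalent to the single polynomial condition
\begin{align*}
p(\ga) := 100\ga^4 - 1280\ga^3 + 10856\ga^2 - 23264\ga + 10619 < 0 \quad \text{on } \ga \in (1, 2].
\end{align*}
Spot checks give $p(1) = -2969$ and $p(2) = -1125$, both negative, and the monotonicity behaviour of $p$ in between is easily controlled. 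I would finish by invoking a Sturm sequence argument (cf.\ Theorem~\ref{Sturm's theorem}) or by adjoining $p<0$ as one more item to the polynomial catalogue of Proposition~\ref{prop:poly}, in the same style used for the sign checks already established there.

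The main obstacle is the final step: there does not seem to be a slick barrier or monotonicity trick that circumvents the polynomial verification, so the substantive analytic content is essentially the quartic sign check on $[1,2]$. The reduction itself is short and relies only on the two monotonicity facts $w'<0$ and $z_g \geq z_0$ on the relevant $\ga$-ranges, which are both already available in the paper.
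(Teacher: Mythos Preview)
Your proof is correct and essentially identical to the paper's own argument: the paper likewise uses $w'(z)<0$ together with $z_0(\ga)\le z_s(\ga)$ to reduce to $w^2(z_0)<\tfrac14$, obtains the very same quartic $100\ga^4-1280\ga^3+10856\ga^2-23264\ga+10619$, and verifies its negativity on $(1,2]$ via Sturm's theorem (recorded as item~\eqref{L A.4} of Proposition~\ref{prop:poly}).
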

\begin{proof}
	Since $w'(z)<0$ and $z_0(\ga)\leq z_s(\ga)$ for $\ga\in(1,2]$, it is enough to check $w^2(z_0)<\frac{1}{4}$.
	Direct computation shows, for any $\ga\in(1,2]$,
	\begin{align*}
		w^2(z_0(\ga))-\frac{1}{4} = \frac{100\ga^4-1280\ga^3+10856\ga^2-23264\ga+10619}{62500}<0,
	\end{align*}  
	where the negativity of the numerator is shown in Proposition~\ref{prop:poly} \eqref{L A.4}.
\end{proof}

\begin{lemma}\label{w(zs)>2/5}
For any $\ga\in(1,\gau)$, $\frac{2}{5}<w(z_s(\ga))$.
\end{lemma}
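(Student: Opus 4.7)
The plan is to reduce the bound on $w$ to a polynomial sign condition on $\gamma$, and then dispatch it by exactly the same Sturm/Fourier--Budan bookkeeping used throughout this appendix.

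First, since $w'(z) < 0$ by \eqref{ineq: V'6>0, V'8<0} and $z_s(\gamma) = z_0(\gamma) = (22-5\gamma)/125$ for $\gamma \in (1,\gau)$, the claim $w(z_s(\gamma)) > 2/5$ is equivalent to $w^2(z_0(\gamma)) > 4/25$. Substituting the explicit form of $z_0$ into $w^2(z) = 1 - 2(\gamma+2)z + (\gamma-2)^2 z^2$ and clearing denominators by multiplying through by $15625 = 125^2$ turns the inequality into a quartic sign condition in $\gamma$ alone.

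A short expansion using $(\gamma-2)(22-5\gamma) = -5\gamma^2 + 32\gamma - 44$ gives
\[
15625\bigl(w^2(z_0(\gamma)) - \tfrac{4}{25}\bigr) \;=\; 25\gamma^4 - 320\gamma^3 + 2714\gamma^2 - 5816\gamma + 4061 \;=:\; P(\gamma),
\]
so the lemma reduces to establishing $P(\gamma) > 0$ on $(1,\gau)$. This is a polynomial inequality of precisely the form systematically handled in Proposition~\ref{prop:poly}, and the cleanest route is simply to append one further entry to that proposition verifying $P(\gamma) > 0$ on $(1,\gau)$ by the Sturm-sequence / Fourier--Budan method already invoked in Lemmas~\ref{lower bound of zg}, \ref{z0=zg}, \ref{p(z)<0} and \ref{w(zs)<1/2}.

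If one prefers a self-contained derivative argument, note that $P''(\gamma) = 300\gamma^2 - 1920\gamma + 5428$ has discriminant $1920^2 - 4\cdot 300\cdot 5428 < 0$, so $P''$ is uniformly positive and $P'$ is strictly increasing on $\mathbb R$. Combined with $P'(1) = -1248 < 0$ and $P'(2) = 2000 > 0$, this localises the minimum of $P$ on $[1,2]$ to a unique critical point $\gamma^* \in (1,2)$, and since $\gau < 2$ it is enough to verify via a Sturm sequence that $P$ has no real root in $[1,2]$. The only real obstacle is this mechanical polynomial-sign check; there is no conceptual difficulty beyond the algebra already developed for the earlier bounds on $w$ and $z_0$.
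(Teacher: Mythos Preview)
Your proposal is correct and essentially identical to the paper's proof: both compute $w^2(z_0(\gamma))-\tfrac{4}{25}=\tfrac{1}{15625}(25\gamma^4-320\gamma^3+2714\gamma^2-5816\gamma+4061)$ and defer the positivity of this quartic to the Sturm-sequence bookkeeping in Proposition~\ref{prop:poly}. The remark about $w'(z)<0$ is superfluous (you never use it; the squaring is justified by $w(z)\in(0,1)$ on $(0,z_M)$), and your derivative analysis of $P$ is a harmless elaboration beyond what the paper records.
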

\begin{proof}
Direct computation shows
\begin{align*}
	w(z_s(\ga))^2-\frac{4}{25} = \frac{25\ga^4-320\ga^3+2714\ga^2-5816\ga+4061}{15625}>0,
\end{align*}
where the positivity of the numerator is proven in Proposition \ref{prop:poly} \eqref{L A.5}.
\end{proof}

\begin{lemma}\label{p(zs)>0}
Let $m=1,2$. For any $\ga\in(1,\gau)$, \eqref{p(z)>p(zs)>0} holds.
\end{lemma}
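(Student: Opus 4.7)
The plan is to reduce the claim to a polynomial sign verification that can be discharged by the Fourier--Budan/Sturm machinery collected in Proposition~\ref{prop:poly}. Recall from the proof of Lemma~\ref{upper bound of sol} that $p(z) = 2(r_2-r_1)b_3$, so from \eqref{W} we have the explicit form
\begin{align*}
p(z) = 1-\ga + (\ga-1)\ga z - 4z + \tfrac{2(1+m\ga z)}{m}b_1 + (-\ga-1-2b_1) r_2,
\end{align*}
where $b_1$ and $r_2$ are given by \eqref{B} and \eqref{r2}. Since the earlier proof already established $p'(z)<0$ for $\ga\in(1,\gau)$ and we have $z \ge z_g(\ga) \ge z_0(\ga)$ in the regime of interest (with $z_0$ decreasing in $\ga$, so $z_0(\ga)\le z_0(1)$ for $\ga>1$), it suffices to prove the single inequality $p(z_0(1))>0$, i.e.\ $p(\tfrac{17}{125})>0$ for all $\ga\in(1,\gau)$ and both $m=1,2$.

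The first step is to substitute $z = \tfrac{17}{125}$ into $b_1$ and $r_2$. The quantity $b_1$ becomes a rational function of $\ga$ (in fact a constant once $m$ is fixed), while $r_2$ acquires the form $r_2 = \tfrac{1}{2m}\bigl(A_m(\ga) - \sqrt{\Delta_m(\ga)}\bigr)$ with $A_m, \Delta_m$ explicit polynomials in $\ga$. Thus $p(\tfrac{17}{125})$ can be written as
\begin{align*}
p(\tfrac{17}{125}) = \tfrac{1}{N_m}\bigl(P_m(\ga) - Q_m(\ga)\sqrt{\Delta_m(\ga)}\bigr)
\end{align*}
for some positive integer denominator $N_m$ and explicit polynomials $P_m, Q_m$ in $\ga$ (one pair for each $m\in\{1,2\}$). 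The positivity of $p(\tfrac{17}{125})$ is then equivalent to $P_m(\ga) > Q_m(\ga)\sqrt{\Delta_m(\ga)}$.

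To conclude, I would split into two cases according to the sign of $Q_m(\ga)$ on $(1,\gau)$. If $Q_m(\ga)\le 0$ throughout the interval, then since $P_m(\ga)>0$ (to be checked) the inequality is immediate. Otherwise, squaring reduces the problem to the polynomial inequality $P_m(\ga)^2 - Q_m(\ga)^2 \Delta_m(\ga) > 0$ on $(1,\gau)$ (paired with $P_m(\ga)>0$). Both of these polynomial-positivity statements for $m=1,2$ would be added as new items to Proposition~\ref{prop:poly} and verified by the Fourier--Budan theorem or Sturm's theorem, exactly as in Lemmas~\ref{lower bound of zg}--\ref{w(zs)>2/5}. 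The main obstacle is purely bookkeeping: the polynomials $P_m^2 - Q_m^2\Delta_m$ may have moderately high degree (up to around $6$) and large integer coefficients coming from clearing denominators involving $125$ and $(m+1-\tfrac{34m}{125})$, so care will be needed to confirm that $\gau$ (which itself is a root of an auxiliary polynomial by Lemma~\ref{z0=zg}) is bounded away from the zero set of these auxiliary polynomials. This is routinely checked by computing at the known rational enclosures of $\gau$ provided in Lemma~\ref{z0=zg}, namely $\gau\in(\tfrac{79}{50},\tfrac{159}{100})$ when $m=1$ and $\gau\in(\tfrac{77}{50},\tfrac{31}{20})$ when $m=2$.
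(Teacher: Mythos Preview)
Your overall strategy---reduce to the single check $p(z_0(1))>0$, write $p(\tfrac{17}{125})=\tfrac{1}{N_m}\bigl(P_m(\ga)-Q_m(\ga)\sqrt{\Delta_m(\ga)}\bigr)$, then rationalize---is exactly what the paper does. Two slips, however, would derail your write-up as stated.

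First, the ordering $z\ge z_g(\ga)\ge z_0(\ga)$ is backwards: by Lemma~\ref{z0=zg}, for $\ga\in(1,\gau)$ one has $z_g<z_0$, so the relevant $z$ satisfy $z<z_0(\ga)<z_0(1)$. Combined with $p'(z)<0$ this still yields $p(z)>p(z_0(\ga))>p(z_0(1))$, so the reduction target is right, but your justification for it is not.

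Second, and more substantive, your sign case analysis is incomplete. A direct computation (which the paper carries out) gives, for $m=1$,
\[
p(z_0(1))=\frac{1836\ga^2-56125\ga-11594+(91+108\ga)\sqrt{289\ga^2+15844\ga+109156}}{27000},
\]
so in your notation $Q_m=-(91+108\ga)<0$ but also $P_m=1836\ga^2-56125\ga-11594<0$ on $(1,\gau)$ (and similarly for $m=2$). Neither of your two cases covers $P_m<0$, $Q_m<0$; the correct reduction is then $Q_m^2\Delta_m-P_m^2>0$, the \emph{opposite} inequality to the one you wrote. The paper sidesteps Sturm here by observing that $Q_m^2\Delta_m-P_m^2$ factors nicely, e.g.\ as a constant times $(34\ga-57)(108\ga^2-233\ga-125)$ for $m=1$, whose sign is immediate on $(1,\gau)$.
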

\begin{proof}

When $m=1$, we have, for any $\ga\in(1,\gau)$,
\begin{align*}
p(z_0(1)) &=  \frac{1836\ga^2-56125\ga-11594+(91+108\ga)\sqrt{289\ga^2+15844\ga+109156}}{27000}\\
&=\frac{4(34\ga-57)(108\ga^2-233\ga-125)}{(91+108\ga)\sqrt{289\ga^2+15844\ga+109156}-1836\ga^2+56125\ga+11594}>0.

\end{align*}
When $m=2$, for each $\ga\in(1,\gau)$,
\begin{align*}
p(z_0(1)) &=  \frac{5219\ga^2-121500\ga+4749+(239+307\ga)\sqrt{289\ga^2+11594\ga+39531}}{76750}\\
&= \frac{(34\ga-57)(921\ga^2-2046\ga-511)}{(239+307\ga)\sqrt{289\ga^2+11594\ga+39531}-5219\ga^2+121500\ga-4749}>0.

\end{align*}
\end{proof}

\section{Applications of Fourier-Budan Theorem} 
\begin{theorem}\cite[Fourier-Budan Theorem]{Bu07,Fourier20}\label{Fourier-Budan Theorem}
Let $f(x)=0$ be a polynomial equation of degree $n$ with real coefficients. After ignoring all zeros, let $V_c$ denote the number of variations of sign in the sequence $\{f(x),f'(x),f''(x),\dots, f^{(n)}(x)\}$ when $x=c$, where $c$ is a real number. Let $a$ and $b$ be two real numbers and $a<b$. Then the number of roots of $f(x)=0$ in the interval $x\in[a,b]$ is $|V_a-V_b|-2k$, where $k$ is a positive integer or zero.
\end{theorem}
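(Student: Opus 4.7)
The argument is a continuity analysis of the integer-valued function $V_c$ as $c$ varies from $a$ to $b$. Since each derivative $f^{(i)}$ is a polynomial, and $f^{(n)}$ is a nonzero constant, the set of points at which some $f^{(i)}$ vanishes is finite. Away from this set, the signs of all the $f^{(i)}$ are locally constant, hence $V_c$ is piecewise constant. Up to a harmless perturbation we may assume that no $f^{(i)}$ vanishes at the endpoints $a$ or $b$; the general case follows by a limiting argument using the convention in the theorem that zeros in the sign sequence are ignored. The total change $V_a - V_b$ is therefore the sum of the jumps suffered by $V_c$ as $c$ crosses each transition point $c_0 \in (a, b)$, and the strategy is to classify these jumps by parity: jumps at roots of $f$ carry an odd contribution equal to the multiplicity plus an even surplus, while jumps at zeros of intermediate derivatives carry a non-negative even contribution.

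The local analysis at a transition point $c_0$ reduces to a pure sign-pattern calculation. Let $p$ be the smallest index with $f^{(p)}(c_0) = 0$ and $q+1$ the smallest index exceeding $p$ with $f^{(q+1)}(c_0) \ne 0$ (such a $q$ exists since $f^{(n)}$ is a nonzero constant). The block $(f^{(p-1)}, f^{(p)}, \ldots, f^{(q)}, f^{(q+1)})$ decouples from the rest of the sequence, and its sign pattern is completely governed, for $c$ near $c_0$, by the Taylor expansion
\[
f^{(k)}(c) \;=\; \frac{f^{(q+1)}(c_0)}{(q+1-k)!}(c - c_0)^{q+1-k} \;+\; O\!\bigl((c-c_0)^{q+2-k}\bigr), \qquad p \le k \le q.
\]
Thus the signs on $(c_0 - \epsilon, c_0)$ and on $(c_0, c_0 + \epsilon)$ are determined by the parities of $q+1-k$ together with $\mathrm{sgn}\, f^{(q+1)}(c_0)$ and, in the case $p \ge 1$, by $\mathrm{sgn}\, f^{(p-1)}(c_0)$. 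A direct enumeration of the resulting finite list of cases shows that if $p \ge 1$ (so $c_0$ is not a root of $f$) the number of sign variations in this block either is unchanged or decreases by an even amount across $c_0$; whereas if $p = 0$ (so $c_0$ is a root of $f$ of multiplicity $q+1$) the number of sign variations decreases by $q + 1 + 2\ell$ for some integer $\ell \ge 0$, i.e.\ by the multiplicity plus a non-negative even integer.

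The main obstacle is the case $p = 0$: one must verify that the multiplicity $q+1$ appears with the correct parity uniformly in the sign of $f^{(q+1)}(c_0)$ and the flanking behaviour, rather than merely some odd number. Once the two parity statements are established, summing the jumps over all transition points in $(a, b)$ gives
\[
V_a - V_b \;=\; \sum_{\substack{c_0 \in (a,b)\\ f(c_0)=0}} \bigl(m(c_0) + 2\ell(c_0)\bigr) \;+\; \sum_{\substack{c_0 \in (a,b)\\ f(c_0)\ne 0}} 2\ell'(c_0) \;=\; N + 2k,
\]
where $m(c_0)$ denotes the multiplicity of $c_0$ as a root of $f$, $N$ is the total number of roots of $f$ in $[a,b]$ counted with multiplicity, and $k := \sum \ell(c_0) + \sum \ell'(c_0) \ge 0$. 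Since the individual jumps are all non-positive, $V_a \ge V_b$, so $V_a - V_b = |V_a - V_b|$, and rearranging produces the stated identity $N = |V_a - V_b| - 2k$.
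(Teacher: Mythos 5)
The paper does not prove this statement at all: it is quoted verbatim as a classical result with citations to Budan (1807) and Fourier (1820), and is only ever used as a black box (in Lemmas C.1, C.2 and the proof of Lemma 3.4). So there is no in-paper argument to compare against; what you have written is the standard textbook proof of the Budan--Fourier theorem, and its skeleton is sound. Your local block analysis is correct: with $r=q+1-p$ consecutive vanishing derivatives flanked by $f^{(p-1)}(c_0)\neq0$ and $f^{(q+1)}(c_0)\neq0$, the Taylor expansion gives all-equal signs on the right and alternating signs on the left, and the loss of variations across $c_0$ is $r$, $r+1$ or $r-1$ according to the parity of $r$ and the relative sign of $f^{(p-1)}(c_0)f^{(q+1)}(c_0)$ --- always even and non-negative --- while a block beginning at index $0$ loses exactly the multiplicity $q+1$; summing over the (possibly several) blocks at one point and over all transition points yields $V_a-V_b=N+2k$ with $N$ the root count with multiplicity, exactly as you state. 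Two small points deserve attention. First, your opening phrase that roots carry an ``odd contribution'' is a slip (a root of even multiplicity contributes an even amount); the later, precise statement $q+1+2\ell$ is what matters and is correct. Second, the endpoint reduction is glossed: with the ``ignore zeros'' convention one can check that $V_a$ coincides with $\lim_{c\to a^+}V_c$, but $V_b$ need not coincide with $\lim_{c\to b^-}V_c$ when derivatives vanish at $b$, so the ``harmless perturbation'' really amounts to the classical hypothesis $f(a)f(b)\neq0$ (or a short separate argument); since the quoted statement itself is loose on this point, this is a defect inherited from the statement rather than a gap in your argument, but it should be flagged rather than waved through.
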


In any sequence of signs, we shall use $*$ to denote an indeterminate sign or zero.

\begin{lemma}\label{one zero-1}
Let $m=1,2$. For any $\ga\in(1,\gau)$ and $z\in(z_g,\frac{1}{10})$, the polynomial $\Barrier(C)=0$ defined in \eqref{6th poly} only has one zero in the interval $C\in[-1,0]$.
\end{lemma}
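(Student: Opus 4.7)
The plan is to apply the Fourier-Budan Theorem (Theorem \ref{Fourier-Budan Theorem}) directly to the degree-six polynomial $\Barrier(C;m)$ on the interval $[-1,0]$. Since the polynomial has degree six, we must evaluate the seven-term Fourier sequence
$$\bigl\{\Barrier(C;m),\,\Barrier'(C;m),\,\Barrier''(C;m),\,\Barrier'''(C;m),\,\Barrier^{(4)}(C;m),\,\Barrier^{(5)}(C;m),\,\Barrier^{(6)}(C;m)\bigr\}$$
at $C=0$ and $C=-1$ and count the variations of sign $V_0$ and $V_{-1}$. The goal is to prove $|V_{-1}-V_0|=1$, which forces $k=0$ in the Fourier-Budan count and yields exactly one root in $[-1,0]$.

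First I would evaluate the sequence at $C=0$. By inspection of \eqref{6th poly}, the values at $C=0$ are precisely the coefficients (times factorials), so each sign is determined by an explicit polynomial expression in $(\ga,z,m)$. From the proof of Lemma \ref{m2 small ga and z} we already have $\Barrier(0;m) = m(\ga-1)-2-m(\ga^2+\ga-4)z<0$ for the parameter range of interest. The remaining six constant terms are linear in $z$ with coefficients controlled by $\ga\in(1,\gau)\subset(1,\tfrac85)$ and $z\in(z_g,\tfrac1{10})\subset(0,\tfrac15)$; for each of them I would compare the constant-in-$z$ part against the $z$-multiple using the crude bound $z<\tfrac1{10}$ to fix the sign. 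A parallel but slightly longer computation then evaluates the sequence at $C=-1$. Here the positivity of $\Barrier(-1;m) = 2m(3-\ga)z>0$ is already established in Lemma \ref{m2 small ga and z}, and the higher derivatives at $C=-1$ become affine functions of $z$ whose signs can be pinned down on $(z_g,\tfrac1{10})$ by the same argument, treating $m=1$ and $m=2$ separately only if a coefficient changes sign between them.

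Once both sign tuples are in hand, I would read off $V_0$ and $V_{-1}$ and verify $|V_{-1}-V_0|=1$. The Fourier-Budan count $|V_{-1}-V_0|-2k$ then forces $k=0$, giving exactly one root in $[-1,0]$, as required. For robustness across the two values $m=1,2$, I would present the sign table in a single chart, indicating with $*$ any entry whose sign happens not to affect the variation count.

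The main obstacle is purely bookkeeping: guaranteeing that for every one of the seven sequence entries at each of the two points, the sign is unambiguous for every $(m,\ga,z)$ in the admissible range. For the intermediate derivatives $\Barrier^{(3)}$ and $\Barrier^{(4)}$ the leading and subleading terms at $C=-1$ are of comparable magnitude once $z$ approaches $\tfrac1{10}$, so a naive bound like $z<\tfrac1{10}$ may not suffice and one may instead need to exhibit the $\ga$-dependent threshold explicitly, or invoke Proposition \ref{prop:poly} to resolve sign ambiguities. If an ambiguous entry persists, it can be marked $*$ without changing the variation count provided its neighbours force the total. This is the delicate point, but it is ultimately a finite, explicit calculation.
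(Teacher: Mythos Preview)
Your proposal is correct and follows essentially the same approach as the paper: applying the Fourier-Budan Theorem to $\Barrier(C;m)$ on $[-1,0]$, computing the sign sequences of $(\Barrier,\Barrier',\ldots,\Barrier^{(6)})$ at $C=0$ and $C=-1$, and allowing indeterminate entries $*$ where the sign is ambiguous but does not affect the variation count. The paper finds the sign tuples $(-,+,*,-,*,+,+)$ at $C=0$ (so $V_0=3$ regardless of the $*$ entries) and $(+,-,-,+,*,-,+)$ at $C=-1$ (so $V_{-1}=4$), giving $V_{-1}-V_0=1$ and hence exactly one root---precisely the outcome you anticipate.
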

\begin{proof}
A direct computation and simple estimates shows that the sequence of signs for $(\Barrier(0),\ldots,\Barrier^{(6)}(0))$ is $(-,+,*,-,*,+,+)$, where $*$ denotes an indeterminate sign. The number of sign changes, therefore, is $V_0=3$. Similarly, at $C=-1$, we find the sequence of signs is $(+,-,-,+,*,-,+)$, and  thus $V_{-1}=4$. By Theorem \ref{Fourier-Budan Theorem}, we conclude that $\Barrier(C)$ has only one zero in the interval $C\in[-1,0]$.
\end{proof}

\begin{lemma}\label{mathcal Q<0}
Let $m=1,2$. For any $\ga\in(1,2]$ and $z\in(z_g,z_M]$,
\begin{align*}
p(V)=&-4(\ga-1)V^4+[(m-1)\ga^2-(m+11)\ga+16+4m\ga(1-\ga)z]V^3+[(2m-1)\ga^2-(2m+10)\ga+24\\
	&-2m\ga(\ga^2+4\ga-7)z]V^2+[m\ga^2-(3+m)\ga+16-m\ga(2\ga^2+5\ga-16)z]V+4+2m\ga(3-\ga)z
\end{align*}
is positive in the interval $V\in( \hat V,0)$ where $\hat V=\frac{4-5\ga+\sqrt{(9\ga-8)\ga}}{4(\ga-1)}$ is defined in Lemma \ref{uniq PH}.
\end{lemma}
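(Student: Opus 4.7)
The plan is to establish positivity of $p(V)$ on $(\hat V,0)$ via a two-step strategy: verify that both endpoint values are positive, and then rule out interior zeros via the Fourier--Budan theorem. First, at $V=0$ one has $p(0)=4+2m\ga(3-\ga)z>0$ immediately since $\ga\in(1,2]$ and $m,z>0$. For $p(\hat V)$, recall that $\hat V\in(-1,0)$ is defined (in Lemma \ref{uniq PH}) as the intersection of the curves $C^2=\frac{\ga-1}{2}(1+V)^2$ and $C^2=-\frac{\ga V}{4}$. Equating these gives the quadratic constraint
\begin{equation*}
2(\ga-1)\hat V^2+(5\ga-4)\hat V+2(\ga-1)=0,
\end{equation*}
which I would use as a reduction rule to rewrite $p(\hat V)$ as an affine function of $\hat V$ whose coefficients are polynomial in $\ga$ and $z$. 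Substituting the explicit formula $\hat V=(4-5\ga+\sqrt{(9\ga-8)\ga})/(4(\ga-1))$ then reduces positivity on $z\in(z_g(\ga),z_M(\ga)]$ to a one-variable polynomial sign question in $(\ga,z)$, handled by the same Sturm-based techniques used throughout Proposition~\ref{prop:poly}.

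Second, with $p(\hat V)>0$ and $p(0)>0$ in hand, it suffices to show $p$ has no zeros on $(\hat V,0)$. Since $p$ is a quartic in $V$, I would apply the Fourier--Budan theorem (Theorem~\ref{Fourier-Budan Theorem}) to the interval $[\hat V,0]$: computing the sign sequences $\{p^{(k)}(\hat V)\}_{k=0}^{4}$ and $\{p^{(k)}(0)\}_{k=0}^{4}$ for each $m\in\{1,2\}$ separately, it will suffice to show the variation of signs agrees at the two endpoints. The derivatives $p^{(k)}(0)$ are explicit polynomials in $\ga$ and $z$, so their signs are determined by direct estimates using $\ga\in(1,2]$, $z\le z_M<\tfrac15$, and $z>z_g$; the values $p^{(k)}(\hat V)$ are, after using the quadratic reduction above, affine in $\hat V$, and their signs follow from the same reduction together with the bound $\hat V>-\frac{2(\ga-1)}{3\ga-2}$ already exploited in the proof of Lemma~\ref{uniq PH}.

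The main obstacle is not conceptual but combinatorial: every sign condition arising along the way couples the four parameters $\ga,z,m$, and $\hat V(\ga)$, with the constraint $z>z_g(\ga)$ itself algebraic in $\ga$ via \eqref{zg}. Each of the Fourier--Budan sign evaluations (five derivatives, two endpoints, two values of $m$) therefore produces an auxiliary polynomial inequality in $(\ga,z)$ that must be verified in the same spirit as Lemmas~\ref{p(z)<0}--\ref{p(zs)>0}, typically by establishing monotonicity in $z$ (using $z\le z_M<\tfrac15$) and then reducing to a univariate polynomial in $\ga$ whose roots on $(1,2]$ are located by Sturm's theorem (Theorem~\ref{Sturm's theorem}). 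The reduction at $\hat V$ via the quadratic constraint is the cleanest technical step; the routine but voluminous verification of the sign sequences is where the bulk of the effort lies, and, mirroring the structure of the rest of Section~\ref{sec:jump}, the resulting list of univariate polynomial positivity statements would be collected and certified in Proposition~\ref{prop:poly}.
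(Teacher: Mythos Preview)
Your overall plan is sound, but you are making the problem substantially harder than it needs to be in two places, and the paper's route is cleaner on both.

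First, the paper applies Fourier--Budan on the \emph{fixed} interval $[-1,0]$ rather than $[\hat V,0]$. At the constant endpoints $-1$ and $0$, the derivatives $p^{(k)}$ are polynomials in $\ga,z$ alone and their signs are settled by elementary estimates: at $V=-1$ the sequence is $(-,-,+,+,-)$ (so $V_{-1}=2$) and at $V=0$ it is $(+,+,*,*,-)$ (so $V_0\in\{1,3\}$), giving exactly one zero of $p$ on $(-1,0)$. Combined with $p(-1)<0$, $p(0)>0$ and $p(\hat V)>0$, the unique zero must lie in $(-1,\hat V)$, hence $p>0$ on $(\hat V,0)$. By contrast, your plan to compute sign sequences at the irrational point $\hat V$ entangles $\ga,z$ and $\sqrt{(9\ga-8)\ga}$ in every entry, and you would still need to certify that the variation difference is exactly zero (not merely even) to exclude an even number of interior roots---an avoidable complication.

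Second, and more strikingly, the paper obtains $p(\hat V)>0$ essentially for free from the surrounding context. Since $\hat V$ is precisely where the curves $C^2=\tfrac{\ga-1}{2}(1+V)^2$ and $C^2=-\tfrac{\ga V}{4}$ meet, one has
\[
\mathcal Q\bigl(\hat V,-\sqrt{\tfrac{\ga-1}{2}}(1+\hat V)\bigr)=\mathcal Q\bigl(\hat V,-\sqrt{-\tfrac{\ga \hat V}{4}}\bigr)=\tfrac{\hat V}{4}\,p(\hat V).
\]
But the left-hand side equals $\frac{\ga-1}{2}(1+\hat V)^3 q(\hat V)$, and $q(V)<0$ for $V>-\frac{2(\ga-1)}{3\ga-2}$ is established independently (and by elementary means) in the proof of Lemma~\ref{uniq PH}. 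Since $\hat V<0$, this immediately yields $p(\hat V)>0$, with no Sturm computation at all. Your reduction-via-quadratic-constraint plan would in principle work, but it misses this shortcut and replaces a one-line observation with a multi-page verification.
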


\begin{proof}
It is obvious that $\hat V>-1$. We will first show that $p(V)$ only has one zero in the interval $V\in(-1,0)$. We check that at $V=-1$, the the sequence of signs for $(p(-1),\ldots,p^{(4)}(-1))$ is $(-,-,+,+,-)$. Hence, $V_{-1}=2$. And, the sequence of signs at $V=0$ is given by $(+,+,*,*,-)$.
Hence, either $V_0=1$ or $3$. 
For both cases, Theorem \ref{Fourier-Budan Theorem} tells that $p(V)$ has only one zero in the interval $V\in(-1,0)$. Since $p(-1)<0$ and $p(0)>0$, it suffices to check the positivity of $p(\hat V)$. Since  $C = -\sqrt{\frac{\ga-1}{2}}(1+V)$ intersects $C= -\sqrt{-\frac{\ga V}{4} }$ at $V = \hat V$, $\mathcal Q(\hat V,-\sqrt{\frac{\ga-1}{2}}(1+\hat V)) = \mathcal Q(\hat V,-\sqrt{-\frac{\ga \hat V}{4} })$. Hence, $p(\hat V)>0$.
\end{proof}

\section{Polynomials}

In this appendix, we collect the polynomial sign conditions used throughout the paper for regular polynomials. The proofs of all of these inequalities follow from Sturm's Theorem, which we state below for the convenience of the reader.

\begin{theorem}[\cite{Sturm09}, Sturm's Theorem]\label{Sturm's theorem}
	Take any polynomial $p(x)$, and let $p_0(x), \ldots p_m(x)$ denote the Sturm chain corresponding to $p(x)$. Take any interval $(a, b)$ such that $p_i(a), p_i(b) \neq$ 0 , for any $i\in\{0,1,\dots,m\}$. For any constant $c$, let $\sigma(c)$ denote the number of changes in sign in the sequence $p_0(c), \ldots p_m(c)$. Then $p(x)$ has $\sigma(a)-\sigma(b)$ distinct roots in the interval $(a, b)$.
\end{theorem}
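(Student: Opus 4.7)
My plan is to prove the theorem by tracking how the sign-variation count $\sigma(c)$ evolves as $c$ moves continuously from $a$ to $b$, showing that it is locally constant except at roots of $p_0 = p$, where it drops by exactly one. First I would fix notation: recall that the Sturm chain is generated by $p_0 = p$, $p_1 = p'$, and $p_{i+1} = -\mathrm{rem}(p_{i-1},p_i)$, terminating at an index $m$ where $p_m$ is a nonzero constant (after, if necessary, replacing $p$ by $p/\gcd(p,p')$ so that all roots are simple; this is harmless since the theorem counts distinct roots).

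The core structural lemma I would establish is the following trichotomy for the Sturm chain: \emph{(i)} two consecutive polynomials $p_i, p_{i+1}$ cannot vanish simultaneously at any point (otherwise, running the recurrence $p_{i-1} = q_i p_i - p_{i+1}$ forward and backward forces the nonzero constant $p_m$ to vanish, a contradiction); \emph{(ii)} if $p_i(c) = 0$ for some intermediate index $1 \le i \le m-1$, then $p_{i-1}(c)$ and $p_{i+1}(c)$ are both nonzero and of opposite sign — this follows immediately from the defining recurrence evaluated at $c$, which gives $p_{i+1}(c) = -p_{i-1}(c)$; and \emph{(iii)} at a root $c_0$ of $p_0$ (necessarily simple by our reduction), $p_1(c_0) = p'(c_0) \ne 0$.

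With the trichotomy in hand, the heart of the proof is a local analysis of $\sigma(c)$ in a small neighborhood of each candidate point. For each $c_0 \in (a,b)$ at which some $p_i$ vanishes, pick $\varepsilon > 0$ small enough that no other $p_j$ has a root in $[c_0 - \varepsilon, c_0 + \varepsilon]$ and each $p_j$ that is nonzero at $c_0$ keeps a constant sign there. \emph{Case A:} Suppose $p_i(c_0) = 0$ for some intermediate $i$. By (ii) the triple $(p_{i-1}, p_i, p_{i+1})$ has the form $(+, *, -)$ or $(-, *, +)$ on each side of $c_0$, contributing exactly one sign change on each side regardless of the sign of $p_i$; hence $\sigma$ is unchanged across $c_0$. (If several intermediate $p_i$'s vanish simultaneously at $c_0$, the same argument applied block-by-block — using that no two consecutive ones vanish together by (i) — shows no net change.) \emph{Case B:} Suppose $p_0(c_0) = 0$. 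Since $p_1(c_0) \ne 0$ and the root is simple, $p_0$ changes sign across $c_0$ while $p_1$ does not, so the pair $(p_0, p_1)$ goes from opposite signs to equal signs (using that $p'$ dictates the direction of the sign change of $p$), and the sign-variation count of this pair drops from $1$ to $0$. All other adjacent pairs retain their contributions by the above, so $\sigma(c_0 + \varepsilon) = \sigma(c_0 - \varepsilon) - 1$.

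Summing these local contributions along $[a,b]$ yields the telescoping identity $\sigma(a) - \sigma(b) = \#\{\text{distinct roots of } p \text{ in } (a,b)\}$, which is the desired conclusion. The hypothesis $p_i(a), p_i(b) \ne 0$ ensures that the endpoints do not coincide with any transition and so $\sigma(a)$, $\sigma(b)$ are well-defined counts. The main subtlety I anticipate is carefully handling the possibility that several consecutive (but non-adjacent) Sturm functions vanish at the same $c_0$, and the need to argue that, after dividing by $\gcd(p,p')$, the Sturm chain retains the trichotomy above; both are standard and follow from the Euclidean-algorithm structure of the chain together with the fact that $\gcd(p_{i-1}, p_i) = \gcd(p, p')$ for every $i$.
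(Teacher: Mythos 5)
Your proposal is a correct outline of the classical argument, but note that the paper does not prove this statement at all: Theorem C.1 is quoted as a known result with a citation to Sturm's memoir, and is only used as a black box to verify the polynomial sign conditions in Proposition D.1. Your argument is the standard one — the trichotomy for the Sturm chain (no two consecutive members vanish together, $p_{i-1}(c)=-p_{i+1}(c)$ at a zero of an intermediate $p_i$, and $p'\neq 0$ at a simple zero of $p$), followed by the local analysis showing $\sigma$ is constant across zeros of intermediate chain members and drops by exactly one as $c$ increases through a zero of $p$, then telescoping. The only place that deserves slightly more care than your sketch gives is the reduction to the squarefree case: after dividing the chain by $d=\gcd(p,p')$ you no longer have the literal Sturm chain of $p/d$ (its second entry is $p'/d$, not $(p/d)'$), so you need the standard observation that this quotient sequence is a generalized Sturm chain for which the same local analysis applies — in particular, at a simple root $x_0$ of $p/d$ one has $p'(x_0)=d(x_0)\,(p/d)'(x_0)$, so $p'/d$ and $(p/d)'$ agree in sign there and the variation still drops by one. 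With that point made explicit, your proof is complete and consistent with the version of the theorem the paper states (counting distinct roots, with the hypothesis $p_i(a),p_i(b)\neq 0$ guaranteeing the endpoint counts are well defined).
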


\begin{proposition}\label{prop:poly}
\begin{enumerate}[(i)]
\item\label{Poly L 3.4} Let $m=1,2$. For any $\ga\in[\ga_g,3]$, $m[(3-\frac{209\ga}{729}-\frac{152\ga^2}{729})z_M+\frac{2888\ga-9044}{19683}]-\frac{703}{19683}<0.$

\item\label{r(z0)<0} For any $\ga\in(1,\gau)$, $1953125r(z_0)= -250\ga^7+5050\ga^6-51495\ga^5+266711\ga^4-673057\ga^3+1369003\ga^2-1769830\ga+771743<0$.

\item\label{r(z0(1.59))<0} Let $m=1$. For any $\ga\in[\ga_u,2)$, $7812500000r(z_0(\frac{159}{100})) =22188041\ga^4-451420037\ga^3+1178808488\ga^2-7162470820\ga+9959809328<0$.

\item\label{r(z0(1.55))<0} Let $m=2$. For any $\ga\in[\ga_u,2)$, $62500000r(z_0(\frac{31}{20}))=3[61731\ga^4-1244367\ga^3+3215408\ga^2-19360620\ga+26076848]<0$.
\item\label{L 3.7-1}
For each $\ga\in(1,2]$, $15625 q(z_0(2))=2592\ga^4-37368\ga^3+341118\ga^2-1143750\ga+828125<0$

\item\label{L 3.7-2}
For each $\ga\in(1,2]$, $q(1)=18\ga^4-90\ga^3+123\ga^2-141\ga+53<0.$

\item\label{L 3.13-1}
For each $\ga\in(1,2]$, $-24(\ga-2)^2z_0^2+(92+54\ga)z_0-21=\frac{-600\ga^4+7680\ga^3-68886\ga^2+158584\ga-121589}{15625}<0.$

\item\label{L 3.13-2}
For each $\ga\in(1,2]$, $15625p(z_0)=450\ga^4-5760\ga^3+48852\ga^2-89063\ga+49348>0.$

\item\label{L 3.13 p3}
For each $\ga\in(1,\gau)$, $g(\frac{3}{20})=\frac{81\ga^4-1119\ga^3+3476\ga^2-3248\ga+1048}{400}>0.$

\item\label{A-1}
For each $\ga\in[\ga_g,3]$, $\ga^3+19\ga^2-100\ga+100<0.$

\item\label{L 3.17 p1}
For each $\ga\in(1,2]$, $36\ga^4-180\ga^3+167\ga^2+405\ga-414>0.$

\item\label{L 3.17 p2}
For each $\ga\in(1,2]$, $A(\ga,\frac{1}{5}) =\frac{ 64\ga^7-352\ga^6-292\ga^5+4270\ga^4-3575\ga^3-11948\ga^2+15074\ga+4729}{3125}>0.$

\item\label{q(z0)<0}
For any $\ga\in(1,\gau)$, $(20-11\ga)z_0+5-2\sqrt 3-7w(z_0)<0$.

\item\label{p(2(ga-1)/11)<0}
For any $\ga\in(1,\gau)$, $p(\frac{2(\ga-1)}{11})<0$ for $p(z)$ defined in \eqref{p(z)}. 
\item\label{L A.2 -1}
For any $\ga\in(1,2]$, $ 25\ga^5-245\ga^4-546\ga^3+5016\ga^2-15625\ga+15625 $ has only one real root, $\gau\in(\frac{79}{50},\frac{159}{100})$. 
\item\label{L A.2 -2}
For any $\ga\in(1,2]$, $450\ga^7 -4860\ga^6 +6432\ga^5 +39111\ga^4-207817\ga^3+359749\ga^2-275503\ga+110250$ has only one real root, $\gau\in(\frac{77}{50},\frac{31}{20})$. 
\item\label{L A.3-1}
For any $\ga\in(1,\gau]$, $-125\ga^7+2525\ga^6-27310\ga^5+154918\ga^4-487091\ga^3+1084814\ga^2-1166290\ga+217809<0$.

\item\label{L A.3-2}
For any $\ga\in[\gau,2]$, $2744\ga^4-68208\ga^3+317142\ga^2-880880\ga+760577<0$.

\item\label{L A.4}
For any $\ga\in(1,2]$, $100\ga^4-1280\ga^3+10856\ga^2-23264\ga+10619<0$.

\item\label{L A.5}
For any $\ga\in(1,\gau)$, $25\ga^4-320\ga^3+2714\ga^2-5816\ga+4061>0$.
\end{enumerate}
\end{proposition}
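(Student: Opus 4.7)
The plan is to treat the assertions (i)--(xx) uniformly. Each is a sign condition on a polynomial (or, after rationalization, a polynomial) in the single variable $\ga$ on an explicit bounded interval, so every one of them falls within the scope of Sturm's Theorem (Theorem~\ref{Sturm's theorem}), stated immediately before the proposition. The standard algorithm I would invoke is: for a polynomial $p(\ga)$ and interval $(a,b)$, compute the Sturm chain of $p$, count the variations of sign at $\ga=a$ and $\ga=b$, and subtract to determine the exact number of roots of $p$ in $(a,b)$; if that count is zero, the sign of $p$ is fixed throughout $(a,b)$ and determined by a single evaluation at an interior point; if the count is positive, the interval is subdivided at the located roots and the sign verified on each resulting subinterval.

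For the items that are already polynomial inequalities in $\ga$ on an explicit interval---namely (ii), (iii), (iv), (v), (vi), (vii), (viii), (ix), (x), (xi), (xii), and (xvii)--(xx)---the procedure applies directly after one substitutes the closed form $z_0(\ga)=(22-5\ga)/125$. For items whose interval involves the threshold $\gau$, Lemma~\ref{z0=zg} already supplies rational brackets ($\gau\in(79/50,159/100)$ for $m=1$ and $\gau\in(77/50,31/20)$ for $m=2$), so the interval may be replaced by an explicit rational one before invoking Sturm. Items (xv) and (xvi) assert the uniqueness of a real root of a polynomial in $(1,2]$; here the Sturm chain yields exactly one sign-variation drop across $(1,2]$, and the stated rational localization of $\gau$ follows from further bisection of the interval.

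A handful of items involve irrationals and require one round of preprocessing before Sturm applies. Item (i) contains $z_M(\ga)=(\sqrt 2+\sqrt\ga)^{-2}$; multiplying through by $(\sqrt 2+\sqrt\ga)^{2}>0$ produces an expression affine in $\sqrt{2\ga}$, which one isolates and squares to obtain a polynomial in $\ga$ whose sign is then confirmed by Sturm on $[\ga_g,3]$. Items (xiii) and (xiv) involve $w(z_0)$ (the square root of a rational function of $\ga$) and, in (xiii), also $\sqrt 3$: the procedure is to isolate the square roots on one side, verify via an auxiliary Sturm check that both sides have definite signs so that squaring is sign-preserving, and then reduce to a polynomial inequality in $\ga$ to which Sturm applies once more.

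The main obstacle is not conceptual but purely computational: for the degree-seven polynomials appearing in (ii), (xvi) and (xvii), the Sturm chain produces intermediate remainders with extremely large integer coefficients, so that in practice the verification is most cleanly performed with the assistance of computer algebra. By hand each individual item remains routine but labour-intensive, and no new analytic ingredient beyond the squaring-and-Sturm reduction described above is required.
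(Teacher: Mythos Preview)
Your approach is essentially the same as the paper's: the paper's proof states that every item follows directly from Sturm's Theorem after tedious calculation, and only singles out item~(xiii) for the explicit rationalize-and-square reduction that you describe. One small slip: item~(xiv) does not involve $w(z_0)$---substituting $z=\tfrac{2(\ga-1)}{11}$ into the polynomial $p(z)$ of~\eqref{p(z)} yields a plain polynomial in $\ga$---so only~(i) and~(xiii) require the preprocessing step; also note that the rational brackets for $\gau$ are established within items~(xv)--(xvi) themselves (Lemma~\ref{z0=zg} cites these), so you should prove those two first to avoid any appearance of circularity.
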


\begin{proof}
Each of these statements is proved directly from Theorem~\ref{Sturm's theorem} after tedious calculation, with the exception of~\eqref{q(z0)<0}. For this inequality, direct computation shows that
	\begin{align*}
		&(20-11\ga)z_0+5-2\sqrt 3-7w(z_0)\\ &= \frac{55\ga^2-342\ga-250\sqrt{3}+1065 - 
			7 \sqrt{    25 \ga^4- 320 \ga^3+ 2714 \ga^2- 5816 \ga+6561 }}{125}\\
			&= -\frac{4}{125}\frac{-450\ga^4+ 5485\ga^3+ (-25282 + 6875\sqrt{3}) \ga^2+ (110869 - 42750 \sqrt{3}) \ga+3 (-83353 + 44375 \sqrt{3})}{55\ga^2-342\ga-250\sqrt{3}+1065 + 7 \sqrt{ 25 \ga^4- 320 \ga^3+ 2714 \ga^2- 5816 \ga+6561}}\\
			&<0,
	\end{align*}
	where we used, for any $\ga\in(1,2]$, $55\ga^2-342\ga-250\sqrt{3}+1065>0$ and the positivity of the numerator, which can be shown by Theorem \ref{Sturm's theorem}, to conclude the last inequality. 
\end{proof}

\end{appendices}

\end{document}